\date{\today}
\numberwithin{equation}{section}
\newcommand{\dv}{\mathrm{div}\,}
\newtheorem{Theorem}{Theorem}[section]
\newtheorem{Lemma}{Lemma}[section]
\newtheorem{Proposition}{Proposition}[section]
\theoremstyle{definition}
\newtheorem{Remark}{Remark}[section]
\begin{document}
\title[ Navier-Stokes-Vlasov-Fokker-Planck Equations]
 {Strong solutions to the compressible Navier-Stokes-Vlasov-Fokker-Planck equations:
 Global existence  near the equilibrium  and large time behavior}

 \author[F.-C. Li]{Fucai Li}
\address{Department of Mathematics, Nanjing University, Nanjing
 210093, P. R. China}
 \email{fli@nju.edu.cn}

  \author[Y.-M. Mu ]{Yanmin Mu$^*$}
\address{School of Applied Mathematics, Nanjing University of Finance \& Economics, Nanjing
 210046, P. R. China}
 \email{yminmu@126.com}
\thanks{$^*$Corresponding author}

 \author[D.-H. Wang]{Dehua Wang}
\address{Department of Mathematics,   University of Pittsburgh,  Pittsburgh, PA 15260}
 \email{dwang@math.pitt.edu}

\begin{abstract}
A kinetic-fluid model describing the evolutions of disperse two-phase flows is considered.
The model  consists of the Vlasov-Fokker-Planck equation for the particles (disperse phase) coupled
 with the compressible Navier-Stokes equations for the fluid (fluid phase) through the friction force.
The friction force depends on the density,  which is different from many previous studies on
 kinetic-fluid models and is more physical in modeling but significantly more difficult in analysis.
 New approach and techniques are introduced to deal with the strong coupling of the fluid and the particles.
 The global well-posedness of strong
 solution in the three-dimensional whole space  is established
 when the initial data is a small perturbation of  some given equilibrium. Moreover, the algebraic rate of convergence of
 solution toward the equilibrium state is obtained.
 For the  periodic domain the same global well-posedness result still holds
  while the convergence rate is  exponential.

 \end{abstract}

\keywords{Two-phase flows, compressible Navier-Stokes equations, Vlasov-Fokker-Planck equation,
 global well-posedness, rate of convergence}
\subjclass[2000]{35Q30, 76D03, 76D05, 76D07}

\maketitle


\section{Introduction}

\subsection{The model}

Kinetic-fluid models are widely used in the description of the dynamics of  disperse two-phase flows.
In these two-phase flows, the disperse phase is usually considered from the statistical point of view (kinetic equation) while the dense phase is from
the hydrodynamic one (fluid equations).  The kinetic equation is  coupled with the fluid equations through the friction force.
Typical applications of two-phase flows include  the dynamics of sprays
\cite{CBG,BBJM}, diesel engines \cite{FAW,FAWa,RM1,RM2},   pollution  settling processes \cite{BWC}, rain formation \cite{FFS},
wastewater treatment \cite{BBT},
  biomedical flows \cite{BBJM}, combustion theory \cite{FAW}, and so on.

In this paper we are concerned with  the following system of partial differential equations (see \cite{CGL}) of fluid-particle flows:
\begin{align}
&\partial_t F+v\cdot\nabla_{x}F=n\nabla_{v}\cdot[(v-u)F+\nabla_{v}F],\label{v1.1}\\
&\partial_t n +\nabla\cdot(n u)=0,\label{v1.2}\\
&\partial_t(n u)+\nabla\cdot(n u\otimes u)-\Delta
u+\nabla p=n\int_{\mathbb{R}^{3}}(v-u)F\,  {\rm d}v, \label{v1.3}
\end{align}
with the initial data
\begin{align}
(F,n,u)|_{t=0}=(F_{0}(x, v),n_{0}(x), u_{0}(x)). \label{v1.4}
\end{align}
Here, the unknowns  are $F=F(t,x,v)\geq 0$ for $(t,x,v)\in \mathbb{R}^+\times \Omega\times \mathbb{R}^{3}$,
denoting the density distribution
function of particles in the phase space;  and $n=n(t,x)\geq 0, u=u(t,x)\in \mathbb{R}^{3} $
for $(t,x)\in \mathbb{R}^+\times \Omega$, denoting the mass density and
 the velocity field respectively. The pressure function  $p$   depends only on
 $n$ satisfying $p'(\cdot)> 0$. In our present work, we  take $p(n)=c_0 n^{\gamma}$ with the constants $\gamma\geq 1$ and $c_0>0$ for simplicity.  The spatial domain is
 $\Omega=\mathbb{R}^{3}$ or $\mathbb{T}^{3}$ (a periodic domain in $\mathbb{R}^3$). Compared with the model introduced in \cite{CGL},
 here we have normalized the physical constants to be one
 for simplicity and added the viscous term $-\Delta u$ in the momentum equation
 \eqref{v1.3}.

It is easy to check that for smooth solutions of the compressible Navier-Stokes-Vlasov-Fokker-Planck system \eqref{v1.1}-\eqref{v1.3}, the following  quantities are conserved/ dissipated:\\
$\bullet$\; mass conservation:
$$\frac{\rm d}{{\rm d}t}\iint_{\Omega \times \mathbb{R}^3} F \, {\rm d}x  {\rm d}v=0,
\quad \frac{\rm d}{{\rm d}t}\int_{\Omega } n \, {\rm d}x =0,$$
$\bullet$\; momentum conservation:
$$\frac{\rm d}{{\rm d}t}\Big\{\int_{\Omega} n u\, {\rm d}x  +\iint_{\Omega \times\mathbb{R}^3} v F\, {\rm d}x  {\rm d}v\Big\}=0,$$
$\bullet$\;  and energy/entropy dissipation:
\begin{align}
\frac{\rm d}{{\rm d}t}\int_{\Omega}\bigg\{n\Big(\frac{|u|^{2}}{2}+A\Big)&+\int_{\mathbb{R}^3}\Big( F\ln F+\frac{|v|^{2}}{2}F\Big) {\rm d}v\bigg\}\, {\rm d}x
 +\int_{\Omega} |\nabla u|^{2}\, {\rm d}x  \nonumber\\
&\qquad\qquad =-\iint_{\Omega \times \mathbb{R}^3} n\frac{|(v-u)F-\nabla_{v}F|^{2}}{F}\, {\rm d}x {\rm d}v   \label{energy}
\end{align}
with $A=\int^{n}\frac{p(\eta)}{\eta^{2}}{\rm d}\eta$.

Set $$M=M(v)=\frac{1}{(2\pi)^{3/2}}\exp\Big\{-\frac{|v|^{2}}{2}\Big\}.$$
From the energy/entropy dissipation \eqref{energy}, we know that $(F,n,u)\equiv( M,  1,0) $
is an equilibrium of the system \eqref{v1.1}-\eqref{v1.3}.
%
%
%
Thus it is natural to introduce the transforms
\begin{align}\label{perturb}
   F=M+\sqrt{M}f,\quad n=1+\rho
\end{align}
to rewrite the system  \eqref{v1.1}-\eqref{v1.3} as
\begin{align}
&\partial_{t}f+v\cdot \nabla_{x}f +u\cdot \nabla_{v}f-\frac{1}{2}u\cdot vf-u\cdot v\sqrt{M}\nonumber\\
 &\qquad \quad\quad\quad=\mathcal{L}f +\rho\Big(\mathcal{L}f -u\cdot \nabla_{v}f+\frac{1}{2}u\cdot vf+u\cdot v\sqrt{M}\Big),\label{v1.5}\\
 &\partial_{t}\rho+u\cdot \nabla \rho+(1+\rho)\dv u=0,\label{v1.6}\\
 &\partial_{t}u+u\cdot \nabla u+\frac{p'(1+\rho)}{1+\rho}\nabla\rho=\frac{1}{1+\rho}\Delta u-u(1+a)+b.\label{v1.7}
\end{align}
Correspondingly, the   initial data \eqref{v1.4} becomes
\begin{align}
(f,n,u)|_{t=0}=(f_0(x,v), \rho_0(x),u_{0}(x))= \Big(\frac{F_{0}-M}{\sqrt{M}}, n_0(x)-1,u_{0}(x)\Big).\label{v1.8}
\end{align}
In \eqref{v1.5}-\eqref{v1.7}, $\mathcal{L}$ is the linearized Fokker-Planck operator defined by
$$\mathcal{L}f =\frac{1}{\sqrt{M}}\nabla_{v}\cdot\Big[M\nabla_{v}\big(\frac{f}{\sqrt{M}}\big)\Big],$$
and $a=a^{f},b=b^{f}$, depending on $f$, are the moments of $f$ defined by
\begin{align}
a^{f}(t,x)=\int_{\mathbb{R}^{3}}\sqrt{M}f(t,x,v) \,{\rm d}v,\quad b^{f}(t,x)=\int_{\mathbb{R}^{3}}v\sqrt{M}f(t,x,v) \,{\rm d}v. \nonumber
\end{align}

\smallskip
\subsection{Notations}

Let $\nu(v)=1+|v|^{2}$ and denote $|\cdot|_{\nu}$ by
$$|g|_{\nu}^{2}:=\int_{\mathbb{R}^{3}}\big\{|\nabla_{v}g(v)|^{2}+\nu(v)|g(v)|^{2}\big\} \,{\rm d}v,\quad g=g(v).$$
We use $\langle \cdot,\cdot\rangle   $ to denote the inner product over the Hilbert space $L^{2}_{v}$, i.e.,
$$\langle g,h\rangle   :=\int_{\mathbf{R}^{3}}g(v)h(v) \,{\rm d}v,\quad g,h\in L^{2}_{v}.$$
For simplicity, we shall use $\|\cdot\|$ to denote the norm of  $L^{2}_{x}$ or $L^{2}_{x,v}$ when there is no confusion.
Define
$$\|g\|_{\nu}^{2}:=\iint_{\Omega\times\mathbb{R}^{3}}[|\nabla_{v}g(x,v)|^{2}+\nu(v)|g(x,v)|^{2}]\, {\rm d}x  {\rm d}v, \quad g=g(x,v).$$
For multi-indices $\alpha=(\alpha_{1},\alpha_{2},\alpha_{3})$ and $\beta=(\beta_{1},\beta_{2},\beta_{3})$, we denote by
$$\partial^{\alpha}_{\beta}\equiv \partial^{\alpha_{1}}_{x_{1}} \partial^{\alpha_{2}}_{x_{2}} \partial^{\alpha_{3}}_{x_{3}}
 \partial^{\beta_{1}}_{v_{1}}\partial^{\beta_{2}}_{v_{2}}\partial^{\beta_{3}}_{v_{3}}$$
 the partial derivatives with respect to $x=(x_1,x_2,x_3)$ and $v=(v_1,v_2,v_3)$.
 The length of $\alpha$ and $\beta$ are defined as $|\alpha|=\alpha_{1}+\alpha_{2}+\alpha_{3}$ and $ |\beta|=\beta_{1}+\beta_{2}+\beta_{3}$.
 We shall use the following norms:
 \begin{align}
    \|g\|_{H^s}:=\sum_{|\alpha|\leq s}\|\partial^\alpha g\|, \quad
    \|g\|_{H^s_{x,v}}:=\sum_{|\alpha|+|\beta|\leq s}\|\partial^\alpha_\beta g\|. \nonumber
 \end{align}

For  $g(t,x,v)$,
we decompose it as the sum of the fluid part $\mathbf{P}g$ and the particle part $\{\mathbf{I}-\mathbf{P}\}g$\,:
\begin{align}
   g=\mathbf{P}g+\{\mathbf{I}-\mathbf{P}\}g. \label{decomp}
\end{align}
Here the projection operator $\mathbf{P}$ is defined as follows:
$$\mathbf{P}: \  L^{2}\rightarrow \text{Span}\, \big\{\sqrt{M},v_{1}\sqrt{M},v_{2}\sqrt{M},v_{3}\sqrt{M}\big\},$$
and  \begin{align}
\mathbf{P}:=\mathbf{P}_{0}\oplus\mathbf{P}_{1}, \quad  \mathbf{P}_{0}f:&=a^{f}\sqrt{M}, \quad
\mathbf{P}_{1}f:=b^{f}\cdot v \sqrt{M}.\nonumber
\end{align}
This macro-micro decomposition is initiated by Guo \cite{YG} for the Boltzmann equation and later introduced in \cite{DFT} to study the Fokker-Planck type equations.
Notice that the operator $\mathcal{L}$ satisfies
$$-\int_{\mathbb{R}^{3}}g\mathcal{L}g\, {\rm d}v\geq \lambda_{0}|\{\mathbf{I}-\mathbf{P}_{0}\}g|^{2}_{\nu} , \quad \forall\, g=g(v),$$
for  some positive constant $\lambda_{0}>0$.
Meanwhile, $\mathcal{L}g $ can be computed as
$$\mathcal{L}g =\mathcal{L}\{\mathbf{I}-\mathbf{P}\}g+\mathcal{L}\mathbf{P}g=\mathcal{L}\{\mathbf{I}-\mathbf{P}\}g-\mathbf{P}_{1}g. $$
Therefore, we have
\begin{align}
\langle -\mathcal{L}\{\mathbf{I}-\mathbf{P}\}g, g\rangle \geq \lambda_{0}|\{\mathbf{I}-\mathbf{P}\}g|^{2}_{\nu},\quad
\langle -\mathcal{L}g, g \rangle    \geq \lambda_{0}|\{\mathbf{I}-\mathbf{P}\}g|^{2}_{\nu}+|b^{g}|^{2}. \label{v2.1}
\end{align}

For brevity, we define the temporal energy functional and the corresponding dissipation rate for $\big(f(t,x,v),\rho(t,x),u(t,x)\big)$ as the following:

\begin{align}
\mathcal{E}_{0}(t):=\,&\sum_{|\alpha|\leq 3}\sum_{i,j}\int_{\mathbb{R}^{3}}\partial^{\alpha}_{x}
(\partial_{x_{j}}b_{i}+\partial_{_{x_{i}}}b_{j})\partial^{\alpha}_{x}\Gamma_{i,j}(\{\mathbf{I}-\mathbf{P}\}f)\, {\rm d}x \nonumber\\
&-\sum_{|\alpha|\leq 3}\int_{\mathbb{R}^{3}}\partial^{\alpha}_{x}a\partial^{\alpha}_{x}\nabla_{x}\cdot b \, {\rm d}x, \label{v2.16}\\
\mathcal{E}_{1}(t):=\,&\|f\|^{2} +\|\rho\|^{2}+\|u\|^{2}+\sum_{1\leq |\alpha|\leq 4}\bigg\{\|\partial^{\alpha}f\|^{2}
 +\Big\|\frac{\sqrt{p'(1+\rho)}}{1+\rho}\partial^{\alpha}\rho\Big\|^{2}
+\|\partial^{\alpha}u\|^{2}\bigg\}\nonumber\\
&+\tau_{1}\mathcal{E}_{0}(t)+\tau_{2}\sum_{|\alpha|\leq 3}\int_{\mathbb{R}^{3}}\partial^{\alpha}u\cdot\partial^{\alpha}\nabla \rho \, {\rm d}x  ,\label{vz1}\\
\mathcal{D}_{1}(t):=\,&\|\nabla (a,b,\rho,u)\|^{2}_{H^{3}}+\|b-u\|^{2}_{H^{4}}\nonumber\\
&+\sum_{|\alpha|\leq 4}\big(\|\partial^{\alpha}\{\mathbf{I}-\mathbf{P}\}f\|^{2}_{\nu}+\|\partial^{\alpha}\nabla u\|^{2}\big),\label{vz2}\\
\mathcal{E}_{2}(t) :=\,&\sum_{1\leq k\leq4}C_{k}\sum_{\substack{|\beta|=k \\ |\alpha|+|\beta|\leq 4}}\|\partial^{\alpha}_{\beta}\{\mathbf{I}-\mathbf{P}\}f\|^{2},\label{vz3}\\
\mathcal{D}_{2}(t) :=\,&\sum_{\substack{1\leq |\beta|\leq 4 \\ |\alpha|+|\beta|\leq 4}}\|\partial^{\alpha}_{\beta}\{\mathbf{I}-\mathbf{P}\}f\|^{2}_{\nu},\label{vz4}\\
\mathcal{E}(t) :=\,&\mathcal{E}_{1}(t)+\tau_{3}\mathcal{E}_{2}(t),\label{vz5}\\
\mathcal{D}(t) :=\,&\mathcal{D}_{1}(t)+\tau_{3}\mathcal{D}_{2}(t),\label{vz6}
\end{align}
where $\tau_{1},\tau_{2},\tau_{3}, C_{k}(1\leq k\leq 4)$ are suitable constants to be chosen later.
In addition, in torus we know the Poincar\'{e} inequality is true, thus, the total  dissipation rate is slightly different from $\mathcal{D}(t)$. We note that
\begin{align}
\mathcal{D}_{\mathbb{T},1}(t) :=\,&\mathcal{D}_{1}(t)+\tau_{4}(\|a\|^{2}_{L^{2}}+\|\rho\|^{2}_{L^{2}})+\tau_{5}\|b+u\|^{2}_{L^{2}},\nonumber\\
 \mathcal{D}_{\mathbb{T}}(t):=\,&\mathcal{D}_{\mathbb{T},1}(t)+\tau_{3}\mathcal{D}_{2}(t),\nonumber
\end{align}
where $\tau_{4}\,\text{and}\,\tau_{5}$ are sufficiently small to be chosen later.
Throughout  this paper the letter $C$ denotes a positive (generally large) constant and
 $\lambda$ a positive (generally small) canstant, where both $C$ and $\lambda$
 may change from line to line.  The symbol $A\sim B$ means
 $\frac{1}{C} A\leq B\leq C A$ for some constant $C>0$.

\subsection{Main results}
Our aim is to establish the global well-posedness and large-time behavior of strong solutions when the norm of the initial data $\|f_{0}\|_{H^{4}_{x,v}}+\|(\rho_{0},u_{0})\|_{H^{4}}$ is sufficiently small   (near the equilibrium).  We also obtain  the different time-decay rates   depending on the spatial domain $\mathbb{R}^{3}$  or $\mathbb{T}^{3}$. We now state  the   main results as follows.

 \begin{Theorem}\label{vt1.1}
 Let $\Omega=\mathbb{R}^{3}$ and $(f_0,\rho_0,u_0)$ be the initial data. Suppose that
$F_{0}=M+\sqrt{M}f_{0}\geq 0,$ and there exists a constant $\epsilon_{0}>0$ such that
 $\|f_{0}\|_{H^{4}_{x,v}}+\|(\rho_{0},u_{0})\|_{H^{4}}<\epsilon_{0}$.
 Then, the Cauchy problem \eqref{v1.5}-\eqref{v1.8} admits a unique global solution
 $(f,\rho,u)$ satisfying  $F=M+\sqrt{M}f\geq 0$ and
 \begin{gather*}
f\in C([0,\infty);H^{4}(\mathbb{R}^{3}\times\mathbb{R}^{3}));
\quad \rho, \, u\in C([0,\infty);H^{4}(\mathbb{R}^{3})); \label{v1.9} \\
 \sup_{t\geq 0}(\|f(t)\|_{H^{4}_{x,v}}+\|(\rho,u)(t)\|_{H^{4}})\leq C\big(\|f_{0}\|_{H^{4}_{x,v}}+\|(\rho_{0},u_{0})\|_{H^{4}}\big),\label{v1.10}
\end{gather*}
for some constant $C>0$.
Moreover, if we further assume that
\begin{align}\label{intt}
   \|f_{t}(0)\|^{2}_{H^{3}_{x,v}}+\|\rho_{t}(0)\|^{2}_{H^{3}}+\|u_{t}(0)\|^{2}_{H^{3}}< +\infty,
\end{align}then
\begin{align}
\sup_{x\in \mathbb{R}^{3}}\bigg\{\sum_{|\alpha|+|\beta|\leq 1}\|\partial^{\alpha}_{\beta}f\|^{2}_{L^{2}_{v}}\bigg\}
+\|\rho\|^{2}_{L^\infty}+\|u\|^{2}_{L^{\infty}}\leq C(1+t)^{-\frac{1}{2}}\label{v1.11}
\end{align}
for some constant $C>0$ and all $t\geq 0.$
 \end{Theorem}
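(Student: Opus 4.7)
The plan is to execute a classical three-step program: local existence, a global uniform a priori energy estimate, and a time-weighted decay estimate. First I would construct a local strong solution in $C([0,T^\ast];H^4_{x,v}\times H^4\times H^4)$ by iterating between the linearised Vlasov-Fokker-Planck equation \eqref{v1.5} (coercivity of $\mathcal{L}$ from \eqref{v2.1}) and the linearised compressible Navier-Stokes system \eqref{v1.6}-\eqref{v1.7} (parabolic smoothing from $-\Delta u$), with positivity $F=M+\sqrt{M}f\geq 0$ propagated by a standard maximum-principle argument on the characteristic form of \eqref{v1.1}.

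The core of the proof is the differential inequality
$$\frac{d}{dt}\mathcal{E}(t)+\lambda\mathcal{D}(t)\leq 0,$$
valid as long as $\sqrt{\mathcal{E}(t)}\ll 1$. I would assemble it in layers matching the definition of $\mathcal{E},\mathcal{D}$. (a) Apply $\partial^\alpha$ for $|\alpha|\leq 4$ to \eqref{v1.5}-\eqref{v1.7}, take $L^2$ inner products and use the coercivity of $\mathcal{L}$ together with the viscous energy $\int|\nabla u|^2$ to obtain dissipation on $\{\mathbf{I}-\mathbf{P}\}f$, on $b-u$ (from the linear friction $b-u(1+a)$), and on $\nabla u$. (b) Recover dissipation on $\|\nabla \rho\|_{H^3}$ through the Matsumura-Nishida cross term $\tau_2\sum_{|\alpha|\leq 3}\int\partial^\alpha u\cdot\partial^\alpha\nabla \rho\,dx$, exploiting the symmetrised pressure gradient $p'(1)\nabla \rho$ in \eqref{v1.7}. (c) Recover dissipation on $\|\nabla(a,b)\|_{H^3}$ through the macro-micro cross term $\mathcal{E}_0$, built from the conservation-law structure obtained by taking the velocity moments of \eqref{v1.5}. (d) Control the mixed $v$-derivatives $\partial^\alpha_\beta\{\mathbf{I}-\mathbf{P}\}f$ with $|\beta|\geq 1$ through $\mathcal{E}_2$ and $\mathcal{D}_2$, choosing the weights $C_k$ inductively in $k=|\beta|$ so that the transport loss $[v\cdot\nabla_x,\partial^\beta_v]$ (which costs one $x$-derivative) is absorbed by the neighbouring $|\beta|$-layer. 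Nonlinearities are closed by Moser-type product and commutator estimates together with the smallness of $\mathcal{E}$, and the free parameters $\tau_1,\tau_2,\tau_3,C_k$ are fixed so that $\mathcal{E}(t)\sim\|f\|^2_{H^4_{x,v}}+\|(\rho,u)\|^2_{H^4}$. Coupled with the local theorem, a standard continuation argument yields the global strong solution and the uniform $H^4$ bound stated in the theorem.

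For the algebraic decay, the hypothesis \eqref{intt} is propagated to a uniform-in-time bound on $(f_t,\rho_t,u_t)$ in $H^3_{x,v}\times H^3\times H^3$ by repeating the energy method on the $t$-differentiated system. The equations \eqref{v1.5}-\eqref{v1.7} then allow the low-order quantities $\|(\rho,u,a,b)\|_{L^2}$ (which lie in $\mathcal{E}$ but not in $\mathcal{D}$) to be traded against $\partial_t$-norms plus pieces already controlled by $\mathcal{D}$. Combined with a Gagliardo-Nirenberg interpolation in $\mathbb R^3$, this delivers a closed differential inequality of the form $\mathcal{E}(t)^{3}\leq C\mathcal{D}(t)$, which integrates to $\mathcal{E}(t)\leq C(1+t)^{-1/2}$. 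The Sobolev embedding $H^2\hookrightarrow L^\infty(\mathbb R^3)$, applied in $x$ to $\rho,u$ and uniformly in $v$ to the components $\partial^\alpha_\beta f$ with $|\alpha|+|\beta|\leq 1$, then converts this into the pointwise bound \eqref{v1.11}.

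The main technical obstacle is the $n$-dependent friction, emphasised by the authors as the new difficulty compared with previous works where $n\equiv 1$ is assumed in the coupling. This coupling creates genuinely nonlinear cross-products between the fluid variable $\rho$ and the kinetic variable $f$, such as $\rho\,\mathcal{L}f$, $\rho u\cdot v\sqrt{M}$ and $\rho u\cdot vf$, which cannot be eliminated by linearisation. Controlling these cubic and higher terms at the $H^4$ level while simultaneously preserving the equivalence $\mathcal{E}\sim\|f\|_{H^4_{x,v}}^2+\|(\rho,u)\|_{H^4}^2$ forces the simultaneous use of the macro-micro cross term $\mathcal{E}_0$, the Matsumura-Nishida term, and the weighted $v$-derivative sum $\mathcal{E}_2$, together with a careful tuning of $\tau_1,\tau_2,\tau_3$ and the $C_k$; this is the delicate computational core of the whole argument.
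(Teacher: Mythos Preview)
Your global-existence program (the first two paragraphs) is correct and coincides with the paper's: iteration for local existence with positivity via the maximum principle, then the layered energy $\mathcal{E}=\mathcal{E}_1+\tau_3\mathcal{E}_2$ assembled from (a) the direct $H^4$ estimates, (b) the Matsumura--Nishida cross term $\tau_2\sum_{|\alpha|\le 3}\int\partial^\alpha u\cdot\partial^\alpha\nabla\rho$, (c) the macro--micro functional $\mathcal{E}_0$ for $\nabla(a,b)$, and (d) the inductive treatment of the mixed $v$-derivative block $\mathcal{E}_2$. The resulting differential inequality and the continuation argument are exactly those of Sections~2 and~4.

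The decay argument, however, contains a genuine gap. Your step ``trade the low-order quantities $\|(\rho,u,a,b)\|_{L^2}$ against $\partial_t$-norms plus pieces controlled by $\mathcal{D}$, then interpolate to get $\mathcal{E}^3\le C\mathcal{D}$'' does not work in $\mathbb{R}^3$. The equations only give the \emph{forward} bounds $\|(\rho_t,u_t,a_t)\|_{L^2}\le C\sqrt{\mathcal{D}}$ (for instance $\rho_t=-u\cdot\nabla\rho-(1+\rho)\,\dv u$), never the reverse; a uniform bound on $\|(f_t,\rho_t,u_t)\|_{H^3}$ therefore adds nothing towards controlling $\|(\rho,u,a)\|_{L^2}$. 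There is no Poincar\'e inequality on $\mathbb{R}^3$, and Gagliardo--Nirenberg interpolates intermediate norms, not the lowest one. Hence $\mathcal{E}^3\le C\mathcal{D}$ fails for the full $\mathcal{E}$ as defined in \eqref{vz5}, and the ODE route to $(1+t)^{-1/2}$ collapses.

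The paper's mechanism is different and is the missing ingredient. After deriving the time-differentiated energy inequality $\frac{d}{dt}\bar{\mathcal{E}}+\lambda\bar{\mathcal{D}}\le C(1+\mathcal{E}^2)\mathcal{D}\,\bar{\mathcal{E}}$ for $\bar{\mathcal{E}}\sim\|(f_t,\rho_t,u_t)\|^2_{H^3}$, the paper first shows $\int_0^\infty\bar{\mathcal{E}}\,dt<\infty$ (because $\|(f_t,\rho_t,u_t)\|^2_{L^2}\le C\mathcal{D}$ by the equations, $\int\mathcal{D}<\infty$, and $\int\bar{\mathcal{D}}<\infty$ by Gronwall). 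It then invokes the Deckelnick-type lemma (Lemma~\ref{vl3.1}): if $y\ge 0$, $\int_0^\infty y<\infty$ and $y'\le a(t)y$ with $\int a<\infty$, then $y(t)\le C/(1+t)$. This yields actual \emph{decay} $\bar{\mathcal{E}}(t)\le C(1+t)^{-1}$, not merely a uniform bound. The conclusion is then obtained from the chain-rule estimate on the $H^3$-level functional $\tilde{\mathcal{E}}$,
\[
\tilde{\mathcal{D}}(t)\ \le\ -\frac{d}{dt}\tilde{\mathcal{E}}(t)\ \le\ C\,\tilde{\mathcal{E}}(t)^{1/2}\,\bar{\mathcal{E}}(t)^{1/2}\ \le\ C(1+t)^{-1/2},
\]
and the $L^\infty$ quantities in \eqref{v1.11} sit inside $\tilde{\mathcal{D}}$ via $\|g\|_{L^\infty}\lesssim\|\nabla g\|_{H^1}$. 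Note that the paper never asserts decay of $\mathcal{E}$ itself; only the dissipation $\tilde{\mathcal{D}}$ is shown to decay, which is precisely enough for the theorem.
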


 \begin{Remark}
In the assumption \eqref{intt}, $f_t(0)$ is indeed defined through the Vlasov-Fokker-Planck  equation  \eqref{v1.5} as follows:
\begin{align*}
f_{t}(0) := & -v\cdot \nabla_{x}f_0 -u_0\cdot \nabla_{v}f_0+\frac{1}{2}u_0\cdot vf_0+u_0\cdot v\sqrt{M}\nonumber\\
 &+\mathcal{L}f_0 +\rho\Big(\mathcal{L}f_0 -u_0\cdot \nabla_{v}f_0+\frac{1}{2}u_0\cdot vf_0+u_0\cdot v\sqrt{M}\Big);
 \end{align*}
 and $\rho_t(0)$ and $ u_t(0)$ are defined similarly. 
\end{Remark}

  \begin{Theorem}\label{vt1.2}
 Let $\Omega=\mathbb{T}^{3}$  and $(f_0,\rho_0,u_0)$ be the initial data.
 Suppose that  $F_{0}=M+\sqrt{M}f_{0}\geq 0,$  and there exists a constant $\epsilon_{0}>0$ such that
 $\|f_{0}\|_{H^{4}_{x,v}}+\|(\rho_{0},u_{0})\|_{H^{4}}<\epsilon_{0}$,
  and
 \begin{align}
 \int_{\mathbb{T}^{3}}a_{0}\, {\rm d}x =0, \quad  \int_{\mathbb{T}^{3}}\rho_{0}\, {\rm d}x =0, \quad \int_{\mathbb{T}^{3}}\left(b_{0}+(1+\rho_{0})u_{0}\right)\, {\rm d}x =0, \nonumber
 \end{align}
 where $a_0=\int_{\mathbb{T}^{3}}\sqrt{M}f_0(x,v) \,{\rm d}v$ and
 $b_0=\int_{\mathbb{T}^{3}}v\sqrt{M}f_0(x,v) \,{\rm d}v. $
Then, the Cauchy problem \eqref{v1.5}-\eqref{v1.8} admits a unique global solution
 $(f,\rho,u)$ satisfying $ F=M+\sqrt{M}f\geq 0$ and
 \begin{gather*}
f\in C([0,\infty);H^{4}(\mathbb{T}^{3}\times\mathbb{R}^{3}));\quad \rho, \, u\in C([0,\infty);H^{4}(\mathbb{T}^{3}));\nonumber\\
\|f(t)\|_{H^{4}_{x,v}}+\|(\rho,u)(t)\|_{H^{4}}
\leq C(\|f_{0}\|_{H^{4}_{x,v}}+\|\rho_{0},u_{0}\|_{H^{4}})e^{-\lambda t}, \label{v1.12}
\end{gather*}
for some constant $\lambda>0$ and any $t\geq 0.$
 \end{Theorem}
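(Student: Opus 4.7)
The plan is to reuse the energy framework developed for Theorem~\ref{vt1.1} and upgrade the polynomial decay to exponential decay by exploiting a Poincar\'e-type spectral gap on the torus, made possible by the three zero-mean conditions on the initial data. Local existence and the basic a priori inequality
\begin{align*}
\frac{d}{dt}\mathcal{E}(t)+\lambda\bigl(\mathcal{D}_{1}(t)+\tau_{3}\mathcal{D}_{2}(t)\bigr)\leq C\sqrt{\mathcal{E}(t)}\,\mathcal{D}(t)
\end{align*}
carry over verbatim from $\mathbb{R}^{3}$ to $\mathbb{T}^{3}$, since every step in that derivation is a local-in-$x$ energy estimate. Together with the smallness assumption on $\mathcal{E}(0)$ and a standard continuation argument, this yields a unique global solution with $\sup_{t\geq 0}\mathcal{E}(t)\leq C\mathcal{E}(0)$.

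The new ingredient is to propagate the three zero-mean conditions in time. Integrating \eqref{v1.2} over $\mathbb{T}^{3}$ gives $\tfrac{d}{dt}\int\rho\,dx=0$; integrating \eqref{v1.1} against $dv$ and then over $\mathbb{T}^{3}$, and using that the Fokker-Planck term is a $v$-divergence, gives $\tfrac{d}{dt}\int a\,dx=0$; and multiplying \eqref{v1.1} by $v$, integrating in $v$, adding the result to \eqref{v1.3}, and integrating in $x$ gives $\tfrac{d}{dt}\int[b+(1+\rho)u]\,dx=0$ because the friction terms cancel. Hence all three averages remain zero for every $t\geq 0$. Applying Poincar\'e to the first two yields $\|a\|_{L^{2}}+\|\rho\|_{L^{2}}\leq C\|\nabla(a,\rho)\|_{L^{2}}$, while writing $b+u=[b+(1+\rho)u]-\rho u$ and using $\|\rho u\|_{L^{1}\cap L^{2}}\leq C\sqrt{\mathcal{E}(t)}\|\nabla(\rho,u)\|_{L^{2}}$ yields
\begin{align*}
\|b+u\|_{L^{2}}^{2}\leq C\|\nabla(b,\rho,u)\|_{L^{2}}^{2}+C\sqrt{\mathcal{E}(t)}\,\mathcal{D}(t).
\end{align*}

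These three Poincar\'e-type bounds upgrade $\mathcal{D}_{1}(t)$ to the enhanced dissipation $\mathcal{D}_{\mathbb{T},1}(t)$ up to a higher-order nonlinear error. Combined with the identity $\mathbf{P}f=a\sqrt{M}+b\cdot v\sqrt{M}$, the pure-$v$ derivative dissipation already contained in $\mathcal{D}_{2}$, and the recovery of $\nabla(a,b)$ from $\nabla f$, one obtains $\mathcal{E}(t)\leq C\mathcal{D}_{\mathbb{T}}(t)+C\sqrt{\mathcal{E}(t)}\,\mathcal{D}(t)$. Substituting into the basic energy inequality and absorbing the nonlinear remainder by the smallness of $\mathcal{E}(t)$ produces
\begin{align*}
\frac{d}{dt}\mathcal{E}(t)+\lambda\mathcal{E}(t)\leq 0,
\end{align*}
from which Gr\"onwall delivers $\mathcal{E}(t)\leq\mathcal{E}(0)e^{-\lambda t}$, which by equivalence of norms gives the claimed $H^{4}$-decay. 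The main obstacle is the third conservation law: the naturally conserved momentum is $b+(1+\rho)u$, not $b+u$, so recovering $L^{2}$-control on $b+u$ requires absorbing the quadratic defect $\rho u$ into the dissipation without breaking the circle of inequalities. This can only be carried out because $\mathcal{E}(t)$ remains uniformly small and because $\nabla(\rho,u)$ is already present in $\mathcal{D}_{1}$ with full $H^{3}$-regularity.
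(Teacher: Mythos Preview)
Your proposal is correct and follows essentially the same route as the paper: global existence via the $\mathbb{R}^{3}$ energy machinery and continuation, propagation of the three conservation laws, Poincar\'e on $a$ and $\rho$, the splitting $b+u=[b+(1+\rho)u]-\rho u$ to handle the nonlinear defect in the momentum law, and then $\mathcal{E}(t)\lesssim\mathcal{D}_{\mathbb{T}}(t)$ plus Gr\"onwall for the exponential rate. The paper organizes the $\rho u$ defect slightly differently---it estimates $\|b+u+\rho u\|_{L^{2}}\leq C\|\nabla(b+u+\rho u)\|_{L^{2}}$ directly and then expands $\nabla(\rho u)$---but this is cosmetic and your version works equally well.
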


 We remark that in the papers \cite{CDM,DL,DFT} for the  related systems the time-decay rates  are optimal in the whole space case.
 Here our  time-decay rate $(1+t)^{-\frac{1}{2}}$ in whole space case for the solution  of  \eqref{v1.5}-\eqref{v1.7} is not optimal.
The main reason is that, due to the strong coupling of the nonlinear terms in \eqref{v1.5}-\eqref{v1.7},   the spectral  analysis cannot be carried out
directly. We thus present another way to obtain the decay rate.  In the periodic case, by the Poincar\'e inequality we  obtain the exponential decay 
which  coincides with
those in \cite{CDM,DL,DFT} in some sense. 

\subsection{Some known results on kinetic-fluid models}

There exist  many versions or variants of kinetic-fluid models, depending on the   physical regimes under consideration,
such as the compressibility of the fluid, viscosity of the fluid, species of particles,   interactions
between the fluid and particles,   motion of the particles, and so on. Below we review some kinetic-fluid models
related to our system \eqref{v1.1}-\eqref{v1.3}. We discuss the case that both the fluid and the particle phases are isothermal.
For the kinetic-fluid models  with energy exchange involved, we refer the reader to \cite{BBF,TSB}.
 Generally speaking, the kinetic-fluid models
can be divided into two categories: incompressible models and compressible models.

\subsubsection{Incompressible kinetic-fluid models}
If we assume that  the fluid is incompressible, we obtain the incompressible kinetic-fluid models.
The mathematical analysis of  incompressible  kinetic-fluid models  has received much attention recently.
In  \cite{KH}, Hamdache established global existence and large-time behavior of solutions
for the Vlasov-Stokes system. 
Boudin, Desvillettes, Grandmont and Moussa \cite{BDGM} proved the global existence of weak solutions to the incompressible Vlasov-Navier-Stokes system on a periodic domain.
Later, this result was extended to a bounded domain by Yu \cite{Yu}.
Goudon, He, Moussa and Zhang \cite{GHMZ} established the global existence of classical solutions near the equilibrium   for the  incompressible Navier-Stokes-Vlasov-Fokker-Planck system, meanwhile
 Carrillo, Duan and Moussa \cite{CDM} studied the corresponding inviscid case. Chae, Kang and Lee \cite{CKL2} obtained the global existence of weak and classical solutions
for the Navier-Stokes-Vlasov-Fokker-Planck equations in a torus.
Benjelloun, Desvillettes and Moussa \cite{BDM} obtained the existence of global weak
solutions to the incompressible Vlasov-Navier-Stokes system with  a fragmentation kernel.
Goudon, Jabin and Vasseur \cite{GJV1,GJV2}  investigated the hydrodynamic limits to the incompressible Vlasov-Navier-Stokes system
by means of some scaling  and convergence methods.

Assume that the fluid is incompressible and  inhomogeneous, Wang and Yu \cite{WY} obtained the global weak solution to the  Navier-Stokes-Vlasov equations,
 while Goudon, Jin, Liu and Yan \cite{GJLY} presented some numerical analysis on this model.

\subsubsection{Compressible kinetic-fluid models}
Mellet and Vasseur \cite{MV1,MV2}  studied the following compressible Navier-Stokes-Vlasov-Fokker-Planck system:
\begin{equation}\label{MV-eq}
\left\{\begin{aligned}
 &\partial_t f+v\cdot\nabla_{x}f+\dv_{v}(F_d f -\nabla_vf)=0, \\
&\partial_t n +\nabla_{x}\cdot(n u)=0,\\
&\partial_t(n u)+\nabla_{x}\cdot(n u\otimes u)-\Delta u+\nabla_{x} p=-\int_{\mathbb{R}^{3}}F_d f\,{\rm d}v,
\end{aligned}  \right.
\end{equation}
where
\begin{align}\label{dragforce}
 F_d=F_0(u-v), \ \ F_0>0  \text{ a constant. }
 \end{align}
  In \cite{MV1} they obtained the global existence of weak solutions of \eqref{MV-eq},  and in \cite{MV2} they studied the
asymptotic analysis of the solutions. In \cite{CKL}, Chae, Kang and  Lee studied
 the existence of the global classical solutions close to an equilibrium  and obtained exponential decay of the system to the system
 \eqref{MV-eq}.

When the viscous term $-\Delta u$ in the system \eqref{MV-eq} is dropped, Duan and Liu \cite{DL} studied the global well-posedenss of small
solution in the perturbation framework.
Carrillo and Goudon \cite{CG}  investigated the dissipative quantities, equilibria and their stability properties.
Morevoer, they also studied some asymptotic problems  and the derivation of macroscopic two-phase models.

As pointed out in \cite{MV2}, the choice of drag force \eqref{dragforce} may not be the most relevant one
from a physical point of view.  It could   be more relevant from a physical
point of view to assume that $F_d$ depends on the density of the fluid, such as
\begin{align}\label{dragforce2}
 F_d = n(u-v).
\end{align}
To our best knowledge, the first rigorous mathematical result concerning the case of the drag force depending on  the density $n$
was obtained by Baranger and  Desvillettes \cite{BD}, where the following inviscid system:
\begin{equation}\label{BD-eq}
\left\{\begin{aligned}
 &\partial_t f+v\cdot\nabla_{x}f+n  \nabla_{v}\cdot( f( u - v))=0, \\
&\partial_t n +\nabla_{x}\cdot(n u)=0,\\
&\partial_t(n u)+\nabla_{x}\cdot(n u\otimes u)+\nabla_{x} p=-n \int_{\mathbb{R}^{3}}f( u - v)\,{\rm d}v
\end{aligned}  \right.
\end{equation}
was considered and the local-in-time  classical solutions were given. In \cite{CGL}, the model \eqref{v1.1}-\eqref{v1.3} was
introduced but no mathematical result  was presented.
In this paper, we study the global existence and large-time  behavior of solutions to the model
\eqref{v1.1}-\eqref{v1.3}.  Our result is the first one on  the global existence  of
the strong solution to the kinetic-fluid model when the
drag force depends on the density.

\subsection{Strategy of the proofs of our main results}


Compared with the studies on the compressible kinetic-fluid models in literature,
the analysis of the system \eqref{v1.1}-\eqref{v1.3} or \eqref{v1.5}-\eqref{v1.7} 
is  more complicated and  difficult in mathematics, as explained below.

Our approach is different from those used to obtain the global existence of classical  solutions in \cite{GHMZ,CKL2,CDM} on the incompressible kinetic-fluid models
and \cite{CKL,DL} on the compressible kinetic-fluid models where the drag force is independent of the density.
Due to the nonlinear terms caused by the strong coupling  of $f$ and $\rho$ in the system \eqref{v1.5}-\eqref{v1.7},
we cannot use the existing results on the
Vlasov-Fokker-Planck system to obtain the regularity of $f$. Thus,
 we have to deal with the derivative of the particle velocity $v$ in our arguments.


Our main difficulties in obtaining the large time behavior of solutions
come again from the strong coupling of $f$ and $\rho$ in the system \eqref{v1.5}-\eqref{v1.7}.
It prevents us  from taking the advantage of the linearized spectral analysis to
gain the rate of convergence of solutions as   in  \cite{GHMZ,CKL2,CDM,CKL,DL}.
To overcome these difficulties, we shall construct some   novel functionals and adopt with modification some techniques  in \cite{DH} to deal with the  compressible Navier-Stokes equations and in \cite{LY} for the Landau equation.
Through the detailed analysis on the strong coupling terms of $f$ and $\rho$ we  obtain the desired estimates.
We believe that the methods developed in this paper can be applied to study the more complicated models in \cite{BBF,TSB},
which is our forthcoming research project.

The rest of paper is organized as follows.
In Section 2,
we shall establish the global existence of classical solutions to the  problem \eqref{v1.5}-\eqref{v1.8}
   in the spatial domain $\Omega=\mathbb{R}^3$ or $\mathbb{T}^3$.
By the fine energy estimates
 we mainly use the local existence of strong solutions and  continuum argument, motivated by
\cite{YG1,YG2} for the Boltzmann and Landau equations.
In Section 3, by means of the energy estimates for the temporal derivative of the system \eqref{v1.5}-\eqref{v1.7} and the Gronwall-type inequality, we eventually obtain the large time behavior.
In Section 4, we shall establish the uniform a priori estimates with the aid of some energy functionals and corresponding dissipation rate. Although the uniform a priori estimates obtained in Section 4 are needed in Sections 2 and 3, we shall present this lengthy part in the last section of the paper for the convenience of readers.

In  the rest of this paper, we   shall  omit the integral domain $\Omega\times \mathbb{R}^3$ or
$\mathbb{R}^3$ in the integrals for simplicity.

\bigskip

\section{Global  existence of the classical solutions}
   In this section, we shall establish the global existence of classical solutions to the  problem \eqref{v1.5}-\eqref{v1.8}
   in the spatial domain $\Omega=\mathbb{R}^3$ or $\mathbb{T}^3$. It is well known that
by the uniform a priori estimates we shall obtain    the global existence of solutions with the help of the local existence as well as
the continuum argument, under the smallness and regularity conditions on the initial data.
Here we first construct the iteration process
   to obtain the unique local solution,  then the global existence of solutions follows from the  continuum argument
and   the uniform a priori estimates   obtained in Section 4.

Now we define iteratively the sequence $(F^{n},\rho^{n},u^{n})_{n=0}^{\infty}$ as the solutions to the system:
\begin{equation*}
  \left\{\begin{aligned}
&\partial_{t}F^{n+1}+v\cdot \nabla_{x}F^{n+1}-(1+\rho^{n})\nabla_{v}\big(v F^{n+1}+\nabla_{v}F^{n+1}\big)\nonumber\\
&\qquad \qquad \qquad =-(1+\rho^{n})u^{n}\cdot\nabla_{v}F^{n+1},\\
&\partial_{t}\rho^{n+1}+u^{n}\cdot \nabla \rho^{n+1}+(1+\rho^{n})\ \dv u^{n}=0,\nonumber\\
&\partial_{t}u^{n+1}-\frac{1}{1+\rho^{n}}\Delta u^{n}=-u^{n}\cdot \nabla u^{n}
+\gamma (1+\rho^{n})^{\gamma-2}\nabla \rho^{n}
+u^{n}(1+a^{n})+b^{n}.\nonumber
\end{aligned}
\right.
\end{equation*}
Setting $F^{n}=M+\sqrt{M}f^{n}$, we can rewrite the above system as
\begin{align}
&\partial_{t}f^{n+1}+v\cdot \nabla_{x}f^{n+1}-(1+\rho^{n})\mathcal{L}f ^{n+1}\nonumber\\
&\qquad\qquad\qquad\qquad\qquad=-(1+\rho^{n})u^{n}\cdot\Big(\nabla_{v}f^{n+1}-\frac{v}{2}f^{n+1}-v\sqrt{M}\Big),\label{v2.31}\\
&\partial_{t}\rho^{n+1}+u^{n}\cdot \nabla \rho^{n+1}+(1+\rho^{n})\ \dv u^{n+1}=0,\label{v2.32} \\
&\partial_{t}u^{n+1}-\frac{1}{1+\rho^{n}}\Delta u^{n+1}=-u^{n}\cdot \nabla u^{n}
+\gamma (1+\rho^{n})^{\gamma-2}\nabla \rho^{n}
+u^{n}(1+a^{n})+b^{n},\label{v2.33}
\end{align}
where $n=0,1,2,\dots$, and $(f^{0},\rho^{0},u^{0})=(f_{0},\rho_{0},u_{0})$ is the starting value of iteration.

We define the solution space $X(0,T;A)$
by
\begin{align}
X(0,T;A) :=\left\{\begin{array}{c}
f\in C([0,T],H^{4}(\Omega\times\mathbb{R}^{3})),\, M+\sqrt{M}f\geq 0;\\
\rho\in C([0,T],H^{4}(\Omega))\bigcap C^{1}([0,T],H^{3}(\Omega));\\
u\in C([0,T],H^{4}(\Omega))\bigcap C^{1}([0,T],H^{2}(\Omega));\\
\sup_{0\leq t\leq T}\{\|f(t)\|_{H^{4}_{x,v}}+\|(\rho,u)\|_{H^{4}}\}\leq A;\\
\rho_{1} =\frac{1}{2}(-1+\inf \rho(0,x))>  -1; \\
\inf_{0\leq t\leq T, x\in \Omega}\rho(t,x)\geq \rho_{1}.
\end{array}
\right\}   \label{v2.34}
\end{align}

The main result of this section reads as follows.

\begin{Theorem}\label{vt2.1}
There exist $A_{0}>  0 $ and $ T^{*}> 0,$ such that if $f_{0}\in H^{4}(\Omega\times\mathbb{R}^{3}),
\rho_{0}\in H^{4}(\Omega),u_{0}\in H^{4}(\Omega)$ with $F_{0}=M+\sqrt{M}f_{0}\geq 0$
and $\mathcal{E}(0)\leq \frac{A_{0}}{2}$,
with $\mathcal{E}(0) \sim \|f_{0}\|^{2}_{H^{4}_{x,v}}+\|(\rho_{0},u_{0})\|^{2}_{H^{4}}$ (see the specific definition of $\mathcal{E}(0)$ in Section 4),
 then for each $n\geq 1$, $(f^{n},\rho^{n},u^{n})$ is well-defined with
\begin{align}
(f^{n},\rho^{n},u^{n})\in X(0,T^{*};A_{0}). \label{v2.35}
\end{align}
Moveover, the following statements hold:
\begin{enumerate}[\rm{(}1\rm{)}]
  \item $(f^{n},\rho^{n},u^{n})_{n\geq 0}$ is a Cauchy sequence in the Banach space $C([0,T^{*}];H^{3}(\Omega\times\mathbb{R}^{3}))\times C\big([0,T^{*}],H^{3}(\Omega)\big)
\times C\big([0,T^{*}],H^{3}(\Omega)\big)$,
    \item  the corresponding limit function denoted by $(f,\rho,u)$ belong to $X(0,T^{*};A_{0})$,

  \item $(f,\rho,u)$ are solutions to the Cauchy problem
\eqref{v1.5}-\eqref{v1.8},

\item  $(f,\rho,u)$ is unique in $X(0,T^{*};A_{0})$ for the problem \eqref{v1.5}-\eqref{v1.8}.

\end{enumerate}
\end{Theorem}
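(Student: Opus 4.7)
The plan is to carry out a Picard-type iteration in the standard pattern: construct each iterate via linear theory, propagate a uniform bound in $X(0,T^*;A_0)$, show the Cauchy property in a space of one less derivative, then extract and identify the limit.

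First I would treat each of \eqref{v2.31}, \eqref{v2.32}, \eqref{v2.33} as a linear problem with coefficients frozen at the previous iterate $(f^n,\rho^n,u^n)\in X(0,T^*;A_0)$. Equation \eqref{v2.31} is a linear Vlasov-Fokker-Planck equation for $f^{n+1}$ whose diffusion coefficient $(1+\rho^n)$ is uniformly bounded above and below thanks to the built-in constraint $\rho^n\geq \rho_1>-1$; equation \eqref{v2.32} is a pure transport equation along $u^n$; equation \eqref{v2.33} is a linear parabolic equation for $u^{n+1}$ with uniformly elliptic diffusion $\frac{1}{1+\rho^n}\Delta$. Existence of $(f^{n+1},\rho^{n+1},u^{n+1})$ in the stated regularity class follows from standard Galerkin or mollification constructions for each of these three linear problems. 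Nonnegativity $F^{n+1}=M+\sqrt{M}f^{n+1}\geq 0$ is preserved by the maximum principle applied to the equivalent Fokker-Planck equation on $F^{n+1}$ with drift $v-u^n$ and diffusion coefficient $(1+\rho^n)$.

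Next comes the inductive propagation of the bound $\mathcal{E}^{n+1}(t)\leq A_0$. Here I would invoke, with essentially cosmetic modifications, the a priori energy estimates of Section 4 applied to the \emph{linear} system \eqref{v2.31}-\eqref{v2.33}. The output is a differential inequality of the schematic form
\begin{equation*}
\frac{d}{dt}\mathcal{E}^{n+1}(t)+\lambda\mathcal{D}^{n+1}(t)\leq C\sqrt{\mathcal{E}^{n}(t)}\,\mathcal{D}^{n+1}(t)+C\bigl(\mathcal{E}^{n}(t)+\mathcal{E}^{n+1}(t)\bigr)\mathcal{E}^{n}(t),
\end{equation*}
which, once $A_0$ is small enough to absorb the $\mathcal{D}^{n+1}$ term on the right and $T^*=T^*(A_0)$ is small, integrates to $\mathcal{E}^{n+1}(t)\leq A_0$ on $[0,T^*]$. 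The lower bound $\rho^{n+1}\geq \rho_1$ is maintained because $\|\rho^{n+1}(t)-\rho_0\|_{L^\infty}\lesssim\|\rho^{n+1}(t)-\rho_0\|_{H^3}=O(T^*)$ by the equation \eqref{v2.32} and Sobolev embedding, so the induction closes.

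For the Cauchy property I would write equations for the differences $g^{n+1}=f^{n+1}-f^n$, $\sigma^{n+1}=\rho^{n+1}-\rho^n$, $w^{n+1}=u^{n+1}-u^n$ and perform energy estimates at the $H^3$ level; losing one derivative removes the difficulty that the right-hand sides contain first derivatives of the previous differences, and one obtains
\begin{equation*}
\sup_{[0,T^*]}\Bigl(\|g^{n+1}\|_{H^3_{x,v}}^2+\|(\sigma^{n+1},w^{n+1})\|_{H^3}^2\Bigr)\leq \tfrac{1}{2}\sup_{[0,T^*]}\Bigl(\|g^{n}\|_{H^3_{x,v}}^2+\|(\sigma^{n},w^{n})\|_{H^3}^2\Bigr)
\end{equation*}
upon shrinking $T^*$ once more. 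Thus $(f^n,\rho^n,u^n)$ converges strongly in $C([0,T^*];H^3)$ to some $(f,\rho,u)$, and the uniform $H^4$ bounds together with weak-$*$ compactness and lower semicontinuity give $(f,\rho,u)\in X(0,T^*;A_0)$. Strong $H^3$ convergence is ample to pass to the limit in every nonlinear term of \eqref{v1.5}-\eqref{v1.7}, and the same difference estimate applied to two solutions in $X(0,T^*;A_0)$ yields uniqueness.

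The main obstacle is the uniform estimate in step two: the drift $(1+\rho^n)u^n\cdot(\nabla_v f^{n+1}-\frac{v}{2}f^{n+1}-v\sqrt{M})$ in \eqref{v2.31} couples $v$-derivatives of the new iterate with the fluid coefficients of the previous one, so the favorable cancellations between the $f$- and $(\rho,u)$-estimates that close the analysis of the full nonlinear system in Section 4 are broken in the linearized setting. Reproducing them requires choosing $\tau_1,\tau_2,\tau_3$ and the $C_k$ exactly as in \eqref{v2.16}-\eqref{vz6} and relying on the smallness of $A_0$ and $T^*$ to absorb the leftover cross terms into $\lambda\mathcal{D}^{n+1}$; keeping track of the mixed-derivative pieces $\mathcal{E}_2^{n+1}$, $\mathcal{D}_2^{n+1}$ through the $v$-derivatives of \eqref{v2.31} is the most delicate calculation.
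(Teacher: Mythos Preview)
Your overall scheme---linear solvability of each iterate, inductive propagation of a uniform bound, contraction in $H^3$, passage to the limit, uniqueness by the same difference estimate---matches the paper exactly. The maximum-principle argument for $F^{n+1}\geq 0$ and the loss-of-one-derivative Cauchy estimate are also the same.

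Where you diverge is in the uniform-bound step, and this is worth pointing out because you have made it harder than necessary. You propose to run the full Section~4 machinery (the functionals $\mathcal{E}_0$, the cross terms $\int\partial^\alpha u\cdot\partial^\alpha\nabla\rho$, the macro-micro decomposition, the constants $\tau_1,\tau_2,\tau_3,C_k$) on the linearized iteration, and you correctly flag that the cancellations underlying those estimates are broken when the coefficients are frozen at step $n$ while the unknown is at step $n+1$. The paper simply does not do this. For local existence it abandons the refined dissipation structure entirely and performs a crude $H^4_{x,v}\times H^4\times H^4$ energy estimate directly on \eqref{v2.31}--\eqref{v2.33}: apply $\partial^\alpha_\beta$, multiply by $\partial^\alpha_\beta f^{n+1}$ (respectively $\partial^\alpha\rho^{n+1}$, $\partial^\alpha u^{n+1}$), and accept that the right-hand side contains terms like $C(1+\|\rho^n\|_{H^4}^2)\|u^n\|_{H^4}^2(1+\|f^{n+1}\|_{H^4_{x,v}}^2)$ with no smallness prefactor. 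The point is that for a \emph{local} result these terms are harmless: integrating over $[0,T]$ produces factors of $T$, and choosing $T^*$ and $A_0$ small closes the induction $A_{n+1}(T^*)\leq A_0$ without any appeal to the delicate coercivity of Section~4. The only place smallness of $A_0$ (rather than $T^*$) is genuinely used is to absorb the single term $C\|\rho^n\|_{H^4}\sum\|\partial^\alpha_\beta f^{n+1}\|_\nu^2$ into the left-hand dissipation.

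So your ``main obstacle'' is self-inflicted: the Section~4 estimates are designed for \emph{global} existence, where one must extract genuine decay from the dissipation and cannot rely on short time. For the local theorem the paper's direct estimate is both simpler and avoids the broken-cancellation issue you worried about.
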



\begin{proof} Let $T^*>0$ be a constant which will be fixed later.
For brevity, we can assume that $(f^{n},\rho^{n},u^{n})$ are smooth enough in order to take the forthcoming calculations, otherwise,
we can consider the following regularized iteration system:
\begin{align}
&\partial_{t}f^{n+1,\epsilon}+v\cdot \nabla_{x}f^{n+1,\epsilon}-(1+\rho^{n,\epsilon})\mathcal{L}f ^{n+1,\epsilon}\nonumber\\
&\qquad\qquad\qquad=-(1+\rho^{n,\epsilon})u^{n,\epsilon}\cdot\Big(\nabla_{v}f^{n+1,\epsilon}-\frac{v}{2}f^{n+1,\epsilon}-v\sqrt{M}\Big),\nonumber\\
&\partial_{t}\rho^{n+1,\epsilon}+u^{n,\epsilon}\cdot \nabla \rho^{n+1,\epsilon}+(1+\rho^{n,\epsilon})\ \dv u^{n+1,\epsilon}=0,\nonumber \\
&\partial_{t}u^{n+1,\epsilon}-\frac{1}{1+\rho^{n,\epsilon}}\Delta u^{n+1,\epsilon}=-u^{n,\epsilon}\cdot \nabla u^{n,\epsilon}\nonumber \\
&\qquad\qquad\qquad+\gamma (1+\rho^{n,\epsilon})^{\gamma-2}\nabla \rho^{n,\epsilon}+u^{n,\epsilon}(1+a^{n,\epsilon})+b^{n,\epsilon},\nonumber\\
&  (f^{n+1,\epsilon},\rho^{n+1,\epsilon},u^{n+1,\epsilon})(0)=(f^{\epsilon}_{0},\rho^{\epsilon}_{0},u^{\epsilon}_{0})\nonumber
\end{align}
for any $\epsilon>0  $ with $(f^{\epsilon}_{0},\rho^{\epsilon}_{0},u^{\epsilon}_{0})$  a smooth approximation of $(f_{0},\rho_{0},u_{0})$ and pass to the limit
by letting $\epsilon\rightarrow 0.$

Applying $\partial^{\alpha}_{x}$ with $|\alpha|\leq 4$ to the equation \eqref{v2.31}, multiplying the result by $\partial^{\alpha}_{x}f^{n+1}$
and then taking integration over $\Omega$, one has
\begin{align}
&\quad \frac{1}{2}\frac{\rm d}{{\rm d}t}\|\partial^{\alpha}f^{n+1}\|^{2}+\int(1+\rho^{n})\langle -\mathcal{L}\partial^{\alpha}f^{n+1},\partial^{\alpha}f^{n+1}\rangle   \, {\rm d}x \nonumber\\
&=\sum_{0\langle  \beta\leq \alpha}C_{\alpha,\beta}\int\partial^{\beta}\rho^{n}\langle \mathcal{L}\partial^{\alpha-\beta}f^{n+1},\partial^{\alpha}f^{n+1}\rangle   \, {\rm d}x \nonumber\\
&\quad-\iint\partial^{\alpha}\Big\{(1+\rho^{n})u^{n}M^{-\frac{1}{2}}\nabla_{v}(M+\sqrt{M}f^{n+1})\Big\}\partial^{\alpha}f^{n+1}\, {\rm d}x  {\rm d}v\nonumber\\
&\leq C(1+\|\rho^{n}\|_{H^{4}})\|u^{n}\|_{H^{4}}\|f^{n+1}\|_{L^{2}_{v}(H^{4})}\nonumber\\
&\quad+C(1+\|\rho^{n}\|_{H^{4}})\|u^{n}\|_{H^{4}}\|f^{n+1}\|_{L^{2}_{v}(H^{4})}\|\partial^{\alpha}f^{n+1}\|_{\nu}\nonumber\\
&\quad+C\|\rho^{n}\|_{H^{4}}\Big(\sum_{|\alpha'|\leq 3}\|\partial^{\alpha'f^{n+1}}\|_{\nu}\Big)\|\partial^{\alpha}f^{n+1}\|_{\nu}.\label{v2333}
\end{align}
Notice that   $$\int(1+\rho^{n})\langle -\mathcal{L}\partial^{\alpha}f^{n+1},\partial^{\alpha}f^{n+1}\rangle   \, {\rm d}x
\geq \lambda \|\{\mathbf{I}-\mathbf{P}_{0}\}\partial^{\alpha}f^{n+1}\|^{2}_{\nu}.$$
 By adding $\|\mathbf{P}_{0}\partial^{\alpha}f^{n+1}\|^{2}_{\nu}$ to both sides
on the   inequality \eqref{v2333}, then, taking summation over $|\alpha|\leq 4$, we have
\begin{align}
&\quad \frac{1}{2}\frac{\rm d}{{\rm d}t}\sum_{|\alpha|\leq 4}\|\partial^{\alpha}f^{n+1}\|^{2}+\lambda\sum_{|\alpha|\leq 4}\|\partial^{\alpha}f^{n+1}\|^{2}_{\nu}\nonumber\\
&\leq C(1+\|\rho^{n}\|_{H^{4}})\|u^{n}\|_{H^{4}}\|f^{n+1}\|_{L^{2}_{v}(H^{4})}\nonumber\\
&\quad +C(1+\|\rho^{n}\|^{2}_{H^{4}})\|u^{n}\|^{2}_{H^{4}}\|f^{n+1}\|^{2}_{L^{2}_{v}(H^{4})}\nonumber\\
&\quad +C\|\rho^{n}\|_{H^{4}}\sum_{|\alpha|\leq 4}\|\partial^{\alpha}f^{n+1}\|^{2}_{\nu}+C\|f^{n+1}\|^{2}_{L^{2}_{v}(H^{4})}.\nonumber
\end{align}
Similarly, for any $0\leq t\leq T\leq T^*$, we obtain that
\begin{align}
&\!\!\!\!\!\!\!\!\!\!\!\!\!\!\!\! \frac{1}{2}\frac{\rm d}{{\rm d}t}\|f^{n+1}\|^{2}_{H^{4}_{x,v}}+\lambda \sum_{|\alpha|+|\beta|\leq 4}\|\partial^{\alpha}_{\beta}f^{n+1}\|^{2}_{\nu}\nonumber\\
&\leq C(1+\|\rho^{n}\|^{2}_{H^{4}})\|f^{n+1}\|^{2}_{H^{4}_{x,v}}+C\|\rho^{n}\|_{H^{4}}\sum_{|\alpha|+|\beta|\leq 4}\|\partial^{\alpha}_{\beta}f^{n+1}\|^{2}_{\nu}\nonumber\\
&\quad+C(1+\|\rho^{n}\|^{2}_{H^{4}})\|u^{n}\|^{2}_{H^{4}}(1+\|f^{n+1}\|^{2}_{H^{4}_{x,v}}).\label{v2.36}
\end{align}

Next, according to \cite{MN}, for the system \eqref{v2.32} and \eqref{v2.33}, there exists a unique solution $(\rho^{n+1},u^{n+1})$ satisfying
$\rho^{n+1}\geq \rho_{1}$, and
$$\rho^{n+1}\in C([0,T],H^{4}(\Omega))\cap C^{1}([0,T],H^{3}(\Omega)),$$
$$u^{n+1}\in C([0,T],H^{4}(\Omega))\cap C^{1}([0,T],H^{2}(\Omega)).$$

Now we  estimate $\frac{\rm d}{{\rm d}t}\|(\rho^{n+1},u^{n+1})\|^{2}_{H^{4}}$.
Applying $\partial^{\alpha}\,(|\alpha|\leq 4)$ to the system \eqref{v2.32} and \eqref{v2.33}, multiplying the results by
$\partial^{\alpha}\rho^{n+1}$ and $ \partial^{\alpha}u^{n+1}$ respectively, and then taking integration and  summation, one has
\begin{align}
&\!\!\!\!\!\!\!\!\!\!\!\!\!\!\!\!\!\!\!\!\!\!\!\!\frac{1}{2}\frac{\rm d}{{\rm d}t}\Big(\|\rho^{n+1}\|^{2}_{H^{4}}+\|u^{n+1}\|^{2}_{H^{4}}\Big)
 +\lambda\sum_{|\alpha|\leq 4}\int|\nabla \partial^{\alpha}u^{n+1}|^{2}{\rm d}x\nonumber\\
\leq\, & C(1+\|\rho^{n}\|^{2}_{H^{4}}+\|u^{n}\|^{2}_{H^{4}})\|\rho^{n+1}\|^{2}_{H^{4}}+C(1+\|\rho^{n}\|^{2}_{H^{4}})\|u^{n+1}\|^{2}_{H^{4}}\nonumber\\
&+C(\|u^{n}\|^{2}_{H^{4}}+\|f^{n}\|^{2}_{H^{4}})(1+\|u^{n}\|^{2}_{H^{4}})+C\|\rho^{n}\|^{2}_{H^{4}}(1+\|\rho^{n}\|^{6}_{H^{4}}).\label{v2.37}
\end{align}
Adding up   \eqref{v2.36} and \eqref{v2.37} gives
\begin{align}
&\!\!\!\!\!\!\!\!\!\!\!\!\!\!\!\!\frac{1}{2}\frac{\rm d}{{\rm d}t}\Big(\|f^{n+1}\|^{2}_{H^{4}_{x,v}}+\|\rho^{n+1}\|^{2}_{H^{4}}+\|u^{n+1}\|^{2}_{H^{4}}\Big)\nonumber\\
&\quad+\lambda
\sum_{|\alpha|+|\beta|\leq 4}\|\partial^{\alpha}_{\beta}f^{n+1}\|^{2}_{\nu}+\lambda\sum_{|\alpha|\leq 4}\|\nabla\partial^{\alpha}u^{n+1}\|^{2}\nonumber\\
&\leq C(1+\|\rho^{n}\|^{2}_{H^{4}})\|f^{n+1}\|^{2}_{H^{4}_{x,v}}+C(1+\|\rho^{n}\|^{2}_{H^{4}})\|u^{n}\|^{2}_{H^{4}}(1+\|f^{n+1}\|^{2}_{H^{4}_{x,v}})\nonumber\\
&\quad+C(1+\|\rho^{n}\|^{2}_{H^{4}}+\|u^{n}\|^{2}_{H^{4}})\|\rho^{n+1}\|^{2}_{H^{4}}+C(1+\|\rho^{n}\|^{2}_{H^{4}})\|u^{n+1}\|^{2}_{H^{4}}\nonumber\\
&\quad+C(1+\|u^{n}\|^{2}_{H^{4}})(\|f^{n}\|^{2}_{H^{4}_{x,v}}+\|u^{n}\|^{2}_{H^{4}})+C\|\rho^{n}\|^{2}_{H^{4}}(1+\|\rho^{n}\|^{6}_{H^{4}})\nonumber\\
&\quad+C\|\rho^{n}\|_{H^{4}}\sum_{|\alpha|+|\beta|\leq 4}\|\partial^{\alpha}_{\beta}f^{n+1}\|^{2}_{\nu}.\label{v2.38}
\end{align}
Using induction, we may assume $A_{n}(T)\leq A_{0}$ and $A_{n}(0)\leq \frac{A_{0}}{2}$ for some $A_0>0$  with
$$A_{n}(T) :=\sup_{0\leq t\leq T}\big\{\|\rho^{n}(t)\|^{2}_{H^{4}}+\|u^{n}(t)\|^{2}_{H^{4}}+\|f^{n}(t)\|^{2}_{H^{4}_{x,v}}\big\}.$$
Integrating \eqref{v2.38} over $[0,T]$ gives
\begin{align}
&\!\!\!\!\!\!\!\!\!\!\!\!\!\!\!\! A_{n+1}(T)+\lambda \int_{0}^{T}\Big\{\sum_{|\alpha|\leq 4}\|\nabla \partial^{\alpha}u^{n+1}\|^{2}
+\sum_{|\alpha|+|\beta|\leq 4}\|\partial^{\alpha}_{\beta}f^{n+1}\|^{2}_{\nu}\Big\}{\rm d} t\nonumber\\
&\leq A_{n+1}(0)+C(1+A^{\frac{1}{2}}_{n}(T)+A^{2}_{n}(T))A_{n+1}(T)T+C(A_{n}(T)+A^{4}_{n}(T))T\nonumber\\
&\quad+CA^{\frac{1}{2}}_{n}(T)\sum_{|\alpha|+|\beta|\leq 4}\|\partial^{\alpha}_{\beta}f^{n+1}\|^{2}_{\nu}\nonumber\\
&\leq\frac{A_{0}}{2}+C(1+A^{2}_{0})TA_{n+1}(T)+C(A_{0}+A^{4}_{0})T\nonumber\\
&\quad+CA^{\frac{1}{2}}_{n}(T)\sum_{|\alpha|+|\beta|\leq 4}\|\partial^{\alpha}_{\beta}f^{n+1}\|^{2}_{\nu}.\label{v2.39}
\end{align}
It follows that, for $T\leq T^{*}$,
$$\big(1-C(1+A^{2}_{0})T\big)A_{n+1}(T)\leq \frac{A_{0}}{2}+C(A_{0}+A^{4}_{0})T.$$
Choosing $T^*$ satisfying  $T^{*}\leq \frac{A_{0}}{2}$, and $A_{0}$  sufficiently small, we conclude that
$$A_{n+1}\leq A_{0}.$$
For the equation of $F^{n+1}$, with the help of the maximum principle, we have
$$F^{n+1}=M+\sqrt{M}f^{n+1}\geq 0.$$

Now we explain that $\|f^{n+1}\|^{2}_{H^{4}_{x,v}}$ is continuous over $0\leq t\leq T^{*}$. In fact,
it  follows from the  inequality:
\begin{align}
 &  \!\!\!\!\!\!\!\!\!\!\!\!\!\!\!\!\Big|\|f^{n+1}(t)\|^{2}_{H^{4}_{x,v}}-\|f^{n+1}(s)\|^{2}_{H^{4}_{x,v}}\Big|\nonumber\\
 =\,&\Big|\int_{s}^{t}\frac{\mathrm{d}}{\mathrm{d}\eta}\|f^{n+1}(\eta)\|^{2}_{H^{4}_{x,v}}\mathrm{d}\eta\Big|\nonumber\\
\leq\, & C A^{\frac{1}{2}}_{0}\sum_{|\alpha|+|\beta|\leq 4} \int_{s}^{t}\|\partial^{\alpha}_{\beta}f^{n+1}\|^{2}_{\nu}\mathrm{d}\eta
 +C(A_{0}+A^{3}_{0})|t-s|, \label{v2.40}
 \end{align}
which can be proved by the same process as    the proof of  \eqref{v2.36}.
Meanwhile, $\|\partial^{\alpha}_{\beta}f^{n+1}\|^{2}_{\nu}$ is integrable over $[0,T^{*}]$.
 Hence, \eqref{v2.35} holds true for $n+1$ and so it does for any $n\geq 0$.

Next, we study the following system:
\begin{align}
&\partial_{t}(f^{n+1}-f^{n})+v\cdot \nabla_{x}(f^{n+1}-f^{n})-L(f^{n+1}-f^{n})\nonumber\\
&\qquad=\rho^{n}\mathcal{L}f ^{n+1}+(1+\rho^{n})u^{n}(v\sqrt{M}+\frac{v}{2}f^{n+1}-\nabla_{v}f^{n+1})\nonumber\\
&\qquad\quad -(1+\rho^{n-1})u^{n-1}(v\sqrt{M}+\frac{v}{2}f^{n}-\nabla_{v}f^{n})-\rho^{n-1}\mathcal{L}f ^{n},\nonumber\\
&\partial_{t}(u^{n+1}-u^{n})-\frac{1}{1+\rho^{n}}\Delta(u^{n+1}-u^{n})\nonumber\\
& \qquad =\Big(\frac{1}{1+\rho^{n}}-\frac{1}{1+\rho^{n-1}}\Big)\Delta u^{n}\nonumber\\
&\qquad\quad -\big((u^{n}-u^{n-1})\cdot \nabla u^{n}+u^{n-1}\cdot \nabla(u^{n}-u^{n-1})\big)
\nonumber\\
&\qquad\quad +\gamma \big((1+\rho^{n})^{\gamma-2}\nabla \rho^{n}-(1+\rho^{n-1})^{\gamma-2}\nabla \rho^{n-1}\big)\nonumber\\
&\qquad\quad +b^{n}-b^{n-1}+(a^{n}-a^{n-1})u^{n-1}+(u^{n}-u^{n-1})(1+a^{n}),\nonumber\\
&\partial_{t}(\rho^{n+1}-\rho^{n})+u^{n}\cdot \nabla (\rho^{n+1}-\rho^{n})\nonumber\\
& \qquad =-(u^{n}-u^{n-1})\cdot \nabla \rho^{n}-(1+\rho^{n})\ {\rm d}v(u^{n+1}-u^{n})-(\rho^{n}-\rho^{n-1})\ \dv u^{n}.\nonumber
\end{align}
Similarly to  \eqref{v2.38}, we obtain that
\begin{align}
&\quad \frac{1}{2}\frac{\rm d}{{\rm d}t}\big\{\|f^{n+1}-f^{n}\|^{2}_{H^{3}_{x,v}}+\|\rho^{n+1}-\rho^{n}\|^{2}_{H^{3}}+\|u^{n+1}-u^{n}\|^{2}_{H^{3}}\big\}\nonumber\\
&\quad+\lambda\sum_{|\alpha|+|\beta|\leq 3}\|\partial^{\alpha}_{\beta}(f^{n+1}-f^{n})\|^{2}_{\nu}+\lambda\|\nabla(u^{n+1}-u^{n})\|^{2}_{H^{3}}\nonumber\\
&\leq C \big(1+\|\rho^{n}\|^{4}_{H^{4}}+\|u^{n}\|^{4}_{H^{4}}\big)\nonumber\\
& \quad \times \big(\|f^{n+1}-f^{n}\|^{2}_{H^{3}_{x,v}}+\|\rho^{n+1}-\rho^{n}\|^{2}_{H^{3}}+\|u^{n+1}-u^{n}\|^{2}_{H^{3}}\big)\nonumber\\
&\quad+C\|\rho^{n}\|_{H^{3}}\sum_{|\alpha|+|\beta|\leq 3}\|\partial^{\alpha}_{\beta}(f^{n+1}-f^{n})\|^{2}_{\nu}
+C(1+\|u^{n-1}\|^{2}_{H^{3}})\|f^{n}-f^{n-1}\|^{2}_{H^{3}}\nonumber\\
&\quad+C\big\{(1+\|\rho^{n}\|^{2}_{H^{4}})(1+\|f^{n}\|^{2}_{H^{3}})+\|(u^{n-1},f^{n},u^{n})\|^{2}_{H^{4}}\big\}\|u^{n}-u^{n-1}\|^{2}_{H^{3}}\nonumber\\
&\quad+C\bigg\{1+\|(u^{n},u^{n-1})\|^{2}_{H^{4}}+\|(\rho^{n},u^{n})\|^{2}_{H^{4}}\|\rho^{n}\|^{6}_{H^{4}}+(1+\|u^{n-1}\|^{2}_{H^{3}})\|f^{n}\|^{2}_{H^{3}}\nonumber\\
&\quad+\sum_{|\alpha|+|\beta|\leq 3}\|\partial^{\alpha}_{\beta}f^{n}\|^{2}_{\nu}\bigg\}\|\rho^{n}-\rho^{n-1}\|^{2}_{H^{3}}.\nonumber
\end{align}
Here we have used the Sobolev embedding $H^{2}(\Omega)\hookrightarrow L^{\infty}(\Omega)$.
Since $\mathcal{E}(0)$, $T^{*}$, and $A_{0}$ are sufficiently small, by using \eqref{v2.39}, we know that
\begin{align}
\sup_{n}\int_{0}^{T^{*}}\sum_{|\alpha|+|\beta|\leq 4}\|\partial^{\alpha}_{\beta}f^{n}\|^{2}_{\nu}\,\mathrm{d}s \nonumber
\end{align}
is also sufficiently small. Hence, there exists a constant $\kappa < 1$, such that
\begin{align}
&\sup_{0< t\leq T^{*}}\big\{\|f^{n+1}-f^{n}\|_{H^{4}}+\|\rho^{n+1}-\rho^{n}\|_{H^{4}}+\|u^{n+1}-u^{n}\|_{H^{4}}\big\}  \nonumber\\
&\qquad\leq\kappa\sup_{0< t\leq T^{*}}\big\{\|f^{n}-f^{n-1}\|_{H^{4}}+\|\rho^{n}-\rho^{n-1}\|_{H^{4}}+\|u^{n}-u^{n-1}\|_{H^{4}}\big\}.\label{v2.41}
\end{align}
According to \eqref{v2.41}, we conclude that $(f^{n},\rho^{n},u^{n})_{n\geq 0}$ is a Cauchy sequence in the Banach space
$C\big([0,T^{*}],H^{3}(\Omega\times\mathbb{R}^{3})\big)\times C\big([0,T^{*}],H^{3}(\Omega)\big)
\times C\big([0,T^{*}],H^{3}(\Omega)\big)$.
Hence, in this Banach space, there exists a limit function $(f,\rho,u)$ such that $(f,\rho,u)$ is a  solution to the Cauchy problem
 \eqref{v1.5}-\eqref{v1.8} by letting $n\rightarrow \infty$. From the fact that $F^{n}(t,x,v)\geq 0$  and the Sobolev embedding theorem, we deduce that
 $$F(t,x,v)\geq 0, \quad  \sup_{0\leq t\leq T^{*}}\|f(t)\|_{H^{4}_{x,v}}\leq A_{0}.$$
 Similarly to the proof of \eqref{v2.40}, we see  that $f\in C\big([0,T^{*}],H^{3}(\Omega\times\mathbb{R}^{3})\big).$
 Thus, we can conclude that $(f,\rho,u)\in X(0,T^{*};A_{0})$.
 Finally, let $(\bar{f},\bar{\rho},\bar{u})\in X(0,T^{*},A_{0})$  be another solution to the Cauchy problem \eqref{v1.5}-\eqref{v1.8}.
 By taking the similar process  to that of  \eqref{v2.41}, we have
 \begin{align}
 &\sup_{0< t\leq T^{*}}\big\{\|f-\bar{f}\|_{H^{4}}+\|\rho-\bar{\rho}\|_{H^{4}}+\|u-\bar{u}\|_{H^{4}}\big\}  \nonumber\\
&\qquad\leq\kappa\sup_{0< t\leq T^{*}}\big\{\|f-\bar{f}\|_{H^{4}}+\|\rho-\bar{\rho}\|_{H^{4}}+\|u-\bar{u}\|_{H^{4}}\big\}\nonumber
\end{align}
for $\kappa < 1$. Hence  we deduce that $f\equiv \bar{f}, \rho\equiv \bar{\rho}, u\equiv\bar{u}$ and uniqueness follows.
\end{proof}

Since $\mathcal{E}(0) \sim \|f_{0}\|^{2}_{H^{4}_{x,v}}+\|(\rho_{0},u_{0})\|^{2}_{H^{4}}$,   there exists $\epsilon_{0}>0$,
such that if
 $\|f_{0}\|_{H^{4}_{x,v}}+\|(\rho_{0},u_{0})\|_{H^{4}}<\epsilon_{0}$, we have $\mathcal{E}(0)\leq \frac{A_{0}}{2}$.
 Next, with the aid of Theorem \ref{vt2.1}, we obtain  the local solution on $[0,T^{*}]$  that satisfies the uniform
 a priori estimate  \eqref{v2.30} in Section 4. Finally, by taking the standard bootstrap  arguments similar to  those in \cite{DFT,YG,MN}, we
obtain the global existence and uniqueness of classical solutions  in both Theorem \ref{vt1.1} and Theorem \ref{v1.2}.


\bigskip

\section{Large time behavior of the classical solutions}
In this section, we investigate  the time-decay rates of global solutions to the problem \eqref{v1.5}-\eqref{v1.8}.
We obtain the algebraic rate of convergence of
 solution toward the equilibrium state  in the whole space case,
  while for the  periodic domain case,  the convergence rate is  exponential.

\subsection{The case of the whole space}
In this subsection we consider the large time behavior of classical solutions in the whole space $\Omega=\mathbb{R}^{3}$.
In order to obtain the desired decay rate \eqref{v1.11} in Theorem \ref{vt1.1},
we first introduce some new functionals which are similar to those \cite{DH,LY} in spirit. Then we perform the energy analysis to the temporal
derivative of the system \eqref{v1.5}-\eqref{v1.7}, instead of the original equations, to gain the one-order derivative.
Finally, we combine the energy estimates together with the uniform a priori estimates obtained in Section 4
 and  the following Gronwall-type inequality to obtain the time decay of the solutions.
For simplicity, we shall denote $\big(f_{t},\rho_{t}, u_{t}\big)=\big(\partial_{t}f, \partial_{t}\rho,\partial_{t}u\big)$.

\begin{Lemma}[\cite{DH}]\label{vl3.1}
Let $y(t)\in C^{1}([t_{0},\infty))$ satisfy $y(t)\geq 0, A=\int^{\infty}_{t_{0}} y(s)\mathrm{d}s < +\infty$  and
$y'(t)\leq a(t)y(t)$ for all $t\geq t_{0}$. If $a(t)\geq 0$ and $B=\int^{\infty}_{t_{0}}a(s)\mathrm{d}s < +\infty,$
then
$$y(t)\leq\frac{(t_{0}y(t_{0})+1)\exp(A+B)-1}{t},   \quad   \forall \, t\geq t_{0}.$$
\end{Lemma}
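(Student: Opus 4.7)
The plan is to reduce this to the classical Gronwall inequality by working with the auxiliary function $h(t) := t\,y(t) + 1$ instead of $y(t)$ directly. Differentiating and applying the hypothesis $y'(t)\le a(t)y(t)$ gives $h'(t) = y(t) + t\,y'(t) \le y(t) + t\,a(t)\,y(t)$. Since $y\ge 0$ implies $h(t)\ge 1$, one has $y(t)\le y(t)\,h(t)$; and $t\,y(t)\le h(t)$ by construction, so $t\,a(t)\,y(t)\le a(t)\,h(t)$. Adding the two estimates yields the linear differential inequality $h'(t)\le (y(t)+a(t))\,h(t)$.

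Applying Gronwall's inequality on $[t_{0},t]$ then produces $h(t)\le h(t_{0})\exp\!\bigl(\int_{t_{0}}^{t}(y(s)+a(s))\,\mathrm{d}s\bigr)$, and the exponent is bounded by $A+B$ because $y,a\ge 0$ and both $A$ and $B$ are finite by assumption. Consequently $t\,y(t)+1 \le (t_{0}y(t_{0})+1)\exp(A+B)$, and rearranging and dividing by $t$ yields the stated bound.

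The only step that requires a moment of care is the inequality $y(t)+t\,a(t)\,y(t)\le (y(t)+a(t))\,h(t)$: it relies crucially on the ``$+1$'' in the definition of $h$, which guarantees $h\ge 1$ uniformly and lets us absorb the bare $y(t)$ term. Without that constant shift the bound would degenerate whenever $y(t)$ becomes small, and the Gronwall step would fail. Once this is observed there is no further obstacle; the argument is essentially a short change-of-variable trick reducing the claim to a textbook application of Gronwall's lemma.
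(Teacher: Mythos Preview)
Your proof is correct. The paper does not supply its own proof of this lemma; it simply cites the result from \cite{DH} and uses it as a black box. Your argument---introducing $h(t)=t\,y(t)+1$, deriving the linear differential inequality $h'(t)\le (y(t)+a(t))h(t)$, and closing with Gronwall---is the standard route and matches the original proof in Deckelnick's paper. One implicit assumption worth making explicit is $t_{0}>0$ (needed so that $t\,y'(t)\le t\,a(t)\,y(t)$ follows from $y'\le a y$, and so that the final division by $t$ is legitimate); this is harmless here since the lemma is only applied in the paper for positive times.
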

 First of all, we consider the   system for $(f_t,\rho_t,u_t)$:
 \begin{align}
&\partial_{t}f_{t}+v\cdot \nabla_{x}f_{t}-(1+\rho)\mathcal{L}f _{t}=\rho_{t}\mathcal{L}f -(1+\rho)u\cdot\Big(\nabla_{v}f_{t}-\frac{v}{2}f_{t}\Big)\nonumber\\
&\qquad-\big(\rho_{t}u+(1+\rho)u_{t}\big)\cdot\Big(\nabla_{v}f-\frac{v}{2}f-v\sqrt{M}\Big),\label{v3.1}\\
&\partial_{t}\rho_{t}+u\cdot\nabla\rho_{t}+\rho_{t}\ \dv u=-u_{t}\nabla \rho-(1+\rho)\ \dv u_{t},\label{v3.2}\\
&\partial_{t}u_{t}+u_{t}\cdot \nabla u + u\cdot \nabla u_{t}-\frac{1}{1+\rho}\Delta u_{t}+\frac{p'(1+\rho)}{1+\rho}\nabla \rho_{t}
+\frac{1}{(1+\rho)^{2}}\rho_{t}\Delta u\nonumber\\
&\qquad=- \gamma(\gamma-2)(1+\rho)^{\gamma-3}\rho_{t}\nabla \rho-u_{t}(1+a)-u a_{t}+b_{t}.\label{v3.3}
\end{align}
Notice that there are some  similar structures between the system \eqref{v3.1}-\eqref{v3.3} and  the system \eqref{v1.5}-\eqref{v1.7},
and we can benefit from the proofs of Lemmas \ref{vl2.2}-\ref{vl2.6}.
Thus, we will omit  some details of the proof of the following lemma.
\begin{Proposition}\label{vl3.2}
Assume that $(f,\rho,u)$ is the solution obtained in Theorem \ref{vt1.1}. Then, we have
\begin{align}
&\!\!\!\!\!\!\!\!\!\!\!\!\!\!\!\! \frac{1}{2}\frac{\rm d}{{\rm d}t}\big(\|f_{t}\|^{2}+\|\rho_{t}\|^{2}+\|u_{t}\|^{2}\big)
+\lambda\|\{\mathbf{I}-\mathbf{P}\}f_{t}\|^{2}_{\nu}+\lambda\|\nabla u_{t}\|^{2}+\lambda\|u_{t}-b_{t}\|^{2}\nonumber\\
&\leq\epsilon \|\nabla (\rho_{t},b_{t})\|^{2}
+C_{\epsilon}\big(\|\{\mathbf{I}-\mathbf{P}\}f\|^{2}_{\nu}+\|u-b\|^{2}+\|\nabla u\|^{2}_{H^{1}}\big)\|\rho_{t}\|^{2}\nonumber\\
&\quad +C(1+\|\rho\|_{H^{2}})\|u\|_{H^{2}}\|\{\mathbf{I}-\mathbf{P}\}f_{t}\|^{2}_{\nu}+ C(1+\|\rho\|^{2}_{H^{2}}+\|u\|^{2}_{H^{2}})\nonumber\\
&\quad\times\bigg\{\sum_{|\alpha|\leq 2}\|\partial^{\alpha}_{x}\{\mathbf{I}-\mathbf{P}\}f\|^{2}_{\nu}
+\|\nabla_{x}(a,b)\|^{2}_{H^{1}}
+\|\nabla_{x}(\rho,u)\|^{2}_{H^{2}}\bigg\}\nonumber\\
&\quad \times \big(\|f_{t}\|^{2}+\|(\rho_{t},u_{t})\|^{2}_{H^{2}}\big)\label{v3.4}
\end{align}
for $\epsilon >0$ sufficiently small.
\end{Proposition}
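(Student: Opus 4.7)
The plan is to run a standard $L^2$ energy estimate on the differentiated system \eqref{v3.1}--\eqref{v3.3}, exploiting the same dissipative/cancellation structure that underlies the base system \eqref{v1.5}--\eqref{v1.7}. First I would pair \eqref{v3.1} with $f_t$ in $L^2_{x,v}$: the transport term $v\cdot\nabla_x f_t$ vanishes by skew-symmetry, and the leading dissipation
\[
\int (1+\rho)\langle -\mathcal{L}f_t,f_t\rangle\,{\rm d}x \geq \lambda\bigl(\|\{\mathbf{I}-\mathbf{P}\}f_t\|_\nu^2 + \|b_t\|^2\bigr)
\]
follows from \eqref{v2.1} together with the smallness of $\|\rho\|_{L^\infty}$ provided by Theorem \ref{vt1.1}. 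Next I would pair \eqref{v3.3} with $u_t$ in $L^2_x$: the viscosity term produces $\lambda\|\nabla u_t\|^2$, the linear friction $-u_t(1+a)$ contributes $-\|u_t\|^2$ up to a small $\int a|u_t|^2$, and the source $b_t$ gives the cross term $+\int b_t\cdot u_t\,{\rm d}x$.

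To handle the apparent derivative loss in the pressure-density coupling, I would pair \eqref{v3.2} with the weighted multiplier $\frac{p'(1+\rho)}{(1+\rho)^2}\rho_t$. After integration by parts, the $\int \frac{p'(1+\rho)}{1+\rho}\rho_t\,\dv u_t\,{\rm d}x$ that appears from \eqref{v3.2} cancels (modulo an error $\lesssim\|\nabla\rho\|_{L^\infty}\|\rho_t\|\|u_t\|$) with the $\int\frac{p'(1+\rho)}{1+\rho}\nabla\rho_t\cdot u_t\,{\rm d}x$ arising from \eqref{v3.3}. Summing the three identities, the diagonal friction term $-\|u_t\|^2$, the particle dissipation $\|b_t\|^2$, and the pair of coupling terms $\int u_t\cdot b_t\,{\rm d}x$ that appear both from the $v\sqrt{M}$ source in \eqref{v3.1} and from the $b_t$ source in \eqref{v3.3} combine into the perfect square
\[
\|b_t\|^2 + \|u_t\|^2 - 2\!\int u_t\cdot b_t\,{\rm d}x = \|u_t-b_t\|^2,
\]
which is exactly the third dissipation on the left-hand side of \eqref{v3.4}. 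The equivalence $\|\rho_t\|^2 \sim \int \frac{p'(1+\rho)}{(1+\rho)^2}\rho_t^2\,{\rm d}x$ (valid since $\rho$ is small) lets us write the time derivative in the form stated.

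It then remains to estimate the nonlinear residuals. These break into three classes: (i) terms linear in $(\rho_t,u_t,f_t)$ with coefficients depending on $(\rho,u,f)$, e.g.\ $\rho\mathcal{L}f_t$, $(1+\rho)u\cdot(\nabla_v f_t-\tfrac{v}{2}f_t)$, $u\cdot\nabla u_t$, $u\cdot\nabla\rho_t$; (ii) terms linear in $\rho_t$ with coefficients involving gradients of $(\rho,u,f)$, e.g.\ $\rho_t\mathcal{L}f$, $\rho_t u\cdot(\nabla_v f-\tfrac{v}{2}f-v\sqrt{M})$, $\frac{\rho_t}{(1+\rho)^2}\Delta u$, $\gamma(\gamma-2)(1+\rho)^{\gamma-3}\rho_t\nabla\rho$; and (iii) terms involving $a_t,b_t$, e.g.\ $-ua_t$, which are already controlled by $\|f_t\|$ via the boundedness of the projection $\mathbf{P}$. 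Cauchy--Schwarz with the embedding $H^2\hookrightarrow L^\infty$ and the a priori smallness of $\|(\rho,u)\|_{H^2}$ and $\|f\|_{H^2_{x,v}}$ reduces class (i) either to absorbable pieces of the left-hand dissipation or to the generic $(1+\|\rho\|_{H^2}^2+\|u\|_{H^2}^2)(\cdots)(\|f_t\|^2+\|(\rho_t,u_t)\|_{H^2}^2)$ term in \eqref{v3.4}; class (ii) produces exactly the $C_\epsilon(\|\{\mathbf{I}-\mathbf{P}\}f\|_\nu^2+\|u-b\|^2+\|\nabla u\|_{H^1}^2)\|\rho_t\|^2$ coefficient, after an integration by parts in $x$ (or in $v$ for the $\mathcal{L}f$ piece) to distribute derivatives; class (iii) is absorbed with room to spare.

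The main obstacle is controlling class (ii): when $\rho_t$ multiplies something like $\mathcal{L}f$ or $\Delta u$, a naive Hölder estimate fails because the left-hand side of \eqref{v3.4} has no $\|\nabla\rho_t\|$ or $\|\nabla b_t\|$ dissipation. One must integrate by parts to shift a derivative onto $\rho_t$ and then use Young's inequality with a small parameter $\epsilon$; this produces exactly the $\epsilon\|\nabla(\rho_t,b_t)\|^2$ term that has to appear on the right, to be absorbed later against higher-order dissipation. Careful bookkeeping of which integration by parts shifts derivatives onto $\rho_t$ versus onto $\nabla\rho$ (the latter being uniformly small) is what delivers the precise form of \eqref{v3.4}.
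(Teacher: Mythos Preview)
Your plan is essentially the paper's own argument. Two small tactical points differ and are worth flagging.

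First, the paper multiplies \eqref{v3.2} by the \emph{unweighted} $\rho_t$, not by $\frac{p'(1+\rho)}{(1+\rho)^2}\rho_t$; this directly produces $\tfrac12\frac{\rm d}{{\rm d}t}\|\rho_t\|^2$ as written in \eqref{v3.4}. The pressure--density cross terms then cancel only up to an $O(\rho)$ remainder (see \eqref{v3.10}), which is harmless since $\rho$ is small. Your weighted multiplier gives exact cancellation but yields an energy that is only equivalent to the one in \eqref{v3.4}, plus a weight--time-derivative error; this is the route the paper takes for the higher-order estimate \eqref{v3.15}, not here.

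Second, your account of where $\epsilon\|\nabla(\rho_t,b_t)\|^2$ comes from is slightly off. In the critical term $\iint\rho_t\,\mathcal{L}f\,f_t\,{\rm d}x\,{\rm d}v$ there is no $x$-derivative available to shift onto $\rho_t$. What the paper does (see \eqref{v3.555}--\eqref{v3.6}) is perform a macro--micro split of both $f$ and $f_t$ in $v$; the dangerous remainders are trilinear integrals such as $\int\rho_t(u-b)\cdot b_t\,{\rm d}x$, in which one of $\rho_t,b_t$ must be placed in $L^\infty$ via Sobolev embedding, and it is this that forces the $\epsilon\|\nabla b_t\|_{H^1}^2$ loss. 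The $\rho_t\Delta u$ piece, by contrast, is controlled without any $\nabla\rho_t$ at all (see \eqref{v3.14}), using instead the $\|\nabla u_t\|^2$ dissipation already on the left.
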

\begin{proof}
First, multiplying \eqref{v3.1} by $f_t$, \eqref{v3.2} by $\rho_t$, and \eqref{v3.3} by $u_t$, respectively, and then integrating
over $\mathbb{R}^3$ and taking  summation, we obtain that
\begin{align}
& \frac{1}{2}\frac{\rm d}{{\rm d}t}\big(\|f_{t}\|^{2}+\|\rho_{t}\|^{2}+\|u_{t}\|^{2}\big)
+\lambda\|\{\mathbf{I}-\mathbf{P}\}f_{t}\|^{2}_{\nu}+\lambda\|\nabla u_{t}\|^{2}+\|u_{t}-b_{t}\|^{2}\nonumber\\
&\leq \iint \rho_{t}\mathcal{L}f  f_{t} \, {\rm d}x  {\rm d}v + \int\rho_{t}u\cdot b_{t}\, {\rm d}x
 -\iint\big(\rho_{t}u+(1+\rho)u_{t}\big)\cdot\Big(\nabla_{v}-\frac{v}{2}\Big)f f_{t}\, {\rm d}x  {\rm d}v
\nonumber\\
&\quad-\int a u^{2}_{t}\, {\rm d}x+\frac{1}{2}\iint(1+\rho)u\cdot vf^{2}_{t}\, {\rm d}x  {\rm d}v-\int a_{t}u\cdot u_{t}\, {\rm d}x +\int\rho(u_{t}-b_{t})\cdot b_{t}\, {\rm d}x \nonumber\\
&\quad-\int u_{t}\cdot\nabla \rho \rho_{t}\, {\rm d}x +\gamma(\gamma-2)\int(1+\rho)^{\gamma-3}\rho_{t}u_{t}\cdot \nabla\rho \, {\rm d}x \nonumber\\
&\quad-\int(1+\rho)\ \dv u_{t}\rho_{t}\, {\rm d}x - \int\frac{p'(1+\rho)}{1+\rho}u_{t}\cdot \nabla \rho_{t}\, {\rm d}x \nonumber\\
&\quad-\int u\cdot \nabla \rho_{t} \rho_{t}\, {\rm d}x -\int \rho^{2}_{t}\ \dv u \, {\rm d}x
-\int u_{t}\cdot \nabla u\cdot u_{t}\, {\rm d}x -\int u\cdot \nabla u_{t}\cdot u_{t}\, {\rm d}x \nonumber\\
&\quad+\int\frac{1}{(1+\rho)^{2}}\nabla\rho \cdot \nabla u_{t}\cdot u_{t}\, {\rm d}x -\int\frac{1}{(1+\rho)^{2}}\rho_{t}\Delta u \cdot u_{t}\, {\rm d}x. \label{v3.5}
\end{align}
Next, we estimate each term on the right hand side  of \eqref{v3.5}.
For the first two terms, we rewrite them as follows:
\begin{align}
&\!\!\!\!\!\!\!\!\!\!\!\!\!\!\!\!\iint\rho_{t}\mathcal{L}f  f_{t} \, {\rm d}x  {\rm d}v + \int\rho_{t}u\cdot b_{t}\, {\rm d}x  \nonumber\\
&=-\iint\rho_{t} \Big(\nabla_{v}+\frac{v}{2}\Big)\{\mathbf{I}-\mathbf{P}\}f\Big(\nabla_{v}+\frac{v}{2}\Big)\{\mathbf{I}-\mathbf{P}\}f_{t}\, {\rm d}x  {\rm d}v\nonumber\\
&\quad-\iint\rho_{t} \Big(\nabla_{v}+\frac{v}{2}\Big)\{\mathbf{I}-\mathbf{P}\}f\Big(\nabla_{v}+\frac{v}{2}\Big)\mathbf{P}f_{t}\, {\rm d}x  {\rm d}v\nonumber\\
&\quad-\iint\rho_{t} \Big(\nabla_{v}+\frac{v}{2}\Big)\mathbf{P}f\Big(\nabla_{v}+\frac{v}{2}\Big)\{\mathbf{I}-\mathbf{P}\}f_{t}\, {\rm d}x  {\rm d}v\nonumber\\
&\quad-\iint\rho_{t} \Big(\nabla_{v}+\frac{v}{2}\Big)\mathbf{P}f\Big(\nabla_{v}+\frac{v}{2}\Big)\mathbf{P}f_{t}\, {\rm d}x  {\rm d}v
+\int\rho_{t}u\cdot b_{t}\, {\rm d}x  \nonumber\\
&:= \Pi_1+\Pi_2+\Pi_3+\Pi_4.\label{v3.555}
\end{align}
For the term $\Pi_1$, one has
\begin{align}
\Pi_1\,&\leq C\|\rho_{t}\|\int\sum_{|\alpha|\leq 2}\Big\|\Big(\nabla_{v}+\frac{v}{2}\Big)\partial^{\alpha}_{x}\{\mathbf{I}-\mathbf{P}\}f\Big\|_{L^{2}_{x}}
\Big\|\Big(\nabla_{v}+\frac{v}{2}\Big)\{\mathbf{I}-\mathbf{P}\}f_{t}\Big\|_{L^{2}_{x}} {\rm d}v\nonumber\\
&\leq C\|\rho_{t}\| \sum_{|\alpha|\leq 2}\|\partial^{\alpha}_{x}\{\mathbf{I}-\mathbf{P}\}f\|_{\nu}\|\{\mathbf{I}-\mathbf{P}\}f_{t}\|_{\nu}\nonumber\\
&\leq\epsilon \|\{\mathbf{I}-\mathbf{P}\}f_{t}\|^{2}_{\nu}+C_{\epsilon}\|\rho_{t}\|^{2} \sum_{|\alpha|\leq 2}
\|\partial^{\alpha}_{x}\{\mathbf{I}-\mathbf{P}\}f\|^{2}_{\nu}. \nonumber
\end{align}
By the definition of $\mathbf{P}$  and noticing the equality $\big(\nabla_{v}+\frac{v}{2}\big)g=b^{g}\sqrt{M}$ for any $g,$ one has
\begin{align}
\Pi_2\,&=\iint\rho_{t} \Big(\nabla_{v}+\frac{v}{2}\Big)\{\mathbf{I}-\mathbf{P}\}f\cdot b_{t}\sqrt{M}\, {\rm d}x  {\rm d}v\nonumber\\
& \leq C\|\nabla b_{t}\|_{H^{1}}\|\rho_{t}\|\|\{\mathbf{I}-\mathbf{P}\}f\|_{\nu}\nonumber\\
&\leq\epsilon \|\nabla b_{t}\|^{2}_{H^{1}}+C_{\epsilon}\|\{\mathbf{I}-\mathbf{P}\}f\|^{2}_{\nu}\|\rho_{t}\|^{2}\nonumber
\end{align}
for any $\epsilon >0.$
Similarly, for the terms $\Pi_3$ and $\Pi_4$, we have
\begin{align}
\Pi_3\,& =\iint\rho_{t} \Big(\nabla_{v}+\frac{v}{2}\Big)\{\mathbf{I}-\mathbf{P}\}f\cdot b_{t}\sqrt{M}\, {\rm d}x  {\rm d}v\nonumber\\
& \leq C\|\nabla b_{t}\|_{H^{1}}\|\rho_{t}\|\|\{\mathbf{I}-\mathbf{P}\}f\|_{\nu}\nonumber\\
\,&\leq\epsilon \|\{\mathbf{I}-\mathbf{P}\}f_{t}\|^{2}_{\nu}+C_{\epsilon}\|\nabla b\|^{2}_{H^{1}}\|\rho_{t}\|^{2},\nonumber\\
\Pi_4\, & =\int \rho_{t}(u-b)\cdot b_{t}\, {\rm d}x  \nonumber\\
& \leq C\|\nabla b_{t}\|_{H^{1}}\|u-b\| \|\rho_{t}\|\nonumber\\
& \leq \epsilon \|\nabla b_{t}\|^{2}_{H^{1}}+C_{\epsilon}\|u-b\|^{2} \|\rho_{t}\|^{2} \nonumber
\end{align}
for any $\epsilon >0.$
 Plugging the above estimates  into \eqref{v3.555}, we can rewrite it as
 \begin{align}
&\iint\rho_{t}\mathcal{L}f  f_{t} \, {\rm d}x  {\rm d}v + \int\rho_{t}u\cdot b_{t}\, {\rm d}x
\leq\epsilon\big(\|\nabla b_{t}\|^{2}_{H^{1}}+\|\{\mathbf{I}-\mathbf{P}\}f_{t}\|^{2}_{\nu}\big)\nonumber\\
&\qquad+C_{\epsilon}\big(\sum_{|\alpha|\leq 2}\|\partial^{\alpha}\{\mathbf{I}-\mathbf{P}\}f\|^{2}_{\nu}+\|u-b\|^{2}+\|\nabla b\|^{2}_{H^{1}}\big)\|\rho_{t}\|^{2}\label{v3.6}
\end{align}
for any $\epsilon >0.$

For the third and fourth terms on the right hand side of \eqref{v3.5}, we have
\begin{align}
&\!\!\!\!\!\!\!\!\!\!\!\!\!\!\!\! -\iint\big(\rho_{t}u+(1+\rho)u_{t}\big)\cdot\Big(\nabla_{v}-\frac{v}{2}\Big)f  f_{t}\, {\rm d}x  {\rm d}v-\int au^{2}_{t}\, {\rm d}x \nonumber\\
&=\int\big(\rho_{t}u+(1+\rho)u_{t}\big)f \Big(\nabla_{v}+\frac{v}{2}\Big)\{\mathbf{I}-\mathbf{P}\}f_{t}\, {\rm d}x  {\rm d}v\nonumber\\
&\quad+\int\big(\rho_{t}u+(1+\rho)u_{t}\big)f \Big(\nabla_{v}+\frac{v}{2}\Big)\mathbf{P}f_{t}\, {\rm d}x  {\rm d}v-\int au^{2}_{t}\, {\rm d}x.\label{v3.77}
\end{align}
We have the following estimates for the right hand side terms in \eqref{v3.77}:
\begin{align}
&\quad \iint\big(\rho_{t}u+(1+\rho)u_{t}\big)f \Big(\nabla_{v}+\frac{v}{2}\Big)\{\mathbf{I}-\mathbf{P}\}f_{t}\, {\rm d}x  {\rm d}v\nonumber\\
&\leq C\int \big(\|uf\|_{L^{\infty}_{x}}\|\rho_{t}\|+\|f\|_{L^{\infty}_{x}}(1+\|\rho\|_{L^{\infty}})\|u_{t}\|\big)\|\Big(\nabla_{v}+\frac{v}{2}\Big)
\{\mathbf{I}-\mathbf{P}\}f_{t}\| {\rm d}v\nonumber\\
&\leq C(1+\|\rho\|_{H^{2}}+\|u\|_{H^{2}})\|(\rho_{t},u_{t})\|\|\nabla_{x}f\|_{L^{2}_{v}(H^{1}_{x})}\|\{\mathbf{I}-\mathbf{P}\}f_{t}\|_{\nu}\nonumber\\
&\leq\epsilon\|\{\mathbf{I}-\mathbf{P}\}f_{t}\|^{2}_{\nu}
+C_{\epsilon}(1+\|(\rho,u)\|^{2}_{H^{2}})\nonumber\\
&\quad\times\big(\sum_{|\alpha|\leq 2}\|\partial^{\alpha}_{x}\{\mathbf{I}-\mathbf{P}\}f\|^{2}
+\sum_{|\alpha|\leq 1}\|\nabla_{x}(a,b)\|^{2}\big)\|(\rho_{t},u_{t})\|^{2},\nonumber\\
&\quad \iint\big(\rho_{t}u+(1+\rho)u_{t}\big)f \Big(\nabla_{v}+\frac{v}{2}\Big)\mathbf{P}f_{t}\, {\rm d}x  {\rm d}v-\int au^{2}_{t}\, {\rm d}x \nonumber\\
&=\int\big(\rho_{t}u+\rho u_{t}\big)a b_{t}\, {\rm d}x  +\int au_{t}\cdot (b_{t}-u_{t})\, {\rm d}x \nonumber\\
&\leq \epsilon\|u_{t}-b_{t}\|^{2}+C_{\epsilon}\|\nabla a\|^{2}_{H^{1}}(\|f_{t}\|^{2}+\|u_{t}\|^{2})
+C_{\epsilon}\|(\nabla \rho,\nabla u)\|^{2}_{H^{1}}(\|\rho_{t}\|^{2}+\|u_{t}\|^{2}) \nonumber
\end{align}
for any $\epsilon >0.$ Plugging   the above estimates into \eqref{v3.77} gives
 \begin{align}
&\!\!\!\!\!\!\!\!\!\!\!\!\!\!\!\! -\iint\big(\rho_{t}u+(1+\rho)u_{t}\big)\Big(\nabla_{v}-\frac{v}{2}\Big)f \cdot f_{t}\, {\rm d}x  {\rm d}v-\int au^{2}_{t}\, {\rm d}x \nonumber\\
&\leq\epsilon\big(\|u_{t}-b_{t}\|^{2}+\|\{\mathbf{I}-\mathbf{P}\}f_{t}\|^{2}_{\nu}\big)
+C_{\epsilon}(1+\|\rho\|^{2}_{H^{2}}+\|u\|^{2}_{H^{2}})\nonumber\\
&\quad\times\Big\{\sum_{|\alpha|\leq 2}\|\partial^{\alpha}\{\mathbf{I}-\mathbf{P}\}f\|^{2}
+\|\nabla (a,b,\rho,u)\|^{2}_{H^{1}}\Big\}\big(\|f_{t}\|^{2}+\|(\rho_{t},u_{t})\|^{2}\big).\label{v3.7}
\end{align}

The remaining  terms on right hand side of \eqref{v3.5}  can be treated easily. Hence we only list the bounds below and omit the details for brevity.
We have \begin{align}
&\iint\frac{1}{2}(1+\rho)u\cdot v f^{2}_{t}\, {\rm d}x  {\rm d}v-\int a_{t}u\cdot u_{t}\, {\rm d}x \nonumber\\
&\qquad\qquad\leq \epsilon \big(\|u_{t}-b_{t}\|^{2}+\|\{\mathbf{I}-\mathbf{P}\}f_{t}\|^{2}\big)
+C_{\epsilon}\big\{(1+\|\rho\|^{2}_{H^{2}})\|\nabla u\|^{2}_{H^{1}}\nonumber\\
&\qquad\qquad\quad+\|\nabla \rho\|_{H^{1}}\|\nabla u\|_{H^{1}}\big\}\|f_{t}\|^{2}
+ C(1+\|\rho\|_{H^{2}})\|u\|_{H^{2}}\|\{\mathbf{I}-\mathbf{P}\}f_{t}\|^{2}_{\nu},\label{v3.8}\\
&\int\rho (u_{t}-b_{t})\cdot b_{t} \, {\rm d}x  \leq\epsilon\|u_{t}-b_{t}\|^{2}+C_{\epsilon}\|\nabla \rho\|^{2}_{H^{1}}\|f_{t}\|^{2},\label{v3.9}\\
&\int(1+\rho)\ \dv u_{t}\rho_{t}\, {\rm d}x  +\int\frac{p'(1+\rho)}{1+\rho}u_{t}\cdot \nabla \rho_{t}\, {\rm d}x \nonumber\\
&\qquad\qquad\leq \epsilon \|\nabla u_{t}\|^{2}+C_{\epsilon} \|\nabla\rho\|^{2}_{H^{1}}\|\rho_{t}\|^{2},\label{v3.10}\\
&\int u\cdot \nabla\rho_{t} \rho_{t}\, {\rm d}x +\int \rho^{2}_{t}\ \dv u\, {\rm d}x \leq \epsilon \|\nabla \rho_{t}\|^{2}
+C_{\epsilon} \|\nabla u\|^{2}_{H^{1}}\|\rho_{t}\|^{2},\label{v3.11}\\
&\int\rho_{t}u_{t}\cdot \nabla\rho \, {\rm d}x  -\gamma(\gamma-2)\int(1+\rho)^{\gamma-3}\rho_{t}u_{t}\cdot \nabla \rho \, {\rm d}x \nonumber\\
&\qquad\qquad\leq\epsilon \|\nabla u_{t}\|^{2}+C_{\epsilon} \|\nabla\rho\|^{2}_{H^{1}}\|\rho_{t}\|^{2},\label{v3.12}\\
&\int u_{t}\cdot \nabla u\cdot u_{t}\, {\rm d}x +\int u\cdot\nabla u_{t}\cdot u_{t}\, {\rm d}x
\leq\epsilon \|\nabla u_{t}\|^{2}+C_{\epsilon} \|\nabla u\|^{2}_{H^{1}}\|u_{t}\|^{2},\label{v3.13}\\
&\int\frac{1}{(1+\rho)^{2}}\nabla \rho\cdot \nabla u_{t}\cdot u_{t}\, {\rm d}x -\int\frac{1}{(1+\rho)^{2}}\rho_{t}\Delta u_{t}\cdot u_{t}\, {\rm d}x \nonumber\\
&\qquad\qquad\leq\epsilon \|\nabla u_{t}\|^{2}+C_{\epsilon}\|(\nabla \rho,\nabla u)\|^{2}_{H^{2}}\|(\rho_{t},u_{t})\|^{2}.\label{v3.14}
\end{align}
Plugging all the above estimates \eqref{v3.6}, \eqref{v3.7}, and \eqref{v3.8}-\eqref{v3.14} into \eqref{v3.5} and then choosing $\epsilon$ sufficiently small yield   \eqref{v3.4}.
\end{proof}
\begin{Proposition}\label{vl3.3}
Assume that $(f,\rho,u)$ is the solution obtained in Theorem \ref{vt1.1}. Then, we have
\begin{align}
&\quad \frac{1}{2}\frac{\rm d}{{\rm d}t}\sum_{1\leq|\alpha|\leq3}\bigg\{\|\partial^{\alpha}f_{t}\|^{2}+\Big\|\frac{\sqrt{p'(1+\rho)}}{1+\rho}
\partial^{\alpha}\rho_{t}\Big\|^{2}+\|\partial^{\alpha}u_{t}\|^{2}\bigg\}\nonumber\\
&\quad+\lambda\sum_{1\leq|\alpha|\leq3}\big\{\|\partial^{\alpha}\{\mathbf{I}-\mathbf{P}\}f_{t}\|^{2}_{\nu}+
\|\partial^{\alpha}(u_{t}-b_{t})\|^{2}+\|\nabla\partial^{\alpha}u_{t}\|^{2}\big\}\nonumber\\
&\leq \epsilon \big(\|\nabla a_{t}\|^{2}_{H^{1}}+\|\nabla b_{t}\|^{2}_{H^{2}}\big)
+C_{\epsilon}\big(1+\|\rho\|^{^{2}}_{H^{3}}\big)\|\nabla u\|^{2}_{H^{2}}\|f_{t}\|^{2}_{L^{2}_{v}(H^{3}_{x})}\nonumber\\
&\quad+C_{\epsilon}\bigg\{\sum_{|\alpha|\leq 3}\|\partial^{\alpha}\{\mathbf{I}-\mathbf{P}\}f\|^{2}_{\nu}
+\|\nabla_{x}(a,b,\rho,u)\|^{2}_{H^{2}}\bigg\}\|(\rho_{t},u_{t})\|^{2}_{H^{3}}\nonumber\\
&\quad+C\|\rho\|_{H^{3}}\|\nabla (a_{t},b_{t})\|^{2}_{H^{2}}+C\big(1+\|\rho\|_{H^{3}}\big)\|u\|_{H^{4}}\|\nabla \rho_{t}\|^{2}_{H^{2}}\nonumber\\
&\quad+C\big\{(1+\|\rho\|_{H^{2}})\|u\|_{H^{^{2}}}+\|\rho\|_{H^{3}}\big\}\sum_{1\leq|\alpha|\leq3}\|\partial^{\alpha}\{\mathbf{I}-\mathbf{P}\}f_{t}\|^{2}_{\nu}\nonumber\\
&\quad+C\bigg\{\sum_{|\alpha|\leq 3}\|\partial^{\alpha}\{\mathbf{I}-\mathbf{P}\}f\|^{2}_{\nu}
+\|\nabla (a,b,\rho)\|^{2}_{H^{2}}+(1+\|\rho\|^{2}_{H^{3}})\|\nabla u\|^{2}_{H^{2}}\bigg\}\|f_{t}\|^{2}_{L^{2}_{v}(H^{3}_{x})}\nonumber\\
&\quad+C\bigg\{\sum_{|\alpha|\leq 3}\|\partial^{\alpha}\{\mathbf{I}-\mathbf{P}\}f\|^{2}_{\nu}+\|\nabla(a,b,\rho,u)\|^{2}_{H^{3}}
+\|\nabla u\|^{2}_{H^{4}}\nonumber\\
&\quad+\|\nabla \rho\|^{3}_{H^{3}}+\|\rho\|^{4}_{H^{3}}\|\nabla \rho\|^{2}_{H^{2}}\bigg\}\|(\rho_{t},u_{t})\|^{2}_{H^{3}}\label{v3.15}
\end{align}
for  $\epsilon >0$ sufficiently small.
\end{Proposition}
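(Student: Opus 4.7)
The plan is to mirror the energy argument of Proposition \ref{vl3.2} but at the level of spatial derivatives $\partial^\alpha$ with $1\leq|\alpha|\leq 3$. First I would apply $\partial^\alpha$ to each of \eqref{v3.1}--\eqref{v3.3}, then take the $L^2$ inner products with $\partial^\alpha f_t$, with $\frac{p'(1+\rho)}{(1+\rho)^2}\partial^\alpha \rho_t$, and with $\partial^\alpha u_t$ respectively. The weighted test function for the $\rho_t$-equation is chosen so that the terms $\int \partial^\alpha[(1+\rho)\,\mathrm{div}\, u_t]\,\partial^\alpha \rho_t$ and $\int \partial^\alpha\!\bigl[\frac{p'(1+\rho)}{1+\rho}\nabla\rho_t\bigr]\cdot\partial^\alpha u_t$ cancel up to commutators, a standard device for compressible Navier--Stokes; the time derivative of the weight itself produces a harmless contribution $\int \partial_t\bigl(\frac{p'(1+\rho)}{(1+\rho)^2}\bigr)|\partial^\alpha \rho_t|^2$ bounded by $\|\rho_t\|_{H^2}\|\rho_t\|_{H^3}^2$, which is absorbed into the last block on the right of \eqref{v3.15}.

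Second, I would invoke the coercivity $\langle -\mathcal{L}g,g\rangle\geq \lambda_0|\{\mathbf{I}-\mathbf{P}\}g|_\nu^2+|b^g|^2$ from \eqref{v2.1} applied to $\partial^\alpha f_t$, combined with the friction term $-u_t(1+a)+b_t$ in \eqref{v3.3}, to generate simultaneously the dissipations $\|\partial^\alpha\{\mathbf{I}-\mathbf{P}\}f_t\|_\nu^2$, $\|\partial^\alpha(u_t-b_t)\|^2$ and, via the Laplacian, $\|\nabla \partial^\alpha u_t\|^2$ on the left-hand side. The $\mathbf{P}$-part of $\partial^\alpha f_t$ is split off using the macro--micro decomposition \eqref{decomp} so that every appearance of $(\nabla_v+v/2)\mathbf{P}\partial^\alpha f_t$ is rewritten as $\partial^\alpha b_t\sqrt{M}$, and then absorbed either into the $\epsilon$-buffer $\|\nabla b_t\|_{H^2}^2$ on the right-hand side of \eqref{v3.15} or into $\|\partial^\alpha(u_t-b_t)\|^2$ after adding and subtracting $u_t$.

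Third, every product term arising from $\partial^\alpha$ acting on nonlinearities (through the Leibniz rule $\partial^\alpha(gh)=\sum_{\beta\leq\alpha}\binom{\alpha}{\beta}\partial^\beta g\,\partial^{\alpha-\beta}h$) is estimated by placing the factor carrying the fewest derivatives in $L^\infty$ via the Sobolev embedding $H^2(\mathbb{R}^3)\hookrightarrow L^\infty(\mathbb{R}^3)$, so that the remaining factors combine to $\|(\rho_t,u_t)\|_{H^3}^2$ times a background coefficient of the form appearing in \eqref{v3.15}. Terms of type $\partial^\alpha(\rho_t\,\mathcal{L}f)$ are handled by integrating by parts in $v$ to transfer $(\nabla_v+v/2)$ onto one $\{\mathbf{I}-\mathbf{P}\}f$ factor. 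The commutator $[\partial^\alpha,\,\tfrac{1}{1+\rho}\Delta]u_t$ and the analogous one for the pressure are controlled exactly as in the proof of \eqref{v3.4}, so I would invoke Lemmas \ref{vl2.2}--\ref{vl2.6} and the estimates \eqref{v3.6}--\eqref{v3.14} rather than redoing them.

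The main obstacle will be the top-order cross terms where $\partial^\alpha$ falls entirely on the background solution $(f,\rho,u)$, for example $\int \rho_t\,\partial^\alpha\mathcal{L}f\,\partial^\alpha f_t$ and $\int \rho_t\,\partial^\alpha \Delta u\cdot \partial^\alpha u_t/(1+\rho)^2$. In these terms I cannot put $\partial^\alpha$-quantities in $L^\infty$, so I must put $\rho_t$ in $L^\infty$ through $\|\rho_t\|_{H^2}$ and pay the background factor $\sum_{|\alpha|\leq 3}\|\partial^\alpha\{\mathbf{I}-\mathbf{P}\}f\|_\nu^2+\|\nabla(a,b,\rho,u)\|_{H^2}^2$ multiplying $\|(\rho_t,u_t)\|_{H^3}^2$, matching exactly the fourth and last blocks of \eqref{v3.15}. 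After collecting all contributions and choosing the $\epsilon$'s generated by each Cauchy--Schwarz small enough to absorb $\epsilon\|\nabla u_t\|_{H^3}^2$, $\epsilon\|\partial^\alpha\{\mathbf{I}-\mathbf{P}\}f_t\|_\nu^2$ and $\epsilon\|\partial^\alpha(u_t-b_t)\|^2$ into the dissipation on the left, summing over $1\leq|\alpha|\leq 3$ yields \eqref{v3.15}.
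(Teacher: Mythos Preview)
Your proposal is correct and follows essentially the same approach as the paper: apply $\partial^\alpha$ to the time-differentiated system \eqref{v3.1}--\eqref{v3.3}, test against $\partial^\alpha f_t$, $\frac{p'(1+\rho)}{(1+\rho)^2}\partial^\alpha\rho_t$, $\partial^\alpha u_t$, use the macro--micro splitting to extract the dissipations, and control all Leibniz remainders by product estimates and Young's inequality. One small correction: the contribution $\int\partial_t\bigl(\tfrac{p'(1+\rho)}{(1+\rho)^2}\bigr)|\partial^\alpha\rho_t|^2\,{\rm d}x$ should not be left as $\|\rho_t\|_{H^2}\|\rho_t\|_{H^3}^2$ but rather converted, via the continuity equation \eqref{v1.6}, into the form $C(1+\|\rho\|_{H^3})\|u\|_{H^4}\|\nabla\rho_t\|_{H^2}^2$ that appears explicitly as its own term in \eqref{v3.15}; it does not fit the structure of the ``last block'' as you described.
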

\begin{proof}
Applying  $\partial ^\alpha (1\leq|\alpha|\leq 3)$ to  \eqref{v3.1}-\eqref{v3.3},
multiplying the results by $\partial^{\alpha}f_{t}$,  $\frac{p'(1+\rho)}{(1+\rho)^{2}}\partial^{\alpha}\rho_{t}$, and $\partial^{\alpha}u_{t}$  respectively, and then adding them together, we obtain 
\begin{align}
&  \frac{1}{2}\frac{\rm d}{{\rm d}t}\Big(\|\partial^{\alpha}f_{t}\|^{2}+\|\frac{\sqrt{p'(1+\rho)}}{1+\rho}
\partial^{\alpha}\rho_{t}\|^{2}+\|\partial^{\alpha}u_{t}\|^{2}\Big)\nonumber\\
& +\int\langle -\mathcal{L}\{\mathbf{I}-\mathbf{P}\}\partial^{\alpha}f_{t},\partial^{\alpha}f_{t}\rangle   \, {\rm d}x +
\|\partial^{\alpha}(u_{t}-b_{t})\|^{2}+\int\frac{1}{1+\rho}|\nabla\partial^{\alpha}u_{t}|^{2}\, {\rm d}x \nonumber\\
=\,&\iint\partial^{\alpha}\Big((\rho_{t}u+(1+\rho)u_{t})\frac{v}{2}f\Big)\partial^{\alpha}f_{t}\, {\rm d}x  {\rm d}v
-\iint\partial^{\alpha}\big((\rho_{t}u+(1+\rho)u_{t})\cdot\nabla_{v}f\big)\partial^{\alpha}f_{t}\, {\rm d}x  {\rm d}v
\nonumber\\
&+\iint\partial^{\alpha}\Big((1+\rho)u\cdot \frac{v}{2}f_{t}\Big)\partial^{\alpha}f_{t}\, {\rm d}x  {\rm d}v
-\int\partial^{\alpha}(ua_{t})\cdot\partial^{\alpha}u_{t}\, {\rm d}x \nonumber\\
&-\int[-\partial^{\alpha},(1+\rho)u\cdot \nabla_{v}]f_{t}\partial^{\alpha}f_{t}\, {\rm d}x
+\int\partial^{\alpha}(\rho_{t}\mathcal{L}f )\partial^{\alpha}f_{t}\, {\rm d}x
\nonumber\\
&-\int\partial^{\alpha}(\rho \mathcal{L}f _{t})\partial^{\alpha}f_{t}\, {\rm d}x
-\int\frac{p'(1+\rho)}{(1+\rho)^{2}}u\cdot \nabla \partial^{\alpha}\rho_{t}\partial^{\alpha}\rho_{t}\, {\rm d}x
\nonumber\\
&+\int\partial^{\alpha}\big(\rho_{t}u+\rho u_{t}\big)\cdot\partial^{\alpha}b_{t}\, {\rm d}x
+\int \partial^{\alpha}u_{t}\cdot\nabla\frac{p'(1+\rho)}{1+\rho}\partial^{\alpha}\rho_{t}\, {\rm d}x
\nonumber\\
&+\int[-\partial^{\alpha},\rho\nabla_{x}\cdot]u_{t}\frac{p'(1+\rho)}{(1+\rho)^{2}}\partial^{\alpha}\rho_{t}\, {\rm d}x
+\int[-\partial^{\alpha},u\cdot\nabla_{x}]\rho_{t}\frac{p'(1+\rho)}{(1+\rho)^{2}}\partial^{\alpha}\rho_{t}\, {\rm d}x \nonumber\\
&+\frac{1}{2}\int\partial_{t}\big(\frac{p'(1+\rho)}{(1+\rho)^{2}}\big)|\partial^{\alpha}\rho_{t}|^{2}\, {\rm d}x
-\int\partial^{\alpha}\big(\rho_{t}\ \dv u+\nabla\rho\cdot u_{t}\big)\frac{p'(1+\rho)}{(1+\rho)^{2}}\partial^{\alpha}\rho_{t}\, {\rm d}x \nonumber\\
&-\int u\cdot \nabla \partial^{\alpha}u_{t}\cdot \partial^{\alpha}u_{t}\, {\rm d}x
-\int\nabla\frac{1}{1+\rho}\nabla\partial^{\alpha}u_{t}\cdot \partial^{\alpha}u_{t}\, {\rm d}x
-\int\partial^{\alpha}(u_{t}\cdot \nabla u)\cdot \partial^{\alpha}u_{t}\, {\rm d}x \nonumber\\
&-\sum_{1\leq\gamma\leq\alpha}C_{\alpha,\gamma}\int\partial^{\gamma}(\frac{1}{1+\rho})
\partial^{\alpha-\gamma}\Delta u_{t}\cdot\partial^{\alpha}u_{t}\, {\rm d}x
-\int\partial^{\alpha}\big(\frac{1}{(1+\rho)^{2}}\rho_{t}\Delta u\big)\cdot\partial^{\alpha}u_{t}\, {\rm d}x \nonumber\\
&+\int[-\partial^{\alpha},u\cdot\nabla]u_{t}\cdot\partial^{\alpha}u_{t}\, {\rm d}x
+\int\Big[-\partial^{\alpha},\frac{p'(1+\rho)}{1+\rho}\nabla\Big]\rho_{t}\cdot\partial^{\alpha}u_{t}\, {\rm d}x \nonumber\\
&-\gamma(\gamma-2)\int\partial^{\alpha}\big((1+\rho)^{\gamma-3}\rho_{t}\nabla\rho\big)\cdot\partial^{\alpha}u_{t}\, {\rm d}x
-\int\partial^{\alpha}(au_{t})\cdot\partial^{\alpha}u_{t}\, {\rm d}x . \label{v3.16}
\end{align}
Now we estiamte the terms on the right hand side of \eqref{v3.16}. First, we have
\begin{align}
&\iint\partial^{\alpha}\big((\rho_{t}u+\rho u_{t})\cdot \nabla_{v}f\big)\partial^{\alpha}f_{t}\, {\rm d}x  {\rm d}v
+\int\partial^{\alpha}\Big((\rho_{t}u+\rho u_{t})\cdot \frac{v}{2}f\Big)\partial^{\alpha}f_{t}\, {\rm d}x  {\rm d}v\nonumber\\
&\leq \int \big(\|\nabla u\|_{H^{2}}\|\rho_{t}\|_{H^{3}}+\|\nabla \rho\|_{H^{2}}\|u_{t}\|_{H^{3}}\big)
\Big\|\Big(\nabla_{v}+\frac{v}{2}\Big)\nabla_{x}f\Big\|_{H^{2}_{x}}\|\partial^{\alpha}f_{t}\|_{L^{2}_{x}} {\rm d}v\nonumber\\
&\leq C\big(\|\nabla u\|_{H^{2}}\|\rho_{t}\|_{H^{3}}+\|\nabla \rho\|_{H^{2}}\|u_{t}\|_{H^{3}}\big)
\Big\|\Big(\nabla_{v}+\frac{v}{2}\Big)\nabla_{x}f\Big\|_{L^{2}(H^{2}_{x})}\|\partial^{\alpha}f_{t}\|\nonumber\\
&\leq C \Big\{\sum_{|\alpha|\leq 3}\|\partial^{\alpha}\{\mathbf{I}-\mathbf{P}\}f\|^{2}_{\nu}+\|\nabla (a,b)\|^{2}_{H^{2}}\Big\}\|\partial^{\alpha}f_{t}\|^{2}
+C\|(\nabla \rho,\nabla u)\|^{2}_{H^{2}}\|(\rho_{t},u_{t})\|^{2}_{H^{3}},\nonumber\\
&\iint\partial^{\alpha}(u_{t}\Big(\nabla_{v}+\frac{v}{2}\Big)f)\partial^{\alpha}f_{t}\, {\rm d}x  {\rm d}v\nonumber\\
&\leq C\|\nabla u_{t}\|_{H^{2}}\Big\|\Big(\nabla_{v}+\frac{v}{2}\Big)\nabla_{x}f\Big\|_{L^{2}(H^{2}_{x})}\|\partial^{\alpha}f_{t}\|\nonumber\\
&\leq \eta\|\nabla u_{t}\|^{2}_{H^{2}}
+C_{\eta}\Big\{\sum_{|\alpha|\leq 3}\|\partial^{\alpha}\{\mathbf{I}-\mathbf{P}\}f\|^{2}_{\nu}+\|\nabla (a,b)\|^{2}_{H^{2}}\Big\}\|\partial^{\alpha}f_{t}\|^{2}\nonumber
\end{align}
with $\eta$ small enough. Thus the first two terms on the right hand side can be bounded by
\begin{align}
\Big\{\sum_{|\alpha|\leq 3}&\|\partial^{\alpha}\{\mathbf{I}-\mathbf{P}\}f\|^{2}_{\nu}+\|\nabla (a,b)\|^{2}_{H^{2}}\Big\}\|\partial^{\alpha}f_{t}\|^{2}
+C\|(\nabla \rho,\nabla u)\|^{2}_{H^{2}}\|(\rho_{t},u_{t})\|^{2}_{H^{3}}\nonumber\\
&+\eta\|\nabla u_{t}\|^{2}_{H^{2}}
+C_{\eta}\Big\{\sum_{|\alpha|\leq 3}\|\partial^{\alpha}\{\mathbf{I}-\mathbf{P}\}f\|^{2}_{\nu}+\|\nabla (a,b)\|^{2}_{H^{2}}\Big\}
\|\partial^{\alpha}f_{t}\|^{2}.\label{v3.17}
\end{align}

For the third and forth terms on the right hand side of \eqref{v3.16}, we can rewrite them as
\begin{align}
&\iint\partial^{\alpha}\Big((1+\rho)u\cdot \frac{v}{2}f_{t}\Big)\partial^{\alpha}f_{t}\, {\rm d}x  {\rm d}v
-\int\partial^{\alpha}(ua_{t})\cdot\partial^{\alpha}u_{t}\, {\rm d}x \nonumber\\
=\,&\iint(1+\rho)u\cdot \frac{v}{2}\partial^{\alpha}f_{t}\partial^{\alpha}f_{t}\, {\rm d}x  {\rm d}v-\int \partial^{\alpha}a_{t}u\cdot\partial^{\alpha} u_{t}\, {\rm d}x \nonumber\\
&+\sum_{1\leq\gamma\leq\alpha}C_{\alpha,\gamma}\Big(\iint\partial^{\gamma}\big((1+\rho)u\big)\cdot \frac{v}{2}\partial^{\alpha-\gamma}f_{t}\partial^{\alpha}f_{t}\, {\rm d}x  {\rm d}v
-\int\partial^{\alpha-\gamma}a_{t}\partial^{\gamma} u\cdot\partial^{\alpha}u_{t}\, {\rm d}x \Big)\nonumber\\
=\,&\frac{1}{2}\int(1+\rho)u\langle v, |\{\mathbf{I}-\mathbf{P}\}\partial^{\alpha}f_{t}|^{2}\rangle   \, {\rm d}x
+\int \partial^{\alpha}a_{t}u\cdot\partial^{\alpha}(b_{t}-u_{t})\, {\rm d}x \nonumber\\
&+\int (1+\rho)u\cdot \langle v\mathbf{P}\partial^{\alpha}f_{t},\{\mathbf{I}-\mathbf{P}\}\partial^{\alpha}f_{t}\rangle   \, {\rm d}x
 +\int\rho \partial^{\alpha}a_{t}u\cdot \partial^{\alpha}b_{t}\, {\rm d}x \nonumber\\
& +\sum_{1\leq\gamma\leq\alpha}C_{\alpha,\gamma}\Big(\iint\partial^{\gamma}\big((1+\rho)u\big)\cdot \frac{v}{2}\partial^{\alpha-\gamma}f_{t}\partial^{\alpha}f_{t}\, {\rm d}x  {\rm d}v
-\int\partial^{\alpha-\gamma}a_{t}\partial^{\gamma} u\cdot\partial^{\alpha}u_{t}\, {\rm d}x\Big).\nonumber
\end{align}
For the terms on the right hand side of the above equality, we have
\begin{align}
&\frac{1}{2}\int(1+\rho)u\cdot\langle v, |\{\mathbf{I}-\mathbf{P}\}\partial^{\alpha}f_{t}|^{2}\rangle   \, {\rm d}x
\leq C(1+\|\nabla \rho\|_{H^{1}})\|\nabla u\|_{H^{1}}\|\{\mathbf{I}-\mathbf{P}\}\partial^{\alpha}f_{t}\|^{2}_{\nu},\nonumber\\
&\int (1+\rho)u\cdot \langle v\mathbf{P}\partial^{\alpha}f_{t},\{\mathbf{I}-\mathbf{P}\}\partial^{\alpha}f_{t}\rangle   \, {\rm d}x
\leq C\|(1+\rho)u\|_{L^{\infty}}\|\partial^{\alpha}f_{t}\|\|\{\mathbf{I}-\mathbf{P}\}\partial^{\alpha}f_{t}\|\nonumber\\
&\qquad\qquad\qquad\qquad\qquad\leq \eta \|\{\mathbf{I}-\mathbf{P}\}\partial^{\alpha}f_{t}\|^{2}
+C_{\eta}(1+\|\rho\|^{2}_{H^{2}})\|\nabla u\|^{2}_{H^{1}}\|\partial^{\alpha}f_{t}\|^{2},\nonumber\\
&\int u\partial^{\alpha}a_{t}\cdot\partial^{\alpha}(b_{t}-u_{t})\, {\rm d}x \leq C\|\nabla u\|_{H^{1}}\|\partial^{\alpha}a_{t}\|\|\partial^{\alpha}(u_{t}-b_{t})\|\nonumber\\
&\qquad\qquad\qquad\qquad\qquad\leq \eta\|\partial^{\alpha}(b_{t}-u_{t})\|^{2}+C_{\eta}\|\nabla u\|^{2}_{H^{1}}\|f_{t}\|^{2}_{L^{2}_{v}(H^{3}_{x})},\nonumber\\
&\int\rho \partial^{\alpha}a_{t}u\cdot \partial^{\alpha}b_{t}\, {\rm d}x \leq C\|\rho u\|_{L^{\infty}}\|\partial^{\alpha}a_{t}\|\|\partial^{\alpha}b_{t}\|
\leq C(\|\nabla \rho\|^{2}_{H^{1}}+\|\nabla u\|^{2}_{H^{1}})\|f_{t}\|^{2}_{L^{2}_{v}(H^{3}_{x})},\nonumber\\
&\int\partial^{\gamma} u\partial^{\alpha-\gamma}a_{t}\cdot\partial^{\alpha}u_{t}\, {\rm d}x \leq C \|\nabla u\|_{H^{2}}\|\nabla a_{t}\|_{H^{1}}\|u_{t}\|_{H^{3}}\nonumber\\
&\qquad\qquad\qquad\qquad\qquad\leq\epsilon \|\nabla a_{t}\|^{2}_{H^{1}}+C_{\epsilon}\|\nabla u\|^{2}_{H^{2}}\|u_{t}\|^{2}_{H^{3}},\nonumber\\
&\iint\partial^{\gamma}\big((1+\rho)u\big)\cdot \frac{v}{2}\partial^{\alpha-\gamma}f_{t}\partial^{\alpha}f_{t}\, {\rm d}x  {\rm d}v\nonumber\\
&=\int\partial^{\gamma}\big((1+\rho)u\big)\cdot \frac{v}{2}\Big(\partial^{\alpha-\gamma}\{\mathbf{I}-\mathbf{P}\}f_{t}
+\partial^{\alpha-\gamma}\mathbf{P}f_{t}\Big)\partial^{\alpha}f_{t}\, {\rm d}x  {\rm d}v\nonumber\\
&\leq C\big(1+\|\rho\|_{H^{3}}\big)\|\nabla u\|_{H^{2}}\Big\{\sum_{|\alpha|\leq 2}\|\partial^{\alpha}\{\mathbf{I}-\mathbf{P}\}f_{t}\|_{\nu}
+\|\nabla(a_{t},b_{t})\|_{H^{1}}\Big\}\|\partial^{\alpha}f_{t}\|\nonumber\\
&\leq \eta\sum_{|\alpha|\leq 2}\|\partial^{\alpha}\{\mathbf{I}-\mathbf{P}\}f_{t}\|_{\nu}+C_{\eta}(1+\|\rho\|^{2}_{H^{3}})
\|\nabla u\|^{2}_{H^{2}}\|f_{t}\|^{2}_{L^{2}_{v}(H^{3}_{x})}\nonumber\\
&\quad+\epsilon\|\nabla (a_{t},b_{t})\|^{2}_{H^{1}}+C_{\epsilon}(1+\|\rho\|^{2}_{H^{3}})
\|\nabla u\|^{2}_{H^{2}}\|f_{t}\|^{2}_{L^{2}_{v}(H^{3}_{x})},\nonumber
\end{align}
where $\epsilon,\eta$ are sufficiently small constant.
 Thus the third and forth terms have the following bound:
\begin{align}
&\iint\partial^{\alpha}\big((1+\rho)u\cdot \frac{v}{2}f_{t}\big)\partial^{\alpha}f_{t}\, {\rm d}x  {\rm d}v
-\int\partial^{\alpha}(ua_{t})\cdot\partial^{\alpha}u_{t}\, {\rm d}x \nonumber\\
&\leq C(1+\|\nabla \rho\|_{H^{1}})\|\nabla u\|_{H^{1}}\|\{\mathbf{I}-\mathbf{P}\}\partial^{\alpha}f_{t}\|^{2}_{\nu}
+C\|\nabla (\rho,u)\|^{2}_{H^{1}}\|f_{t}\|^{2}_{L^{2}_{v}(H^{3}_{x})}\nonumber\\
&+\eta\Big\{\sum_{|\alpha|\leq 2}\|\partial^{\alpha}\{\mathbf{I}-\mathbf{P}\}f_{t}\|_{\nu}+\|\partial^{\alpha}(u_{t}-b_{t})\|^{2}\Big\}
+C_{\eta}(1+\|\rho\|^{2}_{H^{3}})
\|\nabla u\|^{2}_{H^{2}}\|f_{t}\|^{2}_{L^{2}_{v}(H^{3}_{x})}\nonumber\\
&+\epsilon\|\nabla (a_{t},b_{t})\|^{2}_{H^{1}}+C_{\epsilon}(1+\|\rho\|^{2}_{H^{3}})
\|\nabla u\|^{2}_{H^{2}}\|f_{t}\|^{2}_{L^{2}_{v}(H^{3}_{x})}+C_{\epsilon}\|\nabla u\|^{2}_{H^{2}}\|u_{t}\|^{2}_{H^{3}}.\label{v3.18}
\end{align}
Similarly, we have
\begin{align}
&\!\!\!\!\!\!\!\!\!\!\!\!\!\!\!\!\iint[\partial^{\alpha},(1+\rho)u\cdot \nabla_{v}]f_{t}\partial^{\alpha}f_{t}\, {\rm d}x  {\rm d}v\nonumber\\
\leq &\eta\sum_{|\alpha|\leq 2}\|\partial^{\alpha}\{\mathbf{I}-\mathbf{P}\}f_{t}\|_{\nu}+C_{\eta}(1+\|\rho\|^{2}_{H^{3}})
\|\nabla u\|^{2}_{H^{2}}\|f_{t}\|^{2}_{L^{2}_{v}(H^{3}_{x})}\nonumber\\
&+\epsilon\|\nabla (a_{t},b_{t})\|^{2}_{H^{1}}+C_{\epsilon}(1+\|\rho\|^{2}_{H^{3}})
\|\nabla u\|^{2}_{H^{2}}\|f_{t}\|^{2}_{L^{2}_{v}(H^{3}_{x})}.\label{v3.19}
\end{align}

Next, we deal with $\iint\partial^{\alpha}(\rho_{t}\mathcal{L}f )\partial^{\alpha}f_{t}\, {\rm d}x  {\rm d}v$. We have
\begin{align}
\iint\partial^{\alpha}&(\rho_{t}\mathcal{L}f )\partial^{\alpha}_{x}f_{t}\, {\rm d}x  {\rm d}v\nonumber\\
&=-\iint\partial^{\alpha}\Big(\rho_{t}\big(\nabla_{v}+\frac{v}{2}\big)f\Big)\cdot\partial^{\alpha}b_{t}\sqrt{M}\, {\rm d}x  {\rm d}v\nonumber\\
&\quad-\iint\partial^{\alpha}\Big(\rho_{t}\big(\nabla_{v}+\frac{v}{2}\big)f\Big)\cdot\Big(\nabla_{v}+\frac{v}{2}\Big)\partial^{\alpha}_{x}\{\mathbf{I}-\mathbf{P}\}f_{t}\, {\rm d}x  {\rm d}v
\nonumber\\
&\leq C\|\nabla \rho_{t}\|_{H^{2}}\Big\|\Big(\nabla_{v}+\frac{v}{2}\Big)\nabla_{x}f\Big\|_{L^{2}_{v}(H^{2}_{x})}
\Big(\|\partial^{\alpha}\{\mathbf{I}-\mathbf{P}\}f_{t}\|_{\nu}+\|\partial^{\alpha}b_{t}\|\Big)\nonumber\\
&\leq \eta\|\partial^{\alpha}\{\mathbf{I}-\mathbf{P}\}f_{t}\|^{2}_{\nu}+
C_{\eta}\Big\{\sum_{|\alpha|\leq 3}\|\partial^{\alpha}\{\mathbf{I}-\mathbf{P}\}f\|^{2}_{\nu}+\|\nabla (a,b)\|^{2}_{H^{2}}\Big\}\|\rho_{t}\|^{2}_{H^{3}}\nonumber\\
&\quad+\epsilon\|\nabla b_{t}\|^{2}_{H^{2}}+C_{\epsilon}\Big\{\sum_{|\alpha|\leq 3}\|\partial^{\alpha}\{\mathbf{I}-\mathbf{P}\}f\|^{2}_{\nu}
+\|\nabla (a,b)\|^{2}_{H^{2}}\Big\}\|\rho_{t}\|^{2}_{H^{3}}.\label{v3.20}
\end{align}

Similarly, we have
\begin{align}
-\iint\partial^{\alpha}(\rho \mathcal{L}f _{t})&\partial^{\alpha}_{x}f_{t}\, {\rm d}x  {\rm d}v\nonumber\\
=&\iint \partial^{\alpha}\Big(\rho\big(\nabla_{v}+\frac{v}{2}\big)f_{t}\Big)\cdot\Big(\nabla_{v}+\frac{v}{2}\Big)\partial^{\alpha}_{x}f_{t}\, {\rm d}x  {\rm d}v\nonumber\\
\leq\,& C \|\nabla \rho\|_{H^{^{2}}}\Big\{\sum_{|\alpha|\leq 3}\|\partial^{\alpha}\{\mathbf{I}-\mathbf{P}\}f_{t}\|^{2}_{\nu}+\|\nabla (a_{t},b_{t})\|^{2}_{H^{2}}\Big\}.\label{v3.21}
\end{align}

For the  sake of brevity, we only give the bound of the remaining terms on the right hand side of \eqref{v3.16} as follows:
\begin{align}
&\int\partial^{\alpha}(\rho_{t}u+\rho u_{t})\cdot\partial^{\alpha}b_{t}\, {\rm d}x  \leq \epsilon\|\nabla b_{t}\|^{2}_{H^{2}}
+C_{\epsilon}\|\nabla (\rho,u)\|^{2}_{H^{2}}\|(\rho_{t},u_{t})\|^{2}_{H^{3}},\label{v3.22}\\
&\int\partial^{\alpha} u_{t}\cdot\nabla\frac{p'(1+\rho)}{1+\rho}\partial^{\alpha}\rho_{t}\, {\rm d}x
+\int[\partial^{\alpha},\rho\nabla\cdot]u_{t}\frac{p'(1+\rho)}{(1+\rho)^{2}}\partial^{\alpha}\rho_{t}\, {\rm d}x \nonumber\\
&\qquad\qquad\leq\eta\|\nabla u_{t}\|^{2}_{H^{^{2}}}+  C_{\eta} \|\nabla \rho\|^{2}_{H^{^{2}}}\|\rho_{t}\|^{2}_{H^{^{3}}},\label{v3.23}\\
&\int\frac{p'(1+\rho)}{(1+\rho)^{2}}u\cdot\nabla\partial^{\alpha}\rho_{t}\partial^{\alpha}\rho_{t}\, {\rm d}x \nonumber\\
&\qquad\qquad\leq  C\|\nabla \rho\|_{H^{2}}\|\nabla u\|_{H^{1}}\|\partial^{\alpha}\rho_{t}\|^{2}+C\| u\|_{H^{3}}\| \rho_{t}\|^{2}_{H^{3}},\label{v3.24}\\
&\int[-\partial^{\alpha},u\cdot \nabla]\rho_{t}\frac{p'(1+\rho)}{(1+\rho)^{2}}\partial^{\alpha}\rho_{t}\, {\rm d}x
-\int\partial^{\alpha}\big(\rho_{t}\ \dv u+u_{t}\cdot \nabla \rho\big)\frac{p'(1+\rho)}{(1+\rho)^{2}}\partial^{\alpha}\rho_{t}\, {\rm d}x \nonumber\\
&\qquad\qquad\leq \eta\|\nabla u_{t}\|^{2}_{H^{2}}+C_{\eta}\|\nabla\rho\|^{2}_{H^{3}}\|\nabla \rho_{t}\|^{2}_{H^{2}}+C\|u\|_{H^{4}}\|\nabla \rho_{t}\|^{2}_{H^{2}},\label{v3.25}\\
&\int u\cdot \nabla \partial^{\alpha}u_{t}\cdot \partial^{\alpha}u_{t}\, {\rm d}x +\int\nabla\frac{1}{1+\rho}\cdot\nabla\partial^{\alpha}u_{t}
\cdot\partial^{\alpha}u_{t}\, {\rm d}x \nonumber\\
&\qquad\qquad\quad+\sum_{1\leq\gamma \leq\alpha}C_{\alpha,\gamma}\int\partial^{\gamma}\Big(\frac{1}{1+\rho}\Big)
\partial^{\alpha-\gamma}\Delta u_{t}\partial^{\alpha}u_{t}\, {\rm d}x \nonumber\\
&\qquad\qquad\leq \eta\|\nabla \partial^{\alpha}u_{t}\|^{2} +C_{\eta} \big(\|\nabla(\rho,u)\|^{2}_{H^{2}}+\|\nabla\rho\|^{3}_{H^{2}}\big)\|u_{t}\|^{2}_{H^{3}},\label{v3.26}\\
&\int-\partial^{\alpha}(u_{t}\cdot \nabla u)\partial^{\alpha}\, {\rm d}x +\int[-\partial^{\alpha},u\cdot\nabla]u_{t}\partial^{\alpha}u_{t}\, {\rm d}x\nonumber\\
&\qquad\qquad\quad +\int\Big[-\partial^{\alpha},\frac{p'(1+\rho)}{1+\rho}\nabla\Big]\rho_{t}\partial^{\alpha}u_{t}\, {\rm d}x \nonumber\\
&\qquad\qquad\leq\eta\|\nabla u_{t}\|^{2}_{H^{2}}+C_{\eta}\big(\|(\nabla\rho,\nabla u)\|^{2}_{H^{3}}+\|\nabla \rho\|^{3}_{H^{2}}\big)\|(\rho_{t},u_{t})\|^{2}_{H^{3}},\label{v3.27}\\
&\int\partial_{t}\Big(\frac{p'(1+\rho)}{(1+\rho)^{2}}\Big)|\partial^{\alpha}\rho_{t}|^{2}\, {\rm d}x
\leq C(1+\|\rho\|_{H^{3}})\|u\|_{H^{3}}\|\nabla \rho_{t}\|^{2}_{H^{2}},\label{v3.28}\\
&\int\partial^{\alpha}\Big(\frac{1}{(1+\rho)^{2}}\rho_{t}\Delta u\Big)\cdot\partial^{\alpha}u_{t}\, {\rm d}x \nonumber\\
&\qquad\qquad\leq C(1+\|\rho\|^{4}_{H^{3}})\|\nabla \rho\|^{2}_{H^{2}}\|\rho_{t}\|^{2}_{H^{3}}+C\|\nabla u\|^{2}_{H^{4}}\|u_{t}\|^{2}_{H^{3}},\label{v3.29}\\
&\int\partial^{\alpha}\big((1+\rho)^{\gamma-3}\rho_{t}\nabla \rho\big)\cdot\partial^{\alpha}u_{t}\, {\rm d}x
\leq C(1+\|\rho\|^{2}_{H^{3}})\|\nabla \rho\|^{2}_{H^{3}}\|(\rho_{t},u_{t})\|^{2}_{H^{3}},\label{v3.30}\\
&\int\partial^{\alpha}(au_{t})\cdot\partial^{\alpha}u_{t}\, {\rm d}x \leq \eta\|\nabla u_{t}\|^{2}_{H^{2}}+C_{\eta}\|\nabla a\|^{2}_{H^{2}}\|u_{t}\|^{2}_{H^{3}}.\label{v3.31}
\end{align}
Finally, plugging the above estimates \eqref{v3.17}-\eqref{v3.31} into \eqref{v3.16}, using\eqref{v2.1}, and choosing $\eta$ sufficiently small,
we obtain \eqref{v3.15}.
\end{proof}

\begin{Remark}\label{vr3.1}
Combining Proposition\,\ref{vl3.2} and Proposition\,\ref{vl3.3},
for the solution  $(f,\rho,u)$   obtained in Theorem \ref{vt1.1},   we have
\begin{align}
&\frac{1}{2}\frac{\rm d}{{\rm d}t}\bigg\{\sum_{|\alpha|\leq 3}\|\partial^{\alpha}f_{t}\|^{2}+\sum_{|\alpha|\leq 3}\|\partial^{\alpha}u_{t}\|^{2}
+\|\rho_{t}\|^{2}+\sum_{1\leq |\alpha|\leq 3}\Big\|\frac{\sqrt{p'(1+\rho)}}{1+\rho}\partial^{\alpha}\rho_{t}\Big\|^{2}\bigg\}\nonumber\\
&+\lambda\sum_{|\alpha|\leq3}\big\{\|\partial^{\alpha}\{\mathbf{I}-\mathbf{P}\}f_{t}\|^{2}_{\nu}
+\|\nabla\partial^{\alpha}u_{t}\|^{2}+\|\partial^{\alpha}(u_{t}-b_{t})\|^{2}\big\}\nonumber\\
\leq\, &\epsilon \big\{\|\nabla a_{t}\|^{2}_{H^{1}}+\|\nabla b_{t}\|^{2}_{H^{2}}+\|\nabla \rho_{t}\|^{2}\big\}
+C_{\epsilon}\big(1+\|\rho\|^{^{2}}_{H^{3}}\big)\|\nabla u\|^{2}_{H^{2}}\|f_{t}\|^{2}_{L^{2}_{v}(H^{3}_{x})}\nonumber\\
&+C_{\epsilon}\bigg\{\sum_{|\alpha|\leq 3}\|\partial^{\alpha}\{\mathbf{I}-\mathbf{P}\}f\|^{2}_{\nu}
+\|\nabla_{x}(a,b,\rho,u)\|^{2}_{H^{2}}+\|u-b\|^{2}\bigg\}\|(\rho_{t},u_{t})\|^{2}_{H^{3}}\nonumber\\
&+C\|\rho\|_{H^{3}}\|\nabla (a_{t},b_{t})\|^{2}_{H^{2}}+C\big(1+\|\rho\|_{H^{3}}\big)\|u\|_{H^{4}}\|\nabla \rho_{t}\|^{2}_{H^{2}}\nonumber\\
&+C\big((1+\|\rho\|_{H^{2}})\|u\|_{H^{^{2}}}+\|\rho\|_{H^{3}}\big)\sum_{1\leq|\alpha|\leq3}\|\partial^{\alpha}\{\mathbf{I}-\mathbf{P}\}f_{t}\|^{2}_{\nu}\nonumber\\
&+C(1+\|\rho\|^{4}_{H^{4}}+\|u\|^{2}_{H^{2}})
\bigg\{\sum_{|\alpha|\leq 3}\|\partial^{\alpha}\{\mathbf{I}-\mathbf{P}\}f\|^{2}_{\nu}
+\|\nabla (a,b)\|^{2}_{H^{2}}\nonumber\\
&+\|\nabla \rho\|^{2}_{H^{3}}+\|\nabla u\|^{2}_{H^{4}}\bigg\}
\big(\|f_{t}\|^{2}_{L^{2}_{v}(H^{3}_{x})}+\|\rho_{t},u_{t}\|^{2}_{H^{3}}\big).\label{v3.32}
\end{align}
\end{Remark}
Next, we need to estimate $\|\nabla (a_{t},b_{t})\|^{2}_{H^{2}}$.
 From \eqref{v2.13}-\eqref{v2.15}, we deduce that $(a_{t},b_{t})$ satisfy the following system
\begin{align}
&\partial_{t}a_{t}+\dv b_{t}=0,\label{v3.33}\\
&\partial_{t}b_{t,i}+\partial_{i}a_{t}+\sum_{j}\partial_{j}\Gamma_{i,j}(\{\mathbf{I}-\mathbf{P}\}f_{t})\nonumber\\
&\qquad=-(1+\rho) b_{t,i}-\rho_{t}b_{i}+\rho_{t}u_{i}(1+a)+(1+\rho)u_{t,i}(1+a)+(1+\rho)u_{i}a_{t},\label{v3.34}\\
&\partial_{i}b_{t,j}+\partial_{j}b_{t,i}-(1+\rho)(u_{t,i}b_{j}+u_{t,j}b_{i}+u_{i}b_{t,j}+u_{j}b_{t,i})-\rho_{t}(u_{i}b_{j}+u_{j}b_{i})\nonumber\\
&\qquad=-\partial_{t}\Gamma(\{\mathbf{I}-\mathbf{P}\}f_{t})+\Gamma_{i,j}(l_{t}+r_{t}+s_{t}),\label{v3.35}
\end{align}
where
\begin{align}
&l_{t}:=-v\cdot \nabla_{x}\{\mathbf{I}-\mathbf{P}\}f_{t}+L\{\mathbf{I}-\mathbf{P}\}f_{t},\nonumber\\
&r_{t}:=\partial_{t}\Big(-u\cdot\nabla_{v}\{\mathbf{I}-\mathbf{P}\}f+\frac{1}{2}u\cdot v\{\mathbf{I}-\mathbf{P}\}f\Big),\nonumber\\
&s_{t}:=\partial_{t}\Big(\rho M^{-\frac{1}{2}}\nabla_{v}\cdot\Big(\frac{v}{2}\sqrt{M}\{\mathbf{I}-\mathbf{P}\}f
+\sqrt{M}\{\mathbf{I}-\mathbf{P}\}f-u\sqrt{M}\{\mathbf{I}-\mathbf{P}\}f\Big)\Big).\nonumber
\end{align}
Define the following functional
\begin{align}
\bar{\mathcal{E}}_{0}(f_{t}(t)):=\,&\sum_{|\alpha|\leq 2}\sum_{i,j}\int_{\mathbb{R}^{3}}\partial^{\alpha}_{x}
(\partial_{x_{j}}b_{t,i}+\partial_{_{x_{i}}}b_{t,j})\partial^{\alpha}_{x}\Gamma_{i,j}(\{\mathbf{I}-\mathbf{P}\}f_{t})\, {\rm d}x \nonumber\\
&-\sum_{|\alpha|\leq 2}\int_{\mathbb{R}^{3}}\partial^{\alpha}_{x}a_{t}\partial^{\alpha}_{x}\nabla_{x}\cdot b_{t} \, {\rm d}x.\label{v3.36}
\end{align}
Similarly to the proofs of Proposition \ref{vl2.4} and Proposition \ref{vl2.5}, we have the following two results:
\begin{Proposition}\label{vl3.4}
Assume that $(f,\rho,u)$ is the solution obtained in Theorem \ref{vt1.1}. Then, we have
\begin{align}
&\frac{\rm d}{{\rm d}t}\bar{\mathcal{E}}_{0}(f_{t}(t))+\frac{3}{2}\|\nabla b_{t}\|^{2}_{H^{2}}+\frac{1}{2}\|\nabla\cdot b_{t}\|^{2}_{H^{2}}
+\frac{1}{4}\|\nabla a_{t}\|^{2}_{H^{2}}\nonumber\\
\leq\,& C\big(\|u_{t}-b_{t}\|^{2}_{H^{2}}+\|\{\mathbf{I}-\mathbf{P}\}f_{t}\|^{2}_{L^{2}_{x}(H^{3}_{x})}\big)
+C\big(1+\|\rho\|^{2}_{H^{3}}+\|u\|^{2}_{H^{3}}\big)\nonumber\\
&\times\big(\|\nabla(a,b,u)\|^{2}_{H^{2}}
+\|\{\mathbf{I}-\mathbf{P}\}f\|^{2}_{L^{2}_{x}(H^{3}_{x})}+\|u-b\|^{2}_{H^{2}}\big)
\big(\|f_{t}\|^{2}_{L^{2}_{x}(H^{3}_{x})}+\|(\rho_{t},u_{t})\|^{2}_{H^{3}}\big)\nonumber\\
&+C\big(\|(\rho,u)\|^{2}_{H^{3}}+\|\rho\|^{2}_{H^{3}}\|u\|^{2}_{H^{3}}\big)
\big(\|u_{t}-b_{t}\|^{2}_{H^{2}}+\|\{\mathbf{I}-\mathbf{P}\}f_{t}\|^{2}_{L^{2}_{x}(H^{3}_{x})}\big).\label{v3.37}
\end{align}
\end{Proposition}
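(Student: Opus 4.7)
The plan is to adapt the argument used for Propositions 2.4 and 2.5, which established the analogous dissipation estimate for $\mathcal{E}_0$ using the $(a,b)$-system (2.13)--(2.15), to the time-differentiated system \eqref{v3.33}--\eqref{v3.35} for $(a_t, b_t)$ and the corresponding functional $\bar{\mathcal{E}}_0(f_t(t))$. The overall idea is that \eqref{v3.33}--\eqref{v3.35} has the same quasi-hyperbolic structure as \eqref{v2.13}--\eqref{v2.15}, so differentiating $\bar{\mathcal{E}}_0$ in time and substituting generates the dissipation $\|\nabla a_t\|^2_{H^2} + \|\nabla b_t\|^2_{H^2}$, while all the extra forcing terms on the right of \eqref{v3.34}--\eqref{v3.35}, which involve products of the perturbation variables with their time derivatives, are treated as small nonlinear errors.

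Concretely, first I would differentiate the second piece $-\sum_{|\alpha|\leq 2}\int \partial^\alpha a_t\,\partial^\alpha \nabla\!\cdot\! b_t\,\mathrm dx$ in $t$. Using \eqref{v3.33} to replace $\partial_t a_t = -\nabla\!\cdot\! b_t$ and substituting \eqref{v3.34} to replace $\partial_t b_{t,i}$, integrating by parts in $x$ produces the two leading contributions $\|\nabla\!\cdot\! b_t\|^2_{H^2}$ and $\|\nabla a_t\|^2_{H^2}$ with the correct signs, up to commutators and the forcing from the right side of \eqref{v3.34}. Next, for the first piece $\sum_{|\alpha|\leq 2}\int \partial^\alpha(\partial_{x_j}b_{t,i}+\partial_{x_i}b_{t,j})\partial^\alpha \Gamma_{i,j}(\{\mathbf I - \mathbf P\}f_t)\,\mathrm dx$, I would transfer one $x$-derivative by integration by parts and substitute \eqref{v3.35}. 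This generates the symmetric-gradient dissipation $\|\partial_{x_j}b_{t,i}+\partial_{x_i}b_{t,j}\|^2_{H^2}$, which by the Korn-type identity on $\mathbb R^3$ controls $\|\nabla b_t\|^2_{H^2}$ with the stated coefficient $\tfrac32$.

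For the error terms I would bound each nonlinear product on the right side of \eqref{v3.34}--\eqref{v3.35}, namely $\rho_t b$, $\rho_t u(1+a)$, $(1+\rho)u_t(1+a)$, $(1+\rho)u_i a_t$, $\rho_t(u_i b_j + u_j b_i)$ and $(1+\rho)(u_{t,i}b_j + u_i b_{t,j})$, by Cauchy--Schwarz against $\nabla a_t$ or $\nabla b_t$, absorbing small multiples of the dissipation into the left-hand side. The product structure is handled by $H^2(\mathbb R^3)\hookrightarrow L^\infty$ together with the assumed smallness of $(\rho,u,a,b)$ in $H^4$; the key reorganization is to rewrite $-(1+\rho)b_t = -(1+\rho)(b_t - u_t) - (1+\rho)u_t$ so that the term involving $u_t - b_t$ is matched against the dissipation $\|u_t - b_t\|^2_{H^2}$ supplied by Proposition \ref{vl3.3}, while the remaining $u_t$ enters the last factor on the right of \eqref{v3.37}. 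The source terms $\Gamma(l_t+r_t+s_t)$ are expanded using the definitions of $l_t$, $r_t$, $s_t$; the contribution from $l_t$ yields factors of $\{\mathbf I-\mathbf P\}f_t$, while the nonlinear pieces $r_t$ and $s_t$ produce products of $(u,\rho)$ with $\partial_t\{\mathbf I-\mathbf P\}f$ or $\{\mathbf I-\mathbf P\}f_t$, both of which match the structure of the right-hand side of \eqref{v3.37}.

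The main obstacle is the appearance of $\partial_t\Gamma(\{\mathbf I - \mathbf P\}f_t)$ on the right of \eqref{v3.35}: naive use of this term would require an estimate on $\partial_t\{\mathbf I-\mathbf P\}f_t$, which is not available. The fix is to pair it against the symmetric gradient of $b_t$ in the first piece of $\bar{\mathcal E}_0$ and integrate by parts in $t$, so that the time derivative falls onto $\partial^\alpha(\partial_{x_j}b_{t,i}+\partial_{x_i}b_{t,j})$, producing a term that, together with the derivative of the second piece, is exactly $\frac{\mathrm d}{\mathrm dt}\bar{\mathcal E}_0(f_t(t))$. The remaining commutators and boundary-type pieces are absorbed into the nonlinear error. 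Collecting everything and choosing the small constants in the Cauchy--Schwarz absorptions sufficiently small yields \eqref{v3.37}.
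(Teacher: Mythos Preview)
Your proposal is correct and follows essentially the same route as the paper, which simply states that Proposition~\ref{vl3.4} is proved ``similarly to the proofs of Proposition~\ref{vl2.4} and Proposition~\ref{vl2.5}'': adapt the computation for $\mathcal{E}_0$ based on the $(a,b)$-system \eqref{v2.13}--\eqref{v2.15} to the time-differentiated system \eqref{v3.33}--\eqref{v3.35} for $(a_t,b_t)$, handling the dangerous $\partial_t\Gamma_{i,j}(\{\mathbf I-\mathbf P\}f_t)$ by the same temporal integration by parts used in \eqref{v2.18}. Your identification of the key obstacle and its resolution, and your reorganization $-(1+\rho)b_t = -(1+\rho)(b_t-u_t) - (1+\rho)u_t$ to match the $\|u_t-b_t\|_{H^2}^2$ dissipation, are exactly the ingredients the paper's argument uses; the only cosmetic difference is that the paper (in Proposition~\ref{vl2.4}) starts from the squared norms $\sum_{i,j}\|\partial^\alpha(\partial_i b_j+\partial_j b_i)\|^2$ and $\|\partial^\alpha\nabla a\|^2$ and derives the time derivative of the functional, whereas you phrase it as differentiating $\bar{\mathcal E}_0$ first and then substituting, which is an equivalent rearrangement of the same identity.
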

\begin{Proposition}\label{vl3.5}
Assume that $(f,\rho,u)$ is the solution obtained in Theorem \ref{vt1.1}. Then, we have
\begin{align}
&\frac{\rm d}{{\rm d}t}\int\partial^{\alpha}u_{t}\partial^{\alpha}\nabla \rho_{t}\, {\rm d}x  +\frac{1}{2}p'(1)\|\nabla \partial^{\alpha}\rho_{t}\|^{2}\nonumber\\
\leq\,& C\big(\|\nabla u_{t}\|^{2}_{H^{3}}+\|u_{t}-b_{t}\|^{2}_{H^{2}}\big)+C\big(1+\|\rho\|^{4}_{H^{3}}\big)\nonumber\\
&\big(\|\nabla(a,\rho)\|^{2}_{H^{2}}+\|\nabla u\|^{2}_{H^{3}}\big)\big(\|f_{t}\|^{2}_{L^{2}_{x}(H^{3}_{x})}+\|(\rho_{t},u_{t})\|^{2}_{H^{3}}\big)\nonumber\\
&+C \|(\rho,u)\|_{H^{3}}\|\nabla(\rho_{t},u_{t})\|^{2}_{H^{2}}.\label{v3.38}
\end{align}
\end{Proposition}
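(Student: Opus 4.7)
The plan is to mirror the template of Proposition \ref{vl2.5} (the analogous estimate for $(u,\rho)$ rather than $(u_t,\rho_t)$), now applied to the time-differentiated system \eqref{v3.2}--\eqref{v3.3}. Applying $\partial^\alpha$ to both equations and differentiating the cross integral in time via the product rule, I would write
\[
\frac{\rm d}{{\rm d}t}\int \partial^\alpha u_t\cdot \partial^\alpha \nabla \rho_t \, {\rm d}x = \int \partial^\alpha \partial_t u_t \cdot \partial^\alpha \nabla \rho_t \, {\rm d}x + \int \partial^\alpha u_t \cdot \partial^\alpha \nabla \partial_t \rho_t \, {\rm d}x,
\]
and then substitute $\partial_t u_t$ from \eqref{v3.3} into the first term. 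The pressure contribution $-\tfrac{p'(1+\rho)}{1+\rho}\nabla \rho_t$ paired against $\partial^\alpha\nabla\rho_t$ supplies the coercive term $-p'(1)\|\partial^\alpha \nabla \rho_t\|^2$ after splitting $\tfrac{p'(1+\rho)}{1+\rho}=p'(1)+\bigl(\tfrac{p'(1+\rho)}{1+\rho}-p'(1)\bigr)$, with the second piece producing a remainder of the form $C\|\rho\|_{H^3}\|\partial^\alpha\nabla\rho_t\|^2$ that will be absorbed.

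For the second integral, I would integrate by parts in $x$ to move the gradient onto $u_t$, obtaining $-\int \partial^\alpha \dv u_t \, \partial^\alpha \partial_t \rho_t \, {\rm d}x$, and then replace $\partial_t\rho_t$ using \eqref{v3.2}. The principal piece $-(1+\rho)\dv u_t$ contributes a term of size $C\|\nabla u_t\|^2_{H^3}$ after commutator estimates; similarly, the dissipative $\tfrac{1}{1+\rho}\Delta u_t$ term from \eqref{v3.3}, when paired against $\partial^\alpha\nabla\rho_t$ and integrated by parts, yields another $C\|\nabla u_t\|^2_{H^3}$ contribution via Cauchy--Schwarz. These together account for the $\|\nabla u_t\|^2_{H^3}$ piece on the right-hand side of \eqref{v3.38}, while the residual coupling with $b_t$ in \eqref{v3.3} enters as $\|u_t-b_t\|^2_{H^2}$.

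The remaining nonlinear terms—transport pieces $u_t\cdot \nabla u$, $u\cdot \nabla u_t$, $u\cdot \nabla \rho_t$, $u_t\cdot \nabla \rho$, $\rho_t\dv u$, the lower-order products $\rho_t \Delta u$, $a_t u$, $u_t(1+a)$, $\gamma(\gamma-2)(1+\rho)^{\gamma-3}\rho_t\nabla\rho$—will be bounded by Moser-type calculus inequalities combined with $H^2\hookrightarrow L^\infty$ and the smallness of $\|\rho\|_{H^3}, \|u\|_{H^3}$, yielding precisely the factor $(1+\|\rho\|^4_{H^3})(\|\nabla(a,\rho)\|^2_{H^2}+\|\nabla u\|^2_{H^3})(\|f_t\|^2_{L^2_v(H^3_x)}+\|(\rho_t,u_t)\|^2_{H^3})$, together with a lower-order remainder absorbed into $C\|(\rho,u)\|_{H^3}\|\nabla(\rho_t,u_t)\|^2_{H^2}$.

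The main obstacle will be handling the top-order variable-coefficient commutators $[\partial^\alpha,\tfrac{p'(1+\rho)}{1+\rho}]\nabla\rho_t$ and $[\partial^\alpha,\tfrac{1}{1+\rho}]\Delta u_t$ without a derivative loss: the commutator inequality $\|[\partial^\alpha,g]h\|\lesssim \|\nabla g\|_{H^{|\alpha|-1}}\|h\|_{L^\infty}+\|\nabla g\|_{L^\infty}\|h\|_{H^{|\alpha|-1}}$ must be applied carefully so that the top derivatives of $\rho_t$ and $u_t$ appear only inside $\|\partial^\alpha \nabla \rho_t\|^2$ and $\|\nabla u_t\|^2_{H^3}$, respectively. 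Once this is in place, the full $p'(1)\|\partial^\alpha\nabla\rho_t\|^2$ splits as $\tfrac{1}{2}p'(1)\|\partial^\alpha\nabla\rho_t\|^2$ on the left and a second $\tfrac{1}{2}p'(1)\|\partial^\alpha\nabla\rho_t\|^2$ absorbing all small variable-coefficient errors, completing \eqref{v3.38}.
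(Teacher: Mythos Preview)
Your proposal is correct and follows essentially the same route as the paper: the paper gives no independent argument for Proposition~\ref{vl3.5} beyond stating that it is proved ``similarly to the proofs of Proposition~\ref{vl2.4} and Proposition~\ref{vl2.5}'', and your plan is precisely to transplant the template of Proposition~\ref{vl2.5} to the time-differentiated system \eqref{v3.2}--\eqref{v3.3}. The only cosmetic difference is that the paper (in the proof of Proposition~\ref{vl2.5}) starts from the identity $p'(1)\|\nabla\partial^{\alpha}\rho\|^{2}=-\int\nabla\partial^{\alpha}\rho\,\partial^{\alpha}\partial_{t}u\,{\rm d}x+\cdots$ and then unfolds the time derivative, whereas you begin by differentiating the cross integral and substitute both equations; the two computations are algebraically identical.
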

Now, we define a temporal energy functional $\bar{\mathcal{E}}_{1}(t)$ and the corresponding dissipation rate $\bar{\mathcal{D}}_{1}(t)$ by
\begin{align}
\bar{\mathcal{E}}_{1}(t)& :=\|f_{t}\|^{2} +\|\rho_{t}\|^{2}+\|u_{t}\|^{2}+\sum_{1\leq |\alpha|\leq 3}\bigg\{\|\partial^{\alpha}f_{t}\|^{2}
+\Big\|\frac{\sqrt{p'(1+\rho)}}{1+\rho}\partial^{\alpha}\rho_{t}\Big\|^{2}
\nonumber\\
&\quad+\|\partial^{\alpha}u_{t}\|^{2}\bigg\}
+\kappa_{1}\bar{\mathcal{E}}_{0}(t)+\kappa_{2}\sum_{|\alpha|\leq 2}\int_{\mathbb{R}^{3}}\partial^{\alpha}u_{t}\cdot\partial^{\alpha}\nabla \rho_{t} \, {\rm d}x  ,\nonumber\\
\bar{\mathcal{D}}_{1}(t)& :=\|\nabla (a_{t},b_{t},\rho_{t},u_{t})\|^{2}_{H^{2}}+\|b_{t}-u_{t}\|^{2}_{H^{3}}
+\sum_{|\alpha|\leq 3}\Big\{\|\partial^{\alpha}\{\mathbf{I}-\mathbf{P}\}f_{t}\|^{2}_{\nu}+\|\partial^{\alpha}\nabla u_{t}\|^{2}\Big\},\nonumber
\end{align}
 where $\kappa_{1},\kappa_{2}>0$ are  small constants to be chosen later. Obviously, we have
$$\bar{\mathcal{E}}_{1}(t)\sim \|f_{t}(t)\|^{2}_{L^{2}_{v}(H^{3}_{x})}+\|(\rho_{t},u_{t})(t)\|^{2}_{H^{3}}.$$
According to \eqref{v3.32}, \eqref{v3.37}, \eqref{v3.38}, and by choosing $\kappa_{1},\, \kappa_{2}\,\text{and}\, \epsilon$\, sufficiently small,  we finally obtain
that
\begin{align}
&\frac{\rm d}{{\rm d}t}\bar{\mathcal{E}}_{1}(t)+\lambda \sum _{|\alpha|\leq 3}\|\partial^{\alpha}\{\mathbf{I}-\mathbf{P}\}f_{t}\|^{2}_{\nu}\nonumber\\
&+\lambda \Big(\|\nabla (a_{t},b_{t},\rho_{t},u_{t})\|^{2}_{H^{2}}+\|u_{t}-b_{t}\|^{2}_{H^{3}}+\|\nabla u_{t}\|^{2}_{H^{3}}\Big)\nonumber\\
\leq & C \Big\{\|\rho\|_{H^{3}}+\|\rho\|^{2}_{H^{3}}+\|u\|_{H^{4}}+\|u\|^{2}_{H^{3}}+\|\rho\|_{H^{3}}\|u\|_{H^{4}}+\|\rho\|^{2}_{H^{3}}\|u\|^{2}_{H^{3}}\Big\}\nonumber\\
&\times \Big\{\sum _{|\alpha|\leq 3}\|\partial^{\alpha}\{\mathbf{I}-\mathbf{P}\}f_{t}\|^{2}_{\nu}
+\|\nabla (a_{t},b_{t},\rho_{t},u_{t})\|^{2}_{H^{2}}+\|u_{t}-b_{t}\|^{2}_{H^{2}}\Big\}\nonumber\\
&+ C(1+\|\rho\|^{4}_{H^{4}}+\|u\|^{2}_{H^{3}})\Big\{\sum _{|\alpha|\leq 3}\|\partial^{\alpha}\{\mathbf{I}-\mathbf{P}\}f\|^{2}_{\nu}+\|\nabla (a,b)\|^{2}_{H^{2}}\nonumber\\
&+\|\nabla \rho\|^{2}_{H^{3}}+\|\nabla u\|^{2}_{H^{4}}+\|u-b\|^{2}_{H^{2}}\Big\}
\big(\|f_{t}\|^{2}_{L^{2}_{v}(H^{3}_{x})}+\|(\rho_{t},u_{t})\|^{2}_{H^{3}}\big).\nonumber
\end{align}
For simplicity, by the definition of  $\mathcal{E}_{1}(t),\mathcal{D}_{1}(t)$ in \eqref{vz1},\eqref{vz2}, we can rewrite the above inequality as
\begin{align}
\frac{\rm d}{{\rm d}t}\bar{\mathcal{E}}_{1}(t)+\lambda \bar{\mathcal{D}}_{1}(t)\leq C \big(\mathcal{E}^{\frac{1}{2}}_{1}(t)+\mathcal{E}^{2}_{1}(t)\big)\bar{\mathcal{D}}_{1}(t)
+C(1+\mathcal{E}^{2}_{1}(t))\mathcal{D}_{1}(t)\bar{\mathcal{E}}_{1}(t),\label{v3.39}
\end{align}

Next, we need to estimate  the mixed space-velocity derivatives of $f_{t}$.
Since $\|\partial^{\alpha}_{\beta}\mathbf{P}f_{t}\|\leq C \|\partial^{\alpha}f_{t}\|$ for any $\alpha,\beta$,
 we only estimate $\|\partial^{\alpha}_{\beta}\{\mathbf{I}-\mathbf{P}\}f_{t}\|$.
From \eqref{v2.25}, we easily deduce that
\begin{align}
&\partial_{t}\{\mathbf{I}-\mathbf{P}\}f_{t}+v\cdot \nabla_{x}\{\mathbf{I}-\mathbf{P}\}f_{t}+u\cdot \nabla_{v}\{\mathbf{I}-\mathbf{P}\}f_{t}
-\frac{1}{2}u\cdot v\{\mathbf{I}-\mathbf{P}\}f_{t}\nonumber\\
=\,&\mathcal{L}\{\mathbf{I}-\mathbf{P}\}f_{t}
+\mathbf{P}\big(v\cdot \nabla_{x}\{\mathbf{I}-\mathbf{P}\}f_{t}+u\cdot \nabla_{v}\{\mathbf{I}-\mathbf{P}\}f_{t}-\frac{1}{2}u\cdot v
\{\mathbf{I}-\mathbf{P}\}f_{t}\big)\nonumber\\
 &-\{\mathbf{I}-\mathbf{P}\}\Big(v\cdot \nabla_{x}\mathbf{P}f_{t}+u\cdot\nabla_{v}\mathbf{P}f_{t}-\frac{1}{2}u\cdot v\mathbf{P}f_{t}\Big)
+\{\mathbf{I}-\mathbf{P}\}G_{t},\label{v3.40}
\end{align}
where $\{\mathbf{I}-\mathbf{P}\}G_{t}$ is defined by
\begin{align}
& \{\mathbf{I}-\mathbf{P}\}G_{t}\nonumber\\
&=\rho\mathcal{ L}\{\mathbf{I}-\mathbf{P}\}f_{t}+\rho_{t} \mathcal{L}\{\mathbf{I}-\mathbf{P}\}f
-\rho u\nabla _{v}\{\mathbf{I}-\mathbf{P}\}f_{t}+\frac{1}{2}\rho u\cdot v\{\mathbf{I}-\mathbf{P}\}f_{t}\nonumber\\
&\quad-(\rho_{t}u+(1+\rho)u_{t})\cdot \Big(\nabla_{v}-\frac{v}{2}\Big)\{\mathbf{I}-\mathbf{P}\}f\nonumber\\
&\quad+\mathbf{P}\Big\{\rho u\cdot \Big(\nabla_{v}-\frac{v}{2}\Big)
\{\mathbf{I}-\mathbf{P}\}f_{t}
+(\rho_{t}u+(1+\rho)u_{t})\cdot \Big(\nabla_{v}-\frac{v}{2}\Big)\{\mathbf{I}-\mathbf{P}\}f\Big\}\nonumber\\
&\quad-\{\mathbf{I}-\mathbf{P}\}\Big\{\rho u\cdot \Big(\nabla_{v}-\frac{v}{2}\Big) \mathbf{P}f_{t}
+(\rho_{t}u+(1+\rho)u_{t})\cdot \Big(\nabla_{v}-\frac{v}{2}\Big)\mathbf{P}f\Big\}.
\nonumber
\end{align}
Similarly to the proof of Proposition \ref{vl2.6}, we can prove that
\begin{Proposition}\label{vl3.6}
Assume that $(f,\rho,u)$ is the solution obtained in Theorem \ref{vt1.1}. Then we have
\begin{align}
&\frac{\rm d}{{\rm d}t}\sum_{\substack{|\beta|=k \\ |\alpha|+|\beta|\leq 3}}\|\partial^{\alpha}_{\beta}\{\mathbf{I}-\mathbf{P}\}f_{t}\|^{2}
+\lambda \sum_{\substack{|\beta|=k \\ |\alpha|+|\beta|\leq 3}}\|\partial^{\alpha}_{\beta}\{\mathbf{I}-\mathbf{P}\}f_{t}\|^{2}_{\nu}\nonumber\\
\leq& C\Big\{\sum_{|\alpha'|\leq 4-k}\|\partial^{\alpha'}\{\mathbf{I}-\mathbf{P}\}f_{t}\|^{2}_{\nu}+\|\nabla b_{t}\|^{2}_{H^{3-k}} \Big\}
 +C\chi_{2\leq k\leq 4}\sum_{\substack{1\leq |\beta'|\leq k-1\\
|\alpha'|+|\beta'|\leq 3 }}\|\partial^{\alpha'}_{\beta'}\{\mathbf{I}-\mathbf{P}\}f_{t}\|^{2}_{\nu}\nonumber\\
&+ C(1+\|\rho\|^{2}_{H^{3}})\|\nabla u\|^{2}_{H^{2}}
\bigg\{\sum_{|\alpha'|\leq3-k}\|\partial^{\alpha'}\{\mathbf{I}-\mathbf{P}\}f_{t}\|^{2} +
\sum_{\substack{1\leq |\beta'|\leq 3 \\ |\alpha'|+|\beta'|\leq 3}}\|\partial^{\alpha'}_{\beta'}\{\mathbf{I}-\mathbf{P}\}f_{t}\|^{2}\bigg\}\nonumber\\
&+C(1+\|\rho\|^{2}_{H^{3}}+\|u\|^{2}_{H^{3}})
\Big\{\sum_{ |\alpha'|+|\beta'|\leq 3}\|\partial^{\alpha'}_{\beta'}\{\mathbf{I}-\mathbf{P}\}f\|^{2}_{\nu}
+\|\nabla b\|^{2}_{H^{2}}\Big\}\|(\rho_{t},u_{t})\|^{2}_{H^{3}}\nonumber\\
&+C(\|\rho\|^{2}_{H^{3}}+\|u\|^{2}_{H^{3}}+\|\rho\|^{2}_{H^{3}}\|u\|^{2}_{H^{3}})
\Big\{\sum_{|\alpha'|\leq3-k}\|\partial^{\alpha'}\{\mathbf{I}-\mathbf{P}\}f_{t}\|^{2}_{\nu}\nonumber\\
&+\sum_{\substack{1\leq|\beta'|\leq 3 \\ |\alpha'|+|\beta'|\leq 3}}\|\partial^{\alpha'}_{\beta'}\{\mathbf{I}-\mathbf{P}\}f_{t}\|^{2}_{\nu}
+\|\nabla b_{t}\|^{2}_{H^{2}}\Big\}.
 \label{v3.41}
\end{align}
\end{Proposition}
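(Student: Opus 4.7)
The plan is to derive Proposition 3.6 by applying $\partial^{\alpha}_{\beta}$ with $|\beta|=k$ and $|\alpha|+|\beta|\le 3$ to equation \eqref{v3.40}, taking the $L^2_{x,v}$ inner product with $\partial^{\alpha}_{\beta}\{\mathbf{I}-\mathbf{P}\}f_{t}$, and then summing. Because the argument is structurally identical to Proposition \ref{vl2.6} applied to the time-differentiated system, I would follow the same outline, only tracking how the extra factors of $\rho_t, u_t, a_t, b_t$ propagate.

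First I would handle the linear part. The coercivity $\langle -\mathcal{L}g,g\rangle \ge \lambda_0 |\{\mathbf{I}-\mathbf{P}\}g|^2_\nu$ from \eqref{v2.1}, combined with the standard commutator identity $[\partial_\beta,\mathcal{L}]$ producing only $|\beta|-1$ velocity-derivative terms and the gain from $\nabla_v\partial^\alpha_\beta\{\mathbf{I}-\mathbf{P}\}f_t$, yields the leading dissipation $\lambda\sum\|\partial^\alpha_\beta\{\mathbf{I}-\mathbf{P}\}f_t\|^2_\nu$ on the left and, on the right, the unavoidable lower-order pieces $\|\partial^{\alpha'}\{\mathbf{I}-\mathbf{P}\}f_t\|^2_\nu$ with $|\alpha'|\le 4-k$ and (when $k\ge 2$) the characteristic-function term $\chi_{2\le k\le 4}\sum\|\partial^{\alpha'}_{\beta'}\{\mathbf{I}-\mathbf{P}\}f_t\|^2_\nu$ for $|\beta'|\le k-1$. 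The transport terms $v\cdot\nabla_x$ and $u\cdot\nabla_v$ are treated exactly as in Proposition \ref{vl2.6}: the commutator $[\partial_\beta, v\cdot\nabla_x]$ converts one $v$-derivative into an $x$-derivative, generating the lower-order pieces just described. The projected contributions $\mathbf{P}(v\cdot\nabla_x\{\mathbf{I}-\mathbf{P}\}f_t)$ and $\{\mathbf{I}-\mathbf{P}\}(v\cdot\nabla_x\mathbf{P}f_t)$ are controlled by $\|\nabla b_t\|_{H^{3-k}}$ using the definition of $\mathbf{P}$.

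The nonlinear term $\{\mathbf{I}-\mathbf{P}\}G_t$ is the main obstacle, since each of its constituents pairs a factor like $\rho$, $\rho_t$, $u$, or $u_t$ with a $v$-derivative of $\{\mathbf{I}-\mathbf{P}\}f$ or $\{\mathbf{I}-\mathbf{P}\}f_t$, and after taking $\partial^\alpha_\beta$ one must distribute derivatives via Leibniz. I would split into two kinds of terms: those linear in the time-differentiated unknowns $(\rho_t,u_t,f_t)$ and those containing the unperturbed coefficients $(\rho,u,f)$. For the first kind, I would put the highest derivative on $f_t$ and place the coefficient in $L^\infty_x$ via the Sobolev embedding $H^2(\Omega)\hookrightarrow L^\infty(\Omega)$, yielding $(1+\|\rho\|^2_{H^3})\|\nabla u\|^2_{H^2}$ times the inner-$v$-derivative sums of $\{\mathbf{I}-\mathbf{P}\}f_t$ (accounting for the two summation families over $|\alpha'|\le 3-k$ and over $1\le|\beta'|\le 3$). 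For the second kind, the highest derivative lands on $(\rho_t,u_t)$, giving $(1+\|\rho\|^2_{H^3}+\|u\|^2_{H^3})(\sum\|\partial^{\alpha'}_{\beta'}\{\mathbf{I}-\mathbf{P}\}f\|^2_\nu+\|\nabla b\|^2_{H^2})\|(\rho_t,u_t)\|^2_{H^3}$. The remaining cross terms, where a factor like $\rho u$ or $\rho\rho_t$ appears, produce the last mixed line with $(\|\rho\|^2_{H^3}+\|u\|^2_{H^3}+\|\rho\|^2_{H^3}\|u\|^2_{H^3})$.

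After collecting and absorbing the absolute-error terms via Cauchy-Schwarz (using the coercive dissipation on the left to absorb any small fraction of $\|\partial^\alpha_\beta\{\mathbf{I}-\mathbf{P}\}f_t\|^2_\nu$ that appears on the right), the desired inequality \eqref{v3.41} follows. The hard part is purely bookkeeping: carefully matching each Leibniz-expanded term in $\partial^\alpha_\beta\{\mathbf{I}-\mathbf{P}\}G_t$ to one of the five buckets on the right of \eqref{v3.41}, ensuring that (i) no coefficient stronger than $H^3$ (for $\rho,u$) or $H^4$ (for their one-order-higher bounds) is required, and (ii) the lower-order velocity-derivative terms from $[\partial_\beta,v\cdot\nabla_x]$ and from the Fokker–Planck commutator aggregate correctly into the $\chi_{2\le k\le 4}$ term. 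The reduction to Proposition \ref{vl2.6} is then essentially verbatim with $(f,\rho,u)$ replaced by $(f_t,\rho_t,u_t)$ and with an additional family of source terms linear in $(\rho_t,u_t)$ multiplied by $(\rho,u)$-norms of the base solution.
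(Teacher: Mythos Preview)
Your proposal is correct and follows exactly the approach the paper takes: the paper's own proof of Proposition~\ref{vl3.6} consists solely of the sentence ``Similarly to the proof of Proposition~\ref{vl2.6}, we can prove that\ldots'', so your outline of applying $\partial^{\alpha}_{\beta}$ to \eqref{v3.40}, invoking the coercivity \eqref{v2.1}, handling the commutators $[\partial^{\beta}_v,v\cdot\nabla_x]$ and $[\partial^{\beta}_v,\mathcal{L}]=[\partial^{\beta}_v,-|v|^2]$, and bookkeeping the Leibniz expansion of $\{\mathbf{I}-\mathbf{P}\}G_t$ is precisely what is intended. Your identification of which terms populate each of the five ``buckets'' on the right of \eqref{v3.41} matches the structure of the $J_i$ and $Y_i$ estimates in the proof of Proposition~\ref{vl2.6}.
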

Multiplying \eqref{v3.41} by suitable constants $R_{k}$ and taking summation over $k$, we obtain  the following inequality:
\begin{align}
&\frac{\rm d}{{\rm d}t}\sum_{1\leq k\leq 3}R_{k}\sum_{\substack{|\beta|=k \\ |\alpha|+|\beta|\leq 3}}\|\partial^{\alpha}_{\beta}\{\mathbf{I}-\mathbf{P}\}f_{t}\|^{2}
+\lambda \sum_{\substack{1\leq|\beta|\leq 3 \\ |\alpha|+|\beta|\leq 3}}\|\partial^{\alpha}_{\beta}\{\mathbf{I}-\mathbf{P}\}f_{t}\|^{2}_{\nu}\nonumber\\
\leq\,& C\Big\{\sum_{|\alpha|\leq 3}\|\partial^{\alpha}\{\mathbf{I}-\mathbf{P}\}f_{t}\|^{2}_{\nu}+\|\nabla b_{t}\|^{2}_{H^{2}} \Big\}
\nonumber\\
&+ C(1+\|\rho\|^{2}_{H^{3}})\|\nabla u\|^{2}_{H^{2}}
\Big\{\sum_{|\alpha|\leq 2}\|\partial^{\alpha}\{\mathbf{I}-\mathbf{P}\}f_{t}\|^{2}+
\sum_{\substack{1\leq |\beta|\leq 3 \\ |\alpha|+|\beta|\leq 3}}\|\partial^{\alpha}_{\beta}\{\mathbf{I}-\mathbf{P}\}f_{t}\|^{2}\Big\}\nonumber\\
&+C(1+\|\rho\|^{2}_{H^{3}}+\|u\|^{2}_{H^{3}})
\Big\{\sum_{ |\alpha'|+|\beta'|\leq 3}\|\partial^{\alpha'}_{\beta'}\{\mathbf{I}-\mathbf{P}\}f\|^{2}_{\nu}
+\|\nabla b\|^{2}_{H^{2}}\Big\}\|(\rho_{t},u_{t})\|^{2}_{H^{3}}\nonumber\\
&+C(\|\rho\|^{2}_{H^{3}}+\|u\|^{2}_{H^{3}}+\|\rho\|^{2}_{H^{3}}\|u\|^{2}_{H^{3}})
\Big\{\sum_{|\alpha|\leq3}\|\partial^{\alpha}\{\mathbf{I}-\mathbf{P}\}f_{t}\|^{2}_{\nu}\nonumber\\
&+\sum_{\substack{1\leq|\beta|\leq 3 \\ |\alpha|+|\beta|\leq 3}}\|\partial^{\alpha}_{\beta}\{\mathbf{I}-\mathbf{P}\}f_{t}\|^{2}_{\nu}
+\|\nabla b_{t}\|^{2}_{H^{2}}\Big\}.
 \label{v3.42}
\end{align}

Now, we define $\bar{\mathcal{E}}_{2}(t)\,\text{and}\,\bar{\mathcal{D}}_{2}(t)$ as
\begin{align}
\bar{\mathcal{E}}_{2}(t) &:=\sum_{1\leq k\leq 3}R_{k}\sum_{\substack{|\beta|=k \\ |\alpha|+|\beta|\leq 3}}\|\partial^{\alpha}_{\beta}\{\mathbf{I}-\mathbf{P}\}f_{t}\|^{2},\nonumber\\
\bar{\mathcal{D}}_{2}(t) &:=\sum_{\substack{1\leq|\beta|\leq 3 \\ |\alpha|+|\beta|\leq 3}}\|\partial^{\alpha}_{\beta}\{\mathbf{I}-\mathbf{P}\}f_{t}\|^{2}_{\nu}.\nonumber
\end{align}
Therefore, according to the definition of $\mathcal{D}_{2}(t)$ in \eqref{vz4}, \eqref{v3.42} can be rewrited as
 \begin{align}
 \frac{\rm d}{{\rm d}t}\bar{\mathcal{E}}_{2}(t)+\lambda \bar{\mathcal{D}}_{2}(t)&\leq C \bar{\mathcal{D}}_{1}(t)
 +C\big(\mathcal{E}_{1}(t)+\mathcal{E}^{2}_{1}(t)\big)\big(\bar{\mathcal{D}}_{1}(t)+\bar{\mathcal{D}}_{2}(t)\big)\nonumber\\
 &\quad+C\big(1+\mathcal{E}_{1}(t)\big)\big(\mathcal{D}_{1}(t)+\mathcal{D}_{2}(t)\big)\big(\bar{\mathcal{E}}_{1}(t)+\bar{\mathcal{E}}_{2}(t)\big).\label{v3.43}
 \end{align}
Thus, we define a total  energy functional $\bar{\mathcal{E}}(t)$ and the corresponding dissipation rate $\bar{\mathcal{D}}(t)$ by
\begin{align}
\bar{\mathcal{E}}(t) &:=\bar{\mathcal{E}}_{1}(t)+\kappa_{3}\bar{\mathcal{E}}_{2}(t),\nonumber\\
\bar{\mathcal{D}}(t) &:=\bar{\mathcal{D}}_{1}(t)+\kappa_{3}\bar{\mathcal{D}}_{2}(t),\nonumber
\end{align}
where $\kappa_{3}>  0$  is a small constant to be chosen later.\\

With the aid of \eqref{vz5},\eqref{vz6},\eqref{v3.39} and \eqref{v3.43}, we conclude that
\begin{align}
\frac{\rm d}{{\rm d}t}\bar{\mathcal{E}}(t)+\lambda \bar{\mathcal{D}}
\leq  C\big(\mathcal{E}^{\frac{1}{2}}(t)+\mathcal{E}^{2}(t)\big)\bar{\mathcal{D}}
+C \big(1+\mathcal{E}^{2}\big)\mathcal{D}\bar{\mathcal{E}}.\label{v3.44}
\end{align}
By the uniform a priori estimates obtained in Section 4, we deduce that $\mathcal{E}(t)$ is sufficiently small. Therefore, we obtain that
\begin{align}\frac{\rm d}{{\rm d}t}\bar{\mathcal{E}}(t)+\lambda \bar{\mathcal{D}}
\leq C (1+\mathcal{E}^{2})\mathcal{D}\bar{\mathcal{E}}.\label{v3.444}
\end{align}
Applying Grownwall's inequality to \eqref{v3.444}, we obtain
\begin{align}
\sup_{ 0\leq t<\infty}\bar{\mathcal{E}}(t)+\lambda\int_{0}^{+\infty}\bar{\mathcal{D}}{\rm d}t \leq C \bar{\mathcal{E}}(0)< +\infty. \nonumber
\end{align}
Meanwhile, according to equations \eqref{v1.5}-\eqref{v1.8} and the uniform a priori estimates obtained above, we deduce that,
$$\int_{0}^{+\infty}(\|f_{t}\|^{2}+\|\rho_{t}\|^{2}+\|u_{t}\|^{2})\,{\rm d}t<  +\infty.$$
Thus,
$$\int_{0}^{+\infty}(\|f_{t}\|^{2}_{H^{3}_{x,v}}+\|\rho_{t}\|^{2}_{H^{3}}+\|u_{t}\|^{2}_{H^{3}})\,{\rm d}t< +\infty.$$

On the other hand, we can apply Lemma \ref{vl3.1} to obtain
\begin{align}
\|f_{t}\|^{2}_{H^{3}_{x,v}}+\|\rho_{t}\|^{2}_{H^{3}}+\|u_{t}\|^{2}_{H^{3}}\leq \frac{C}{1+t}. \label{v3.45}
\end{align}

Now, we define the following functionals
\begin{align}
&\tilde{\mathcal{E}}_{0}(f_{t}(t)) :=\sum_{|\alpha|\leq 2}\sum_{i,j}\int \partial^{\alpha}_{x}
(\partial_{x_{j}}b_{i}+\partial_{_{x_{i}}}b_{j})\partial^{\alpha}_{x}\Gamma_{i,j}(\{\mathbf{I}-\mathbf{P}\}f)\, {\rm d}x \nonumber\\
&\qquad\qquad\quad-\sum_{|\alpha|\leq 2}\int \partial^{\alpha}_{x}a\partial^{\alpha}_{x}\nabla_{x}\cdot b \, {\rm d}x ,\nonumber\\
&\tilde{\mathcal{E}}_{1}(t) :=\|f\|^{2} +\|\rho\|^{2}+\|u\|^{2}+\sum_{1\leq |\alpha|\leq 3}\bigg\{\|\partial^{\alpha}f\|^{2} +\Big\|\frac{\sqrt{p'(1+\rho)}}{1+\rho}\partial^{\alpha}\rho\Big\|^{2}
+\|\partial^{\alpha}u\|^{2}\bigg\}\nonumber\\
&\qquad\qquad+\tau_{1}\tilde{\mathcal{E}}_{0}(t)+\tau_{2}\sum_{|\alpha|\leq 2}\int \partial^{\alpha}u\cdot\partial^{\alpha}\nabla \rho \, {\rm d}x , \nonumber\\
&\tilde{\mathcal{D}}_{1}(t) :=\|\nabla (a,b,\rho,u)\|^{2}_{H^{2}}+\|b-u\|^{2}_{H^{3}}
+\sum_{|\alpha|\leq 3}\big\{\|\partial^{\alpha}\{\mathbf{I}-\mathbf{P}\}f\|^{2}_{\nu}+\|\partial^{\alpha}\nabla u\|^{2}\big\},\nonumber\\
&\tilde{\mathcal{E}}_{2}(t) :=\sum_{1\leq k\leq 3}C_{k}\sum_{\substack{|\beta|:=k \\ |\alpha|+|\beta|\leq 3}}\|\partial^{\alpha}_{\beta}\{\mathbf{I}-\mathbf{P}\}f\|^{2},\nonumber\\
&\tilde{\mathcal{D}}_{2}(t) :=\sum_{\substack{1\leq|\beta|\leq 3 \\ |\alpha|+|\beta|\leq 3}}\|\partial^{\alpha}_{\beta}\{\mathbf{I}-\mathbf{P}\}f\|^{2}_{\nu},\nonumber\\
&\tilde{\mathcal{E}}(t) :=\tilde{\mathcal{E}}_{1}(t)+ \tau_{3}\tilde{\mathcal{E}}_{2}(t),\nonumber\\
&\tilde{\mathcal{D}}(t) :=\tilde{\mathcal{D}}_{1}(t)+ \tau_{3}\tilde{\mathcal{D}}_{2}(t).\nonumber
\end{align}
According to the results obtained in Section 4, we finally obtain that
\begin{align}
&\sup_{x\in \mathbb{R}^{3}}\Big\{\sum_{|\alpha|+|\beta|\leq 1}\|\partial^{\alpha}_{\beta}f\|^{2}_{L^{2}_{v}}\Big\}
+\|\rho\|^{2}_{L^{\infty}}+\|u\|^{2}_{L^{\infty}}\nonumber\\
\leq &\sum_{|\alpha|+|\beta|\leq 2}\|\nabla\partial^{\alpha}_{\beta}f\|^{2}+\|\nabla \rho\|^{2}_{H^{2}}+\|\nabla u\|^{2}_{H^{2}}\leq
\tilde{\mathcal{D}}(t)\leq -\frac{\rm d}{{\rm d}t}\tilde{\mathcal{E}}(t) \nonumber\\
\leq& \, C (\|f\|_{H^{3}_{x,v}}+\|(\rho,u)\|_{H^{3}})(\|f_{t}\|_{H^{3}_{x,v}}+\|(\rho_{t},u_{t})\|_{H^{3}})\leq C (1+t)^{-\frac{1}{2}}.\label{v3.46}
\end{align}
This completes the proof of Theorem \ref{vt1.1}.

\subsection{The case of periodic domain}
In this subsection we study the large time behavior of  classical solutions
when $\Omega $ is a spatial periodic domain $\mathbb{T}^{3}$.
We shall use the  uniform a priori  estimates  obtained in Section 4 below.
It follows from \eqref{v4.5} together with \eqref{v4.7} that
$$\frac{\rm d}{{\rm d}t}\mathcal{E}(t) +\lambda \mathcal{D}_{\mathbb{T}}(t) \leq C(\mathcal{E}^{\frac{1}{2}}(t)+\mathcal{E}^{2}(t))\mathcal{D}_{\mathbb{T}}(t).$$
Using the fact that $\mathcal{E}(t)$ is small enough and uniformly in time, and $\mathcal{E}(t)\leq C \mathcal{D}_{\mathbb{T}}(t)$,
 we  then obtain
$$\frac{\rm d}{{\rm d}t}\mathcal{E}(t) +\lambda \mathcal{E}(t) \leq 0$$
for all $t\geq 0.$  This gives the desired exponential decay by applying Gronwall's inequality, and hence
  completes the proof of Theorem \ref{vt1.2}.

\bigskip

\section{A priori estimates of the classical solutions}
In this section, we shall establish  the uniform-in-time a priori estimates in the spatial domain $\Omega=\mathbb{R}^3$ or $\mathbb{T}^3$
which
 have been used in Sections 2 and 3.
We need the following two assumptions:

\begin{enumerate}[(1)]
\leftmargin=12mm
\item  $(f,\rho,u)$ is the smooth solution to the Cauchy problem \eqref{v1.5}-\eqref{v1.8} for $0< t<T$ with a fixed $T>0$;

\item $(f,\rho,u)$ satisfies
\begin{align}
\sup_{0<t<T}\{\|f(t)\|_{H^{4}_{x,v}}+\|(\rho,u)(t)\|_{H^{4}_{x}}\}\leq\delta, \label{v2.2}
\end{align}
where $0<\delta<1 $ is a sufficiently small generic constant.
\end{enumerate}

\smallskip

First, we introduce a  lemma which is useful in the subsequent estimates:
\begin{Lemma}[see \cite{CDM}]\label{vl2.1}
There exists a positive constant $C$, such that for any $f,g\in H^{4 }(\Omega)$ and
any multi-index $\gamma$ with $1\leq |\gamma|\leq 4$,
\begin{align}
\|f\|_{L^{\infty}(\Omega)}&\leq C\|\nabla_{x}f\|_{L^{2}(\Omega)}^{1/2}\|\nabla^{2}_{x}f\|_{L^{2}(\Omega)}^{1/2},\label{v2.3}\\
\|fg\|_{H^{2}(\Omega)}&\leq C \|f\|_{H^{2}(\Omega)}\|\nabla _{x}g\|_{H^{2}(\Omega)},\label{v2.4}\\
\|\partial^{\gamma}_{x}(fg)\|_{L^{2}(\Omega)}&\leq C \|\nabla _{x}f\|_{H^{3}(\Omega)}\|\nabla _{x}g\|_{H^{3}(\Omega)}.\label{v2.5}
\end{align}
\end{Lemma}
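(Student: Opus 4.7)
The three inequalities in Lemma \ref{vl2.1} are all standard Gagliardo--Nirenberg/Moser-type product estimates in three space dimensions, so my plan is to prove each one by a direct combination of Sobolev embeddings, H\"older's inequality, and a scaling (or Fourier) argument. I would dispose of them in the order (1), (2), (3), since (2) and (3) will reuse (1).

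For inequality \eqref{v2.3}, my plan is to first obtain the non-scale-invariant bound $\|f\|_{L^{\infty}}\leq C(\|\nabla f\|_{L^{2}}+\|\nabla^{2}f\|_{L^{2}})$ either by Morrey--Sobolev embedding of $\dot W^{1,3}\cap\dot W^{2,2}$ into $L^{\infty}$ or, more concretely, by writing $\|f\|_{L^{\infty}}\leq\|\hat f\|_{L^{1}}$ and splitting the Fourier integral over $\{|\xi|\leq 1\}$ and $\{|\xi|>1\}$, applying Cauchy--Schwarz with weights $|\xi|^{-1}$ and $|\xi|^{-2}$ respectively (both of whose squares are integrable in $\mathbb{R}^{3}$ against $d\xi$ only in the indicated regions). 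Then I would scale $f\mapsto f(\lambda\,\cdot)$ to rebalance the two norms and optimize in $\lambda$, which yields exactly the product form $\|\nabla f\|_{L^{2}}^{1/2}\|\nabla^{2}f\|_{L^{2}}^{1/2}$. On the torus I would instead work on the zero-mean part (the $L^{\infty}$ bound on the mean of $f$ by $\|\nabla f\|_{L^{2}}$ is immediate via $L^{2}\subset L^{1}$).

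For inequality \eqref{v2.4}, I would expand $\partial^{\alpha}(fg)$ by Leibniz for $|\alpha|\leq 2$ and estimate each term by a H\"older placement chosen so that every factor of $g$ carries at least one derivative. The typical cases are: $\|fg\|_{L^{2}}\leq\|f\|_{L^{3}}\|g\|_{L^{6}}\leq C\|f\|_{H^{2}}\|\nabla g\|_{L^{2}}$ (using $H^{2}\hookrightarrow L^{3}$ and the Sobolev embedding $\dot H^{1}\hookrightarrow L^{6}$); $\|f\nabla^{2}g\|_{L^{2}}\leq\|f\|_{L^{\infty}}\|\nabla^{2}g\|_{L^{2}}\leq C\|f\|_{H^{2}}\|\nabla g\|_{H^{2}}$ (using $H^{2}\hookrightarrow L^{\infty}$); and $\|\nabla f\cdot\nabla g\|_{L^{2}}\leq\|\nabla f\|_{L^{3}}\|\nabla g\|_{L^{6}}\leq C\|f\|_{H^{2}}\|\nabla^{2}g\|_{L^{2}}$. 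The remaining cross terms are analogous.

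Inequality \eqref{v2.5} is the same game, but with $1\leq|\gamma|\leq 4$; here the key point is that $\partial^{\gamma}(fg)$ always produces at least one derivative on at least one factor, so again every term can be split so that both $f$ and $g$ are differentiated at least once. I would bound each resulting product $\|\partial^{\gamma_{1}}f\,\partial^{\gamma_{2}}g\|_{L^{2}}$ by H\"older, placing the factor of lower order in $L^{\infty}$ (controlled via $H^{2}\hookrightarrow L^{\infty}$ or via \eqref{v2.3} applied to $\nabla f$ or $\nabla g$) and the higher-order factor in $L^{2}$. The only mildly delicate case is $|\gamma|=4$ with a (3,1) split, where I would use $\|\nabla^{3}f\|_{L^{3}}\|\nabla g\|_{L^{6}}\leq C\|\nabla f\|_{H^{3}}\|\nabla g\|_{H^{3}}$ via Sobolev interpolation. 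I do not anticipate any real obstacle: the whole lemma is bookkeeping through the embeddings $\dot H^{1}\hookrightarrow L^{6}$, $H^{2}\hookrightarrow L^{\infty}$, $H^{2}\hookrightarrow L^{3}$ valid in $\mathbb{R}^{3}$ and $\mathbb{T}^{3}$, together with the scaling argument for \eqref{v2.3}; the main point to keep in mind is simply to arrange every H\"older split so that no factor of $g$ appears undifferentiated on the right-hand side.
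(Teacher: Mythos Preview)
The paper does not give its own proof of this lemma; it is simply quoted from \cite{CDM}. Your plan via Gagliardo--Nirenberg, Sobolev embeddings, H\"older placements, and the scaling argument for \eqref{v2.3} is the standard route and is correct on $\Omega=\mathbb{R}^{3}$, so there is nothing substantive to compare.

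Two small points to tighten. First, in your sketch for \eqref{v2.5} you say that in the Leibniz expansion ``every term can be split so that both $f$ and $g$ are differentiated at least once''; this is not literally true of the expansion itself (the extreme term $(\partial^{\gamma}f)\,g$ carries no derivative on $g$), but your actual remedy---putting the undifferentiated factor in $L^{\infty}$ and invoking \eqref{v2.3} to manufacture a gradient---is exactly what is needed. Second, your parenthetical for the torus case of \eqref{v2.3}, that the $L^{\infty}$ norm of the mean of $f$ is controlled by $\|\nabla f\|_{L^{2}}$, is false: constants kill the right-hand side of \eqref{v2.3} but not the left, so \eqref{v2.3} as stated cannot hold on $\mathbb{T}^{3}$ without a zero-mean assumption. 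This is a defect of the lemma's statement (carried over from the reference) rather than of your strategy; in the paper these inequalities are only ever applied to derivatives or to quantities with vanishing spatial average.
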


\subsection{The case of the whole space}
In this subsection we deal with the uniform-in-time a priori estimates in the whole space $\Omega=\mathbb{R}^{3}$.
\begin{Proposition}\label{vl2.2}
For smooth solutions of the problem \eqref{v1.5}-\eqref{v1.8}, we have
\begin{align}
&\frac{1}{2}\frac{\rm d}{{\rm d}t}\|(f,\rho,u)(t)\|^{2}+\lambda (\|\{\mathbf{I}-\mathbf{P}\}f\|_{\nu}^{2}+
\|b-u\|^{2}+\|\nabla u\|^{2})
\nonumber\\
\!\!\!\!\!\!\!\!\leq \,&C(\|(\rho,u)\|_{H^{2}}+\|\rho\|_{H^{2}}\|u\|_{H^{2}})  (\|\nabla_{x}(a,b,\rho,u)\|^{2}+\|u-b\|^{2}+\|\{\mathbf{I}-\mathbf{P}\}f\|^{2}_{\nu})\label{v2.6}
\end{align}
for all $0\leq t<T$. 
\end{Proposition}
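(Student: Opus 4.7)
The plan is to derive the basic weighted $L^2$ energy identity for the perturbation system \eqref{v1.5}--\eqref{v1.7} by testing each equation against its natural multiplier and summing. Specifically, I would take the $L^2_{x,v}$ inner product of \eqref{v1.5} with $f$, the $L^2_x$ inner product of \eqref{v1.6} with the weighted density $\frac{p'(1+\rho)}{(1+\rho)^{2}}\rho$ (so the density contribution produces the weighted norm $\|\frac{\sqrt{p'(1+\rho)}}{1+\rho}\rho\|^{2}$ that appears in the higher-order functional $\mathcal{E}_1$, which is equivalent to $\|\rho\|^{2}$ by \eqref{v2.2}), and the $L^2_x$ inner product of \eqref{v1.7} with $u$.

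The dissipation on the left-hand side then arises from the clean interplay of four sources. First, the streaming terms $v\cdot\nabla_x f\cdot f$ and $u\cdot\nabla_v f\cdot f$ vanish after integration by parts (the latter because $u$ is $v$-independent). Second, the linearized Fokker--Planck operator supplies $-\langle\mathcal{L}f,f\rangle\ge\lambda\|\{\mathbf{I}-\mathbf{P}\}f\|_\nu^{2}+\|b\|^{2}$ via \eqref{v2.1}. Third, the viscous term gives $\int\frac{|\nabla u|^{2}}{1+\rho}\,dx\ge\lambda\|\nabla u\|^{2}$ after integrating $\int\frac{\Delta u\cdot u}{1+\rho}\,dx$ by parts and using the positivity of $1+\rho$ from \eqref{v2.2}. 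Fourth, the friction contribution $-(1+a)u+b$ in \eqref{v1.7} tested against $u$ gives $-\|u\|^{2}-\int a|u|^{2}+\int b\cdot u$, which combined with the coupling term $+\int u\cdot b$ that arises from $-u\cdot v\sqrt{M}$ in \eqref{v1.5} tested against $f$, together with the $\|b\|^{2}$ piece from step two, recombines as the perfect square $\|u-b\|^{2}$ (modulo the sign-indefinite $\int a|u|^{2}$, which under \eqref{v2.2} is absorbed into the $\|u-b\|^{2}$ dissipation and the envelope).

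For the density--momentum pressure coupling, the weight $\frac{p'(1+\rho)}{(1+\rho)^{2}}\rho$ is chosen precisely so that the momentum contribution $\int\frac{p'(1+\rho)}{1+\rho}\nabla\rho\cdot u\,dx$ cancels the density contribution $\int\frac{p'(1+\rho)\rho}{1+\rho}\dv u\,dx$ after one integration by parts, leaving a cubic residual of the form $\int\rho\,u\cdot\nabla\bigl(\frac{p'(1+\rho)}{1+\rho}\bigr)dx$ of order $\|u\|_{L^{\infty}}\|\rho\|\|\nabla\rho\|$ that fits the desired right-hand side. What remains on the right-hand side consists entirely of cubic nonlinearities: the transport commutator $-\frac{1}{2}\int\dv u\,|u|^{2}\,dx$, the viscosity commutator $\int\frac{\nabla\rho\cdot\nabla u\cdot u}{(1+\rho)^{2}}\,dx$, the time derivative of the weight $\frac{p'(1+\rho)}{(1+\rho)^{2}}$ acting on $\rho^{2}$ (evaluated through \eqref{v1.6}), and the $\rho$-coupled kinetic nonlinearities $\iint\rho(\mathcal{L}f - u\cdot\nabla_v f + \tfrac12 u\cdot v f + u\cdot v\sqrt{M})f\,dv\,dx$ together with the velocity-moment term $-\frac{1}{2}\iint u\cdot v f^{2}\,dv\,dx$ from the left-hand side of \eqref{v1.5}.

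The main obstacle is estimating this last group of terms within the prescribed envelope $C(\|(\rho,u)\|_{H^{2}}+\|\rho\|_{H^{2}}\|u\|_{H^{2}})(\|\nabla(a,b,\rho,u)\|^{2}+\|u-b\|^{2}+\|\{\mathbf{I}-\mathbf{P}\}f\|_\nu^{2})$, since the $v$-weighted quantities cannot be controlled by $\|f\|^{2}$ alone. The resolution is the macro-micro decomposition $f=\mathbf{P}f+\{\mathbf{I}-\mathbf{P}\}f$ with $\mathbf{P}f=a\sqrt{M}+b\cdot v\sqrt{M}$: direct computation against the Gaussian moments $\int v_i M\,dv=0$ and $\int v_iv_j M\,dv=\delta_{ij}$ reduces the $(\mathbf{P}f)^{2}$ piece of $\iint u\cdot v\,f^{2}\,dv\,dx$ to $2\int a\,u\cdot b\,dx$, whereas the mixed piece $(\mathbf{P}f)(\{\mathbf{I}-\mathbf{P}\}f)$ and the pure $(\{\mathbf{I}-\mathbf{P}\}f)^{2}$ piece are absorbed into $\|\{\mathbf{I}-\mathbf{P}\}f\|_\nu^{2}$ using $\nu(v)\ge|v|$ together with Cauchy--Schwarz. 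The same splitting handles the $\rho$-coupled Fokker--Planck and gradient terms, with the $\rho$-prefactor absorbed by $\|\rho\|_{L^{\infty}}\le C\|\rho\|_{H^{2}}$ via \eqref{v2.3}. For the remaining cubics, the standard tools \eqref{v2.3}--\eqref{v2.5} allow one to pull out one factor of $\|(\rho,u)\|_{H^{2}}$ (or the product $\|\rho\|_{H^{2}}\|u\|_{H^{2}}$) and bound the rest by a combination of $\|\nabla(a,b,\rho,u)\|$, $\|u-b\|$, and $\|\{\mathbf{I}-\mathbf{P}\}f\|_\nu$, yielding \eqref{v2.6}.
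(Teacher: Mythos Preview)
Your proposal is correct and follows essentially the same route as the paper: test each equation against its natural multiplier, assemble the $\|b-u\|^{2}$ dissipation from the friction terms together with the $\|b\|^{2}$ piece coming from $\mathcal{L}\mathbf{P}f=-\mathbf{P}_{1}f$, and use the macro--micro decomposition to reduce every velocity-weighted cubic term to combinations of $\|\nabla(a,b)\|$, $\|u-b\|$, and $\|\{\mathbf{I}-\mathbf{P}\}f\|_{\nu}$. The only difference is at the density equation: the paper tests \eqref{v1.6} against the plain $\rho$ (taking without loss $p'(1)=1$, so that $\int\rho\,\dv u+\int p'(1)\nabla\rho\cdot u=0$ exactly and the cubic remainder is $\int\bigl(\tfrac{p'(1+\rho)}{1+\rho}-p'(1)\bigr)\nabla\rho\cdot u$), whereas you test against the weighted multiplier $\tfrac{p'(1+\rho)}{(1+\rho)^{2}}\rho$; both choices yield the same cancellation structure, but the paper's matches the literal $\|\rho\|^{2}$ in the statement without invoking norm equivalence.
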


\begin{proof}
 Multiplying \eqref{v1.5}-\eqref{v1.7} by $f,\rho,$ and $u$ respectively and then taking integration and summation, we get
 \begin{align}
 &\frac{1}{2}\frac{\rm d}{{\rm d}t}(\|f\|^{2} +\|\rho\|^{2}+\|u\|^{2})+\int\langle -\mathcal{L}\{\mathbf{I}-\mathbf{P}\}f,f\rangle   \, {\rm d}x
 +\int\frac{|\nabla u|^{2}}{1+\rho}\, {\rm d}x +\|b-u\|^{2}\nonumber\\
=&\int u\Big\langle \frac{1}{2}vf,f\Big\rangle   \, {\rm d}x -\int a|u|^{2}\, {\rm d}x -\int \rho\dv u\, {\rm d}x -\int p'(1)\nabla \rho\cdot u\, {\rm d}x \nonumber\\
 &-\int(u\cdot\nabla u)\cdot u\, {\rm d}x -\int\nabla\frac{1}{1+\rho}\nabla u\cdot u\, {\rm d}x -\int\Big(\frac{p'(1+\rho)}{1+\rho}-p'(1)\Big)\nabla\rho \cdot u\, {\rm d}x  \nonumber\\
 &-\frac{1}{2}\int \rho^{2}\dv u\, {\rm d}x +\iint\rho\Big(\mathcal{L}f -u\cdot \nabla_{v}f+\frac{1}{2}u\cdot vf+u\cdot v\sqrt{M}\Big)f\, {\rm d}x  {\rm d}v. \label{v2.7}
\end{align}
By \eqref{v2.1}, we have
$$
\langle -\mathcal{L}\{\mathbf{I}-\mathbf{P}\}f, f\rangle \geq \lambda_{0}|\{\mathbf{I}-\mathbf{P}\}f|^{2}_{\nu}.
$$
Thus,  we only need   to estimate the terms on the right hand side of the equality \eqref{v2.7}.
For the first two terms, by taking the same computation as that in  \cite{CDM}, we get
\begin{align}
&\int u\Big\langle \frac{1}{2}vf,f\Big\rangle   \, {\rm d}x -\int a|u|^{2}\, {\rm d}x \nonumber\\
&\quad \leq C(\|\nabla_{v}u\|_{H^{1}}+\|u\|_{H^{1}})\|\{\mathbf{I}-\mathbf{P}f\}\|_{\nu}^{2}+C\|u\|_{H^{1}}(\|\nabla_{x}(a,b)\|^{2}+\|u-b\|^{2}). \nonumber
\end{align}
Without loss of generality, we can assume that $p'(1)=1$, then, one has
\begin{align}
\int \rho\ \dv u\, {\rm d}x +\int p'(1)\nabla \rho\cdot u\, {\rm d}x =0. \nonumber
\end{align}
For the next four terms, using H\"{o}lder's, Sobolev's inequalities and Lemma \ref{vl2.1}, we have
\begin{align}
\int(u\cdot\nabla u)\cdot u\, {\rm d}x &\leq C\|u\|_{L^{3}}\|\nabla_{x}u\|_{L^{2}}\|u\|_{L^{6}}\leq C\|u\|_{H^{1}}\|\nabla u\|^{2}_{L^{2}},\nonumber\\
\int\nabla\frac{1}{1+\rho}\nabla u\cdot u\, {\rm d}x &\leq C \|u\|_{H^{2}}(\|\nabla \rho\|^{2}+\|\nabla u\|^{2}),\nonumber\\
\int\Big(\frac{p'(1+\rho)}{1+\rho}-p'(1)\Big)\nabla\rho \cdot u\, {\rm d}x & \leq C \|\rho\|_{L^{3}}\|\nabla_{x}\rho\|_{L^{2}}\|u\|_{L^{6}}\nonumber\\
&\leq C\|\rho\|_{H^{1}}(\|\nabla \rho\|^{2}+\|\nabla u\|^{2}),\nonumber\\
\frac{1}{2}\int \rho^{2}\ {\rm div} u\, {\rm d}x &\leq C\|\rho\|_{L^{3}}\|\nabla_{x}u\|_{L^{2}}\|\rho\|_{L^{6}}\nonumber\\
&\leq C\|\rho\|_{H^{1}}(\|\nabla \rho\|^{2}+\|\nabla u\|^{2}).\nonumber
\end{align}
Here, we have used the facts that $\|u\|_{L^{6}}\leq C \|\nabla u\|_{L^{2}}$ and $\|u\|_{L^{3}}\leq C \|u\|_{H^{1}}$.

Using the macro-micro decomposition \eqref{decomp}, we rewrite  the last term as
\begin{align}
&\iint\rho\Big(\mathcal{L}f -u\cdot \nabla_{v}f+\frac{1}{2}u\cdot vf+u\cdot v\sqrt{M}\Big)f\, {\rm d}x  {\rm d}v\nonumber\\
&\qquad=\int\rho\langle -\mathcal{L}\{\mathbf{I}-\mathbf{P}\}f,f\rangle   \, {\rm d}x +\int\rho(u-b)\cdot b\, {\rm d}x  +\frac{1}{2}\iint\rho u\cdot vf^{2}\, {\rm d}x  {\rm d}v.\nonumber
\end{align}
It is easy to see that
\begin{align}
&\int\rho(u-b)\cdot b\, {\rm d}x \leq C\|\rho\|_{L^{3}}\|u-b\|_{L^{2}}\|b\|_{L^{6}}\leq C\|\rho\|_{H^{1}}(\|\nabla b\|^{2}_{L^{2}}+\|u-b\|^{2}_{L^{2}}).\nonumber
\end{align}
Noticing that
\begin{align}
&\Big\langle \frac{1}{2}v_{i}f,f\Big\rangle   =ab_i+\langle v_{i}\mathbf{P}f,\{\mathbf{I}-\mathbf{P}\}f\rangle
  +\Big\langle \frac{1}{2}v_{i},|\{\mathbf{I}-\mathbf{P}\}f|^{2}\Big\rangle,\quad (i=1,2,3),\nonumber
\end{align}
the term $\frac{1}{2}\iint\rho u\cdot vf^{2}\, {\rm d}x  {\rm d}v$ can be estimated as follows:
\begin{align}
\int\rho u \cdot b a \, {\rm d}x &\leq C\|\rho\|_{L^{3}}\|u\|_{L^{3}}\|a\|_{L^{6}}\|b\|_{L^{6}}\nonumber\\
&\leq C\|\rho\|_{H^{1}}\|u\|_{H^{1}}(\|\nabla a\|^{2}+\|\nabla b\|^{2}),\nonumber\\
\int\rho\langle v_{i}\mathbf{P}f,\{\mathbf{I}-\mathbf{P}\}f\rangle   \, {\rm d}x
&\leq C\|\rho u\|_{L^{3}}\|(a,b)\|_{L^{6}}\|\{\mathbf{I}-\mathbf{P}\}f\|\nonumber\\
&\leq C\|\rho\|_{H^{1}}\|u\|_{H^{1}}(\|\nabla (a,b)\|^{2}+\|\{\mathbf{I}-\mathbf{P}\}f\|_{\nu}^{2}),\nonumber\\
\int\rho\big\langle \frac{1}{2}v_{i},|\{\mathbf{I}-\mathbf{P}\}f|^{2}\big\rangle   \, {\rm d}x
&\leq C\|\rho u\|_{L^{\infty}}\|\{\mathbf{I}-\mathbf{P}\}f\|_{\nu}^{2}\nonumber\\
&  \leq C\|\rho\|_{H^{2}}\|u\|_{H^{2}}\|\{\mathbf{I}-\mathbf{P}\}f\|_{\nu}^{2}.\nonumber
\end{align}
Therefore, the last term is bounded by
\begin{align}
&\int\rho\langle -\mathcal{L}\{\mathbf{I}-\mathbf{P}\}f,f\rangle   \, {\rm d}x \nonumber\\
&\qquad +C\big(\|\rho\|_{H^{1}}+\|\rho\|_{H^{2}}\|u\|_{H^{2}}\big)
\big(\|\nabla (a,b)\|^{2}+\|u-b\|^{2}+\|\{\mathbf{I}-\mathbf{P}\}f\|_{\nu}^{2}\big).\nonumber
\end{align}
Plugging all the above estimates   into \eqref{v2.7} and using \eqref{v2.2}, we obtain \eqref{v2.6}.
\end{proof}
\begin{Proposition}\label{vl2.3}
For smooth solutions of the problem \eqref{v1.5}-\eqref{v1.8}, we have
\begin{align}
&\frac{1}{2}\frac{\rm d}{{\rm d}t}\sum_{1\leq |\alpha|\leq 4}\Big\{\|\partial^{\alpha}f\|^{2} +\Big\|\frac{\sqrt{p'(1+\rho)}}{1+\rho}\partial^{\alpha}\rho\Big\|^{2}
+\|\partial^{\alpha}u\|^{2}\Big\}\nonumber\\
&+\lambda \sum_{1\leq |\alpha|\leq 4}\big\{\|\{\mathbf{I}-\mathbf{P}\}\partial^{\alpha}f\|_{\nu}^{2}+
\|\partial^{\alpha}(b-u)\|^{2}+\|\nabla \partial^{\alpha} u\|^{2}\big\}\nonumber\\
\leq \,&C\big\{\|u\|_{H^{4}}+\|\rho\|_{H^{4}}\|u\|_{H^{4}}(1+\|\rho\|^{2}_{H^{4}})+\|\rho\|_{H^{4}}(1+\|\rho\|^{3}_{H^{4}})\big\}
\nonumber\\
&\times \Big\{\|\nabla_{x}(a,b,\rho,u)\|^{2}_{H^{3}}+\sum_{1\leq |\alpha'|\leq 4}\|\{\mathbf{I}-\mathbf{P}\}\partial^{\alpha'}f\|^{2}_{\nu}\Big\}\label{v2.8}
\end{align}
for all $0\leq t<T $.
\end{Proposition}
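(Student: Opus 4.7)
\smallskip

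\noindent\textbf{Proof proposal for Proposition \ref{vl2.3}.}
The plan is to run the standard weighted $L^2$ energy estimate on the differentiated system. For each multi-index $\alpha$ with $1\leq|\alpha|\leq 4$, I would apply $\partial^\alpha$ to \eqref{v1.5}, \eqref{v1.6}, \eqref{v1.7}, then multiply the resulting equations by $\partial^\alpha f$, $\frac{p'(1+\rho)}{(1+\rho)^2}\partial^\alpha \rho$, and $\partial^\alpha u$ respectively, integrate over $\Omega\times\mathbb{R}^3$ (resp.\ $\Omega$), and sum. The weight $\frac{p'(1+\rho)}{(1+\rho)^2}$ for the density equation is dictated by symmetry with the pressure term in the velocity equation: integration by parts in the coupling
\[
\int \frac{p'(1+\rho)}{1+\rho}\nabla\partial^\alpha\rho\cdot\partial^\alpha u\, {\rm d}x + \int (1+\rho)\,\dv\partial^\alpha u\cdot\frac{p'(1+\rho)}{(1+\rho)^2}\partial^\alpha\rho\, {\rm d}x
\]
cancels the top-order transport-pressure exchange, leaving a lower-order commutator term plus $\frac12\int \partial_t\bigl(\frac{p'(1+\rho)}{(1+\rho)^2}\bigr)|\partial^\alpha\rho|^2\, {\rm d}x$, which is controlled by $\|u\|_{H^4}\|\nabla\rho\|_{H^3}^2$ via the transport structure of \eqref{v1.6}.

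The dissipation on the left is extracted exactly as in Proposition \ref{vl2.2}: the operator identity \eqref{v2.1} gives $\lambda\|\{\mathbf{I}-\mathbf{P}\}\partial^\alpha f\|_\nu^2$, the elliptic term in \eqref{v1.7} gives $\lambda\|\nabla\partial^\alpha u\|^2$ after absorbing the $\rho$-dependent denominator using $\|\rho\|_{H^4}\leq\delta$, and the zeroth-order friction $-u(1+a)+b$ together with the coupling to \eqref{v1.5} produces $\|\partial^\alpha(b-u)\|^2$ (by combining $\int \partial^\alpha(-u(1+a)+b)\cdot\partial^\alpha u\, {\rm d}x$ with the contribution $\int\partial^\alpha(u\cdot v\sqrt{M})\partial^\alpha f\, {\rm d}x$ expanded via the macro-micro decomposition, giving $\int\partial^\alpha a\,\partial^\alpha u\cdot\partial^\alpha(b-u)\, {\rm d}x$ type terms).

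The main obstacle is bookkeeping for the nonlinear right-hand sides. I will split each term into its top-order part and a commutator. For the strongly coupled terms $\rho\mathcal{L}f$, $\rho u\cdot\nabla_v f$, $\rho u\cdot v f$ in \eqref{v1.5}, the Leibniz expansion produces a sum over $\beta\leq\alpha$ of $\partial^\beta\rho \cdot \partial^{\alpha-\beta}(\cdot)$; I estimate these with Lemma \ref{vl2.1}, distributing $L^\infty$ onto the factor with fewest derivatives via $H^2\hookrightarrow L^\infty$ and Moser-type inequalities \eqref{v2.4}--\eqref{v2.5}. Each $\mathcal{L}$-involving term is handled by integrating by parts in $v$ to convert derivatives of $\mathcal{L}f$ into $(\nabla_v+v/2)\{\mathbf{I}-\mathbf{P}\}f$, then paired with $\partial^\alpha f=\mathbf{P}\partial^\alpha f+\{\mathbf{I}-\mathbf{P}\}\partial^\alpha f$; the micro part is absorbed by $\epsilon\|\{\mathbf{I}-\mathbf{P}\}\partial^\alpha f\|_\nu^2$, while the macro part contributes $\|\nabla(a,b)\|_{H^3}^2$ up to the small coefficient $\|\rho\|_{H^4}$. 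For the velocity equation, the quasilinear coefficients $(1+\rho)^{-1}\Delta u$ and $(1+\rho)^{\gamma-2}$ are expanded as $1+$ nonlinear remainder; the remainders generate factors of the form $\|\rho\|_{H^4}(1+\|\rho\|_{H^4}^3)$ times $\|\nabla\rho\|_{H^3}^2+\|\nabla u\|_{H^4}^2$ after applying Lemma \ref{vl2.1} and using $H^2\hookrightarrow L^\infty$. Collecting every commutator and nonlinear contribution and using \eqref{v2.2} to absorb small prefactors on the left yields the stated bound \eqref{v2.8}. The chief care-point will be the $\rho$-polynomial prefactors on the right: the term $\|\rho\|_{H^4}(1+\|\rho\|_{H^4}^3)$ arises precisely from the quartic in $\rho$ expansion of $\frac{1}{(1+\rho)^2}\Delta u$ combined with Leibniz on $\partial^\alpha$, so I will track this chain to ensure the final exponent $3$ is not exceeded.
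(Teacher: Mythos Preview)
Your proposal is correct and follows essentially the same route as the paper: differentiate, test against $\partial^\alpha f$, $\frac{p'(1+\rho)}{(1+\rho)^2}\partial^\alpha\rho$, $\partial^\alpha u$, extract the dissipation via \eqref{v2.1} and the viscous term, obtain $\|\partial^\alpha(b-u)\|^2$ from the combination of $-\mathcal{L}\partial^\alpha f$, the $u\cdot v\sqrt{M}$ coupling, and the friction $b-u$ in \eqref{v1.7}, and then estimate the fourteen commutator/nonlinear remainders with Lemma~\ref{vl2.1}. One minor inaccuracy: the highest $\rho$-power on the right does not come from a term $\frac{1}{(1+\rho)^2}\Delta u$ (which is not present) but from the Fa\`a~di~Bruno expansion of $\partial^\alpha\bigl(\tfrac{1}{1+\rho}\bigr)$ and of $\partial^\alpha\bigl(\tfrac{p'(1+\rho)}{1+\rho}\bigr)$ at $|\alpha|=4$, which produces products of up to four first derivatives of $\rho$; this is exactly what yields the factor $\|\rho\|_{H^4}(1+\|\rho\|_{H^4}^3)$ in the paper's estimates of the terms you would label as the pressure and viscous commutators.
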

\begin{proof}
Applying differentiation $\partial^{\alpha} (1\leq |\alpha|\leq 4)$ to the system \eqref{v1.5}-\eqref{v1.7}, we have
\begin{align}
&\partial_{t}(\partial^{\alpha}f)+v\cdot \nabla_{x}(\partial^{\alpha}f) +u\cdot \nabla_{v}(\partial^{\alpha}f)
-\partial^{\alpha}u\cdot v\sqrt{M}-\mathcal{L}\partial^{\alpha}f\nonumber\\
 &\qquad =\frac{1}{2}\partial^{\alpha}[(1+\rho)u\cdot vf]
 +[-\partial^{\alpha},u\cdot \nabla_{v}]f+\partial^{\alpha}\{\rho(\mathcal{L}f -u\cdot \nabla_{v}f+u\cdot v\sqrt{M})\},\label{v2.9}\\
 &\partial_{t}(\partial^{\alpha}\rho)+u\cdot \nabla \partial^{\alpha}\rho+(1+\rho)\dv \partial^{\alpha}u
 =[-\partial^{\alpha},\rho \nabla_{x}\cdot]u+[-\partial^{\alpha}, u \cdot\nabla_{x}]\rho,\label{v2.10}\\
 &\partial_{t}(\partial^{\alpha}u)+u\cdot \nabla (\partial^{\alpha} u)+\frac{p'(1+\rho)}{1+\rho}\nabla\partial^{\alpha}\rho
 -\partial^{\alpha}\Big(\frac{1}{1+\rho}\Delta u\Big)-\partial^{\alpha}(b-u)\nonumber\\
 &\qquad=[-\partial^{\alpha},u\cdot\nabla_{x}]u+\Big[-\partial^{\alpha}, \frac{p'(1+\rho)}{1+\rho}\nabla_{x}\Big]\rho
 -\partial^{\alpha}(ua),\label{v2.11}
\end{align}
where $[A,B]$ denotes the commutator $AB-BA$\, for two operators $A$ and $B.$
Now, multiplying \eqref{v2.9}-\eqref{v2.11} by $\partial^{\alpha}f,\frac{p'(1+\rho)}{(1+\rho)^{2}}\partial^{\alpha}\rho$, and $\partial^{\alpha}u$
 respectively and then taking integration and summation, we have
\begin{align}
&\frac{1}{2}\frac{\rm d}{{\rm d}t}\bigg\{\|\partial^{\alpha}f\|^{2} +\Big\|\frac{\sqrt{p'(1+\rho)}}{1+\rho}\partial^{\alpha}\rho\Big\|^{2}
+\|\partial^{\alpha}u\|^{2}\bigg\}\nonumber\\
&+\int\langle -\mathcal{L}\{\mathbf{I}-\mathbf{P}\}\partial^{\alpha}f,\partial^{\alpha}f\rangle   \, {\rm d}x
 +\int\frac{1} {1+\rho}|\nabla(\partial^{\alpha}u)|^{2}\, {\rm d}x +\|\partial^{\alpha}(b-u)\|^{2}\nonumber\\
=\,&\int\langle [-\partial^{\alpha},u\cdot \nabla_{v}]f,\partial^{\alpha}f\rangle
 \, {\rm d}x +\int\frac{1}{2}\langle \partial^{\alpha}[(1+\rho)u\cdot vf],\partial^{\alpha}f\rangle   \, {\rm d}x
\nonumber\\
&+\int[-\partial^{\alpha},\rho \nabla_{x}\cdot]u\frac{p'(1+\rho)}{(1+\rho)^{2}}\partial^{\alpha}\rho \, {\rm d}x
+\int[-\partial^{\alpha}, u \cdot\nabla_{x}]\rho\frac{p'(1+\rho)}{(1+\rho)^{2}}\partial^{\alpha}\rho \, {\rm d}x
\nonumber\\
&+\int[-\partial^{\alpha},u\cdot\nabla_{x}]u\partial^{\alpha}u\, {\rm d}x -\frac{1}{2}\int|\partial^{\alpha}u|^{2}\ \dv u\, {\rm d}x \nonumber\\
&+\int[-\partial^{\alpha}, \frac{p'(1+\rho)}{1+\rho}\nabla_{x}]\rho\partial^{\alpha}u\, {\rm d}x
+\int\partial^{\alpha}\rho\partial^{\alpha}u\cdot\nabla\frac{p'(1+\rho)}{1+\rho}\, {\rm d}x\nonumber\\
& -
\frac{1}{2}\int|\partial^{\alpha}\rho|^{2}\dv\Big(\frac{p'(1+\rho)}{(1+\rho)^{2}}u\Big)\, {\rm d}x -\int\partial^{\alpha}(ua)\partial^{\alpha}u\, {\rm d}x \nonumber\\
&-\sum_{i=1}^{3}\int\nabla\frac{1}{1+\rho}\nabla\partial^{\alpha}u_{i}\partial^{\alpha}u_{i}\, {\rm d}x -
\sum_{1\leq\beta\leq\alpha}c_{\alpha,\beta}\int\partial^{\beta}\Big(\frac{1}{1+\rho}\Big)\partial^{\alpha-\beta}\Delta u\partial^{\alpha}u\, {\rm d}x \nonumber\\
&+\int\langle \partial^{\alpha}\{\rho(\mathcal{L}f -u\cdot \nabla_{v}f+u\cdot v\sqrt{M})\},\partial^{\alpha}f\rangle
  \, {\rm d}x +\frac{1}{2}\int\partial_{t}\frac{p'(1+\rho)}{(1+\rho)^{2}}|\partial^{\alpha}\rho|^{2}\, {\rm d}x \nonumber\\
:=\,&\sum_{j=1}^{14}I_{j}, \label{v2.12}
\end{align}
where $C_{\alpha,\beta}$ are constants depending only on $\alpha$ and $\beta$. Each term in \eqref{v2.12} can be estimated as follows. For $I_{1},I_{2},I_{5}$, and $I_{10}$,
we can carry out similar arguments to the proof of Lemma 2.3 in \cite{CDM} to obtain (the details is omitted here)
\begin{align}
I_{1}\leq\,& C\|\nabla u\|_{H^{3}}\|\nabla_{x}f\|_{L^{2}_{v}(H^{3}_{x})}\|\nabla_{v}\partial^{\alpha}f\|,\nonumber\\
I_{2}\leq\,& C(1+\|\rho\|_{H^{4}})\|\nabla u\|_{H^{3}}\|\nabla_{x}f\|_{L^{2}_{v}(H^{3}_{x})}\|v\partial^{\alpha}f\|,\nonumber\\
I_{5}\leq\,& C\|\nabla u\|_{H^{3}}^{2}\|\partial^{\alpha}u\|,\nonumber\\
I_{10}\leq\,& C\|\nabla u\|_{H^{3}}\|\nabla a\|_{H^{3}}\|\partial^{\alpha}u\|.\nonumber
\end{align}
Using H\"{o}lder's, Sobolev's, and Young's inequalities, we easily get the following bounds:
\begin{align}
I_{3}+I_{4}&\leq C\|\nabla u\|_{H^{3}}\|\nabla \rho\|_{H^{3}}\|\partial^{\alpha}\rho\|\leq C\|\nabla u\|_{H^{3}}\|\nabla \rho\|^{2}_{H^{3}},\nonumber\\
I_{6}&\leq C\|\ \dv u\|_{L^{\infty}}\|\partial^{\alpha}u\|^{2}\leq C\|\nabla u\|_{H^{3}}\|\partial^{\alpha}u\|^{2},\nonumber\\
I_{7}&\leq C(\|\nabla \rho\|^{2}_{H^{3}}+\|\nabla \rho\|^{5}_{H^{3}})\|\nabla u\|_{H^{3}},\nonumber\\
I_{8}&\leq C\|u\|_{H^{4}}\|\nabla \rho\|^{2}_{H^{3}},\nonumber\\
I_{9}&\leq C\big(\|\ \dv u\|_{L^{\infty}}+\| u\|_{L^{\infty}}\| \nabla \rho\|_{L^{\infty}}\big)\|\partial^{\alpha}\rho\|^{2}\nonumber\\
&\leq C(1+\|\rho\|_{H^{3}})\|u\|_{H^{3}}\|\partial^{\alpha}\rho\|^{2},\nonumber\\
I_{11}&\leq C\|\nabla \rho\|_{H^{2}}\|\nabla\partial^{\alpha}u\|\|\partial^{\alpha}u\|
\leq C_{\epsilon}\|\nabla \rho\|_{H^{2}}^{2}\|\partial^{\alpha}u\|^{2}+\epsilon\|\nabla\partial^{\alpha}u\|^{2},\nonumber\\
I_{12}&\leq C\|\nabla \rho\|_{H^{2}}\|\nabla\partial^{\alpha}u\|\|\partial^{\alpha}u\|+
 C\big(\|\nabla \rho\|_{H^{3}}+\|\nabla \rho\|^{4}_{H^{3}}\big)\|\nabla u\|^{2}_{H^{3}}\nonumber\\
 &\leq C_{\epsilon}\big(\|\nabla \rho\|_{H^{3}}+\|\nabla \rho\|^{4}_{H^{3}}\big)\|\nabla u\|_{H^{3}}^{2}+\epsilon\|\nabla\partial^{\alpha}u\|^{2}\nonumber
\end{align}
with $\epsilon>0$ a small constant.

By means of \eqref{v1.5} and \eqref{v2.2}, one has
$$\sup_{0\leq t<T , x \in\mathbb{R}^{3}}|\partial_{t}\rho(t,x)|\leq(1+\|\rho\|_{L^{\infty}})\|\ \dv u\|_{L^{\infty}}
+\| u\|_{L^{\infty}}\|\nabla \rho\|_{L^{\infty}}\leq C\|u\|_{H^{3}}.$$
Then,  the term $I_{14}$  can be controlled by the following bound
$$C\|\rho_{t}\|_{L^{\infty}}\|\partial^{\alpha}\rho\|^{2}\leq C\|u\|_{H^{3}}\|\partial^{\alpha}\rho\|^{2}.$$

Now we   estimate the term $I_{13}$. We have
\begin{align}
\iint-\partial^{\alpha}(\rho u\cdot \nabla_{v}f)\partial^{\alpha}f\, {\rm d}x  {\rm d}v
&=\iint\partial^{\alpha}(\rho u f)\cdot\nabla_{v}\partial^{\alpha}f\, {\rm d}x  {\rm d}v\nonumber\\
& \leq C \|\nabla\rho\|_{H^{3}}\|\nabla u\|_{H^{3}}\|\nabla_{x} f\|_{L^{2}_{v}(H^{3}_{x})}\|\nabla_{v} \partial^{\alpha}f\|,\nonumber\\
\iint\partial^{\alpha}(\rho u)\cdot v\sqrt{M}\partial^{\alpha}f\, {\rm d}x  {\rm d}v
&\leq C\|\partial^{\alpha}(\rho u)\|\|v\sqrt{M}\partial^{\alpha}f\|\nonumber\\
& \leq C \|\nabla\rho\|_{H^{3}}\|\nabla u\|_{H^{3}}\|\partial^{\alpha}f\|_{\nu},\nonumber\\
\iint\partial^{\alpha}(\rho \mathcal{L}f )\partial^{\alpha}f\, {\rm d}x  {\rm d}v
&=-\iint\partial^{\alpha}(\rho g)\partial^{\alpha}g\, {\rm d}x  {\rm d}v\nonumber\\
&\leq C\|\nabla \rho\|_{H^{3}}\|\nabla g\|^{2}_{L^{2}_{v}(H^{3}_{x})}\nonumber\\
&\leq C\|\nabla \rho\|_{H^{3}} \sum_{1\leq |\alpha'|\leq 4}
\|\partial^{\alpha'}f\|_{\nu}^{2}\nonumber
\end{align}
with $g=\sqrt{M}\nabla_{v}(M^{-\frac{1}{2}}f)$. Thus, $I_{13}$ can be bounded by
$$C \big(\|\rho\|_{H^{4}}+\|\rho\|_{H^{4}}\|u\|_{H^{4}}\big)
\Big\{\|\nabla_{x}(a,b,u)\|^{2}_{H^{3}}+\sum_{1\leq |\alpha'|\leq 4}\|\{\mathbf{I}-\mathbf{P}\}\partial^{\alpha'}f\|^{2}_{\nu}\Big\}.$$

Plugging   the estimates on $I_{i}\,(1\leq i\leq14)$ into \eqref{v2.12} and taking summation over $1\leq |\alpha|\leq 4$, we obtain \eqref{v2.8}.
\end{proof}

In order to get the energy dissipation rate $\|\nabla_{x}(a,b)\|_{H^{3}}$, we need to  study the equations satisfied by $a$ and $b$.
We follow some ideas developed in \cite{CDM} where the lower order estimates similar to Proposition \ref{vl2.4} below were obtained.
It is easy to verify that $a$ and $b$ satisfy the following equations:
\begin{align}
&\partial_{t}a+\dv b=0,  \label{v2.13}  \\
&\partial_{t}b_{i}+\partial_{x_{i}}a +\sum_{j}\partial_{x_{j}}\Gamma_{i,j}(\{\mathbf{I}-\mathbf{P}\}f)
=-(1+\rho)b_{i}+(1+\rho)u_{i}(1+a),\label{v2.14}  \\
&\partial_{x_{j}}b_{i}+\partial_{x_{i}}b_{j}-(1+\rho)(u_{i}b_{j}+u_{j}b_{i})
=-\partial_{t}\Gamma_{i,j}\{\mathbf{I}-\mathbf{P}\}f +\Gamma_{i,j}(l+r+s) \label{v2.15}
\end{align}
for $1\leq i,j\leq3$, where $\Gamma_{i,j}$ is the moment functional defined by $$\Gamma_{i,j}=\langle (v_{i}v_{j}-1)\sqrt{M},g\rangle   $$
for any $g=g(v)$, and $l,r,s$ are defined respectively by
\begin{align}
&l:=-v\cdot \nabla_{x}\{\mathbf{I}-\mathbf{P}\}f+\mathcal{L}\{\mathbf{I}-\mathbf{P}\}f,\nonumber\\
&r:=-u\cdot\nabla_{v}\{\mathbf{I}-\mathbf{P}\}f+\frac{1}{2}u\cdot v\{\mathbf{I}-\mathbf{P}\}f,\nonumber\\
&s:=\rho M^{-\frac{1}{2}}\nabla_{v}\cdot\Big(\frac{v}{2}\sqrt{M}\{\mathbf{I}-\mathbf{P}\}f
+\sqrt{M}\{\mathbf{I}-\mathbf{P}\}f-u\sqrt{M}\{\mathbf{I}-\mathbf{P}\}f\Big).\nonumber
\end{align}
In fact, \eqref{v2.13} and \eqref{v2.14} can be obtained straightforwardly by  multiplying \eqref{v1.5} by $\sqrt{M}$ and
$v_i\sqrt{M}\,(1\leq i\leq 3)$ respectively and then taking the velocity integration over $\mathbb{R}^3$. To obtain \eqref{v2.15},
we can  rewrite \eqref{v1.5} as
\begin{align*}
   &\partial_{t}\mathbf{P}f+v\cdot \nabla_{x}\mathbf{P}f +u\cdot \nabla_{v}\mathbf{P}f-\frac{1}{2}u\cdot v\mathbf{P}f
   +\mathbf{P}_1f=-\partial_t\{\mathbf{I}-\mathbf{P}\}f+l+r+s,
 \end{align*}
then apply $\Gamma_{ij}$ to it and use \eqref{v2.13}.

We have the following estimate. 

\begin{Proposition}\label{vl2.4}
For smooth solutions of the problem \eqref{v1.5}-\eqref{v1.8}, we have
\begin{align}
\frac{\rm d}{{\rm d}t}\mathcal{E}_{0}(t)+\lambda \|\nabla_{x}(a,b)\|^{2}_{H^{3}}
\leq\, & C\big(\|\{\mathbf{I}-\mathbf{P}\}f\|^{2}_{L^{2}_{v}(H^{4}_{x})}
+\|u-b\|^{2}_{H^{3}}\big)
\nonumber\\
& +C\big(\|\rho,u\|^{2}_{H^{3}}+\|\rho\|^{2}_{H^{3}}\|u\|^{2}_{H^{3}}\big)\nonumber\\
&\times\big(\|\nabla_{x}\{\mathbf{I}-\mathbf{P}\}f\|^{2}_{L^{2}_{v}(H^{3}_{x})}+\|u-b\|^{2}_{H^{3}}+\|\nabla_{x}(a,b)\|^{2}_{H^{3}}\big)\label{v2.17}
\end{align}
for all $0\leq t<T $.
\end{Proposition}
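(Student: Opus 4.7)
The plan is to differentiate the functional $\mathcal{E}_0(t)$ in time, substitute the evolution equations \eqref{v2.13}--\eqref{v2.15} for $a$, $b$, and $\Gamma_{i,j}(\{\mathbf{I}-\mathbf{P}\}f)$, and extract the full dissipation $\|\nabla_x(a,b)\|^2_{H^3}$. For each $|\alpha|\leq 3$, apply $\partial^\alpha_x$ to \eqref{v2.13}--\eqref{v2.15} and test against the quantities dictated by the structure of $\mathcal{E}_0$. Concretely, differentiating the second part $-\int \partial^\alpha a\,\partial^\alpha \nabla\cdot b\,dx$ and using \eqref{v2.13} to replace $\partial_t a = -\nabla\cdot b$ yields a positive $\|\partial^\alpha \nabla\cdot b\|^2$, while using \eqref{v2.14} for $\partial_t b$ (after one integration by parts) produces the favorable $-\|\partial^\alpha \nabla a\|^2$, together with a cross term $\int \partial^\alpha \partial_i a\,\partial^\alpha \partial_j \Gamma_{i,j}(\{\mathbf{I}-\mathbf{P}\}f)\,dx$ absorbable by $\epsilon\|\partial^\alpha \nabla a\|^2 + C_\epsilon \|\{\mathbf{I}-\mathbf{P}\}f\|^2_{L^2_v(H^4_x)}$, plus a friction contribution from $-(1+\rho)b_i + (1+\rho)u_i(1+a)$ that yields $\|u-b\|^2_{H^3}$ with nonlinear corrections.

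For the $\nabla b$ dissipation, differentiate the first part of $\mathcal{E}_0$ and use \eqref{v2.15} for $\partial_t\Gamma_{i,j}(\{\mathbf{I}-\mathbf{P}\}f)$. The leading piece $-(\partial_j b_i+\partial_i b_j)$ yields, after summation in $i,j$ and integration by parts, the Korn-type identity $\sum_{i,j}\|\partial^\alpha(\partial_j b_i+\partial_i b_j)\|^2 = 2\|\partial^\alpha \nabla b\|^2 + 2\|\partial^\alpha\nabla\cdot b\|^2$ on $\mathbb{R}^3$. The remaining piece involving $\partial_j\partial_t b_i$ is handled by substituting \eqref{v2.14}: the contribution $-\partial_j\partial_i a$ is absorbed via $\epsilon\|\partial^\alpha \nabla a\|^2$ after IBP, and the higher-order $\Gamma$-terms fall into $\|\{\mathbf{I}-\mathbf{P}\}f\|^2_{L^2_v(H^4_x)}$. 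A linear combination of the two bounds, with the coefficient of the first part chosen so that the $-2\|\partial^\alpha\nabla\cdot b\|^2$ from the Korn identity dominates the spurious $+\|\partial^\alpha \nabla\cdot b\|^2$ above, gives the net dissipation $-\lambda\|\partial^\alpha \nabla(a,b)\|^2$; summing over $|\alpha|\leq 3$ produces the claimed $\lambda\|\nabla_x(a,b)\|^2_{H^3}$.

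The remaining error terms split into linear pieces containing $u-b$ and $\{\mathbf{I}-\mathbf{P}\}f$ that match the first two quantities on the right-hand side of \eqref{v2.17}, and nonlinear pieces arising from the $(1+\rho)$-weights and from $\Gamma_{i,j}(r)$, $\Gamma_{i,j}(s)$, which carry factors of $u$, $\rho$, $\nabla_v\{\mathbf{I}-\mathbf{P}\}f$. These nonlinear pieces are controlled via Lemma \ref{vl2.1}, H\"older's inequality, and the Sobolev embedding $H^2\hookrightarrow L^\infty$, combined with the smallness hypothesis \eqref{v2.2}; they collect exactly into the factor $\bigl(\|(\rho,u)\|^2_{H^3}+\|\rho\|^2_{H^3}\|u\|^2_{H^3}\bigr)$ multiplying the full dissipation on the right of \eqref{v2.17}.

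The main technical obstacle is the derivative accounting in the cross terms: with $|\alpha|=3$ acting on $b$ plus an additional $\partial_j$ arising from \eqref{v2.14} or \eqref{v2.15}, one is forced to accommodate four $x$-derivatives on $\{\mathbf{I}-\mathbf{P}\}f$, which is precisely why the right-hand side of \eqref{v2.17} admits $\|\{\mathbf{I}-\mathbf{P}\}f\|^2_{L^2_v(H^4_x)}$ rather than a lower-order norm. A secondary (but tedious) issue is the Leibniz expansion of the $(1+\rho)^{-1}$-weighted terms and the contribution $\Gamma_{i,j}(s)$, which couples $\rho$ with $\{\mathbf{I}-\mathbf{P}\}f$ at the top derivative order; these must be split into principal and perturbative parts so that the products $\|(\rho,u)\|^2_{H^3}$ and $\|\rho\|^2_{H^3}\|u\|^2_{H^3}$ appear cleanly and the perturbative remainders are reabsorbed by the dissipation before closing the estimate.
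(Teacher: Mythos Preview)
Your proposal is correct and follows essentially the same approach as the paper: both exploit the macroscopic system \eqref{v2.13}--\eqref{v2.15} together with the Korn-type identity $\sum_{i,j}\|\partial^\alpha(\partial_i b_j+\partial_j b_i)\|^2=2\|\partial^\alpha\nabla b\|^2+2\|\partial^\alpha\nabla\cdot b\|^2$ to extract $\|\nabla_x(a,b)\|^2_{H^3}$, and both control the nonlinear remainders via Lemma~\ref{vl2.1} and Sobolev embeddings. The only organizational difference is that the paper starts from the dissipation side (writing $\sum_{i,j}\|\partial^\alpha(\partial_i b_j+\partial_j b_i)\|^2$ and $\|\partial^\alpha\nabla a\|^2$ and letting the time derivative of $\mathcal{E}_0$ emerge), whereas you differentiate $\mathcal{E}_0$ directly and substitute; the two are equivalent.

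Two small remarks. First, the coefficients in $\mathcal{E}_0$ are already fixed in \eqref{v2.16}, so there is no ``coefficient of the first part'' to choose; with the given weights the arithmetic closes as $-2\|\partial^\alpha\nabla b\|^2-\|\partial^\alpha\nabla\cdot b\|^2-\|\partial^\alpha\nabla a\|^2$ (modulo $\epsilon$-losses), exactly as you need. Second, the smallness hypothesis \eqref{v2.2} is not used in the proof of \eqref{v2.17} itself---the nonlinear factors $\|(\rho,u)\|_{H^3}^2$ and $\|\rho\|_{H^3}^2\|u\|_{H^3}^2$ are simply left on the right-hand side---so you can drop that reference.
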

\begin{proof}
It follows from \eqref{v2.15} that
\begin{align}
&\sum_{i,j}\|\partial^{\alpha}(\partial_{i}b_{j}+\partial_{j}b_{i})\|^{2}\nonumber\\
=\,&\sum_{i,j}\int\partial^{\alpha}(\partial_{i}b_{j}+\partial_{j}b_{i})
\nonumber\\
& \times\partial^{\alpha} [(1+\rho)(u_{i}b_{j}+u_{j}b_{i})
-\partial_{t}\Gamma_{i,j}(\{\mathbf{I}-\mathbf{P}\}f)+\Gamma_{i,j}(l+r+s)]\, {\rm d}x \nonumber\\
\,=&-\frac{\rm d}{{\rm d}t}\sum_{i,j}\int\partial^{\alpha}(\partial_{i}b_{j}+\partial_{j}b_{i})\partial^{\alpha}\Gamma_{i,j}(\{\mathbf{I}-\mathbf{P}\}f)\, {\rm d}x \nonumber\\
&+\sum_{i,j}\int\partial^{\alpha}(\partial_{i}\partial_{t}b_{j}+\partial_{j}\partial_{t}b_{i})\partial^{\alpha}\Gamma_{i,j}(\{\mathbf{I}-\mathbf{P}\}f)\, {\rm d}x \nonumber\\
&+\sum_{i,j}\int\partial^{\alpha}(\partial_{i}b_{j}+\partial_{j}b_{i})\partial^{\alpha}
[(1+\rho)(u_{i}b_{j}+u_{j}b_{i})+\Gamma_{i,j}(l+r+s)]\, {\rm d}x.\label{v2.18}
\end{align}
Applying \eqref{v2.14}, Lemma \ref{vl2.1} and Young's inequality, we obtain
\begin{align}
&\!\!\!\!\!\!\!\!\!\!\!\!\!\!\!\!\!\!\!\!\!\!\!\!
\sum_{i,j}\int\partial^{\alpha}(\partial_{i}\partial_{t}b_{j}+\partial_{j}\partial_{t}b_{i})\partial^{\alpha}\Gamma_{i,j}(\{\mathbf{I}-\mathbf{P}\}f)\, {\rm d}x\nonumber\\
\leq\, & \epsilon \|\nabla_{x}a\|^{2}_{H^{3}}
+C_{\epsilon}\|\nabla_{x}\{\mathbf{I}-\mathbf{P}\}f\|^{2}_{L^{2}_{v}(H^{3}_{x})}\nonumber\\
&+C(1+\|\rho\|^{2}_{H^{3}})(\|u-b\|^{2}_{H^{3}}+\|u\|^{2}_{H^{2}}\|\nabla_{x}a\|^{2}_{H^{2}}) \nonumber
\end{align}
with $\epsilon >0$  sufficiently small. For the final term on the right hand side of \eqref{v2.18}, we have the following estimate:
\begin{align}
&\!\!\!\!\!\!\!\!\!\!\!\!\!\!\!\!\!\!\!\!\!\!\!\!
\sum_{i,j}\int\partial^{\alpha}(\partial_{i}b_{j}+\partial_{j}b_{i})\partial^{\alpha}
[(1+\rho)(u_{i}b_{j}+u_{j}b_{i})+\Gamma_{i,j}(l+r+s)]\, {\rm d}x\nonumber\\
\leq\,& \frac{1}{2}\sum_{i,j}\|\partial^{\alpha}(\partial_{i}b_{j}
+\partial_{j}b_{i})\|^{2}\nonumber\\
&
+C\sum_{i,j}(\|\partial^{\alpha}(1+\rho)(u_{i}b_{j}+u_{j}b_{i})\|^{2}+\|\partial^{\alpha}\Gamma_{i,j}(l+r+s)\|^{2}).\nonumber
\end{align}
According to Lemma \ref{vl2.1}, the definition of $\Gamma_{i,j}$, and the expressions of $l$ and $r$, we get
\begin{align}
\sum_{i,j}\|\partial^{\alpha}(u_{i}b_{j}+u_{j}b_{i})\|^{2}&\leq C(1+\|\rho\|^{2}_{H^{3}})\|u\|^{2}_{H^{3}}\|\nabla_{x}b\|^{2}_{H^{3}},\nonumber\\
\sum_{i,j}\|\partial^{\alpha}\Gamma_{i,j}(l)\|^{2}&\leq C\|\{\mathbf{I}-\mathbf{P}\}f\|^{2}_{L^{2}_{v}(H^{4})},\nonumber\\
\sum_{i,j}\|\partial^{\alpha}\Gamma_{i,j}(r)\|^{2}&\leq C\|u\|^{2}_{H^{3}}\|\nabla_{x}\{\mathbf{I}-\mathbf{P}\}f\|^{2}_{L^{2}_{v}(H^{3})}.\nonumber
\end{align}
For $\Gamma_{i,j}(s)$, we have the following estimate:
\begin{align}
\Gamma_{i,j}(s)=-\rho\int\{v_{i}v_{j}+(v_{i}\partial_{v_{j}}+v_{i}\partial_{v_{j}})-(u_{i}v_{j}+u_{j}v_{i})\}\sqrt{M}\{\mathbf{I}-\mathbf{P}\}f {\rm d}v. \label{v2.19}
\end{align}
Now, we deal with the terms on the right hand side  of \eqref{v2.19}. First, we have
\begin{align}
\Big\|\partial^{\alpha}\int\rho u_{j}v_{i}\sqrt{M}\{\mathbf{I}-\mathbf{P}\}f {\rm d}v\Big\|^{2}
&=\int\Big|\int v_{i}\sqrt{M}\partial^{\alpha}(\rho u_{j}\{\mathbf{I}-\mathbf{P}\}f) {\rm d}v\Big|^{2}\, {\rm d}x \nonumber\\
&\leq C\int\int |\partial^{\alpha}(\rho u_{j}\{\mathbf{I}-\mathbf{P}\}f)|^{2}\, {\rm d}x  {\rm d}v\nonumber\\
&\leq C\|\rho\|^{2}_{H^{3}}\|u\|^{2}_{H^{3}}\|\nabla_{x}\{\mathbf{I}-\mathbf{P}\}f\|^{2}_{L^{2}_{v}(H_{x}^{3})}.\nonumber
\end{align}
Similarly we obtain that
\begin{align}
\Big\|\partial^{\alpha}\int\rho u_{i}v_{j}\sqrt{M}\{\mathbf{I}-\mathbf{P}\}f {\rm d}v\Big\|^{2}
&\leq C\|\rho\|^{2}_{H^{3}}\|u\|^{2}_{H^{3}}\|\nabla_{x}\{\mathbf{I}-\mathbf{P}\}f\|_{L^{2}_{v}(H_{x}^{3})},\nonumber\\
\Big\|\partial^{\alpha}\int\rho v_{i}v_{j}\sqrt{M}\{\mathbf{I}-\mathbf{P}\}f {\rm d}v\Big\|^{2}
&\leq C\|\rho\|^{2}_{H^{3}}\|\nabla_{x}\{\mathbf{I}-\mathbf{P}\}f\|^{2}_{L^{2}_{v}(H_{x}^{3})},\nonumber\\
\Big\|\partial^{\alpha}\int\rho (v_{i}\partial_{v_{j}}+v_{i}\partial_{v_{j}})\sqrt{M}\{\mathbf{I}-\mathbf{P}\}f {\rm d}v\Big\|^{2}
&\leq C\|\rho\|^{2}_{H^{3}}\|\nabla_{x}\{\mathbf{I}-\mathbf{P}\}f\|^{2}_{L^{2}_{v}(H_{x}^{3})}.\nonumber
\end{align}
Notice that
$$\sum_{i,j}\|\partial^{\alpha}(\partial_{i}b_{j}+\partial_{j}b_{i})\|^{2}=2\|\nabla_{x}\partial^{\alpha}b\|^{2}+2\|\nabla_{x}\cdot\partial^{\alpha}b\|^{2}.$$
Using the above equality and plugging the above estimates into \eqref{v2.18}, and then taking summation over $|\alpha|\leq 3$, one gets
\begin{align}
&\frac{\rm d}{{\rm d}t}\sum_{|\alpha|\leq3}\sum_{i,j}\int\partial^{\alpha}(\partial_{i}b_{j}+\partial_{j}b_{i})\partial^{\alpha}\Gamma_{i,j}(\{\mathbf{I}-\mathbf{P}\}f)\, {\rm d}x
+\|\nabla_{x}\partial^{\alpha}b\|^{2}+\|\nabla_{x}\cdot\partial^{\alpha}b\|^{2}\nonumber\\
\leq\,&\epsilon \|\nabla a\|^{2}_{H^{3}}+C_{\epsilon}\|\{\mathbf{I}-\mathbf{P}\}f\|^{2}_{L^{2}_{x}(H^{4}_{x})}
+C\|u-b\|^{2}_{H^{3}}
+C\big(\|(\rho,u)\|^{2}_{H^{3}}+\|\rho\|^{2}_{H^{3}}\|u\|^{2}_{H^{3}}\big)\nonumber\\
&\times\big(\|\nabla_{x}\{\mathbf{I}-\mathbf{P}\}f\|^{2}_{L^{2}_{v}(H^{3}_{x})}+\|u-b\|^{2}_{H^{3}}+\|\nabla_{x}(a,b)\|^{2}_{H^{3}}\big).\label{v2.20}
\end{align}
On the other hand, by means of \eqref{v2.13} and \eqref{v2.14}, it follows that
\begin{align}
&\!\!\!\!\!\!\!\!\!\|\partial^{\alpha}\nabla a\|^{2}
=-\frac{\rm d}{{\rm d}t}\int\partial^{\alpha}a\partial^{\alpha}\dv b\, {\rm d}x +\|\partial^{\alpha}\dv b\|^{2}\nonumber\\
&\qquad+\sum_{i}\int\partial^{\alpha}\partial_{i}a
 \partial^{\alpha}\Big\{(1+\rho)(u_{i}-b_{i})-\sum_{j}\partial_{j}\Gamma_{i,j}\{\mathbf{I}-\mathbf{P}\}f
+(1+\rho)u_{i}a\Big\}\, {\rm d}x \nonumber\\
\leq& -\frac{\rm d}{{\rm d}t}\int\partial^{\alpha}a\partial^{\alpha}\dv b\, {\rm d}x +\|\partial^{\alpha}\dv b\|^{2}
+\frac{1}{2}\|\partial^{\alpha}\nabla a\|^{2}
+C\|\nabla_{x}\{\mathbf{I}-\mathbf{P}\}f\|^{2}_{L^{2}_{v}(H^{3}_{x})}\nonumber\\
&  +C(1+\|\rho\|^{2}_{H^{3}})\big\{\|u-b\|^{2}_{H^{3}}+\|u\|^{2}_{H^{3}}\|\nabla a\|^{2}_{H^{3}}\big\}.\label{v2.21}
\end{align}
Summing \eqref{v2.21} over $|\alpha|\leq3$ and taking $\epsilon=\frac{1}{4}$, and then adding the results into \eqref{v2.20} implies  \eqref{v2.17}.
\end{proof}
\begin{Proposition}\label{vl2.5}
For smooth solutions of the problem \eqref{v1.5}-\eqref{v1.8}, we have
\begin{align}
&\frac{\rm d}{{\rm d}t}\sum_{|\alpha|\leq 3}\int_{\mathbb{R}^{3}}\partial^{\alpha}u\cdot\partial^{\alpha}\nabla \rho \, {\rm d}x + \lambda \|\nabla \rho\|^{2}_{H^{3}}\nonumber\\
&\quad \leq C(\|u-b\|^{2}_{H^{3}}+\|\nabla u\|^{2}_{H^{4}})+C(\|\rho\|_{H^{4}}+\|\rho\|^{8}_{H^{4}}+\|u\|^{2}_{H^{3}})\|\nabla(\rho,u)\|^{2}_{H^{3}}\label{v2.22}
\end{align}
for all $0\leq t<T $. 
\end{Proposition}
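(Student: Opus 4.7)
The plan is to exploit the pressure term in the momentum equation \eqref{v1.7} as the source of density dissipation, in the spirit of \cite{MN} and as used already in Proposition \ref{vl3.5}. For each multi-index $\alpha$ with $|\alpha|\leq 3$, I would apply $\partial^{\alpha}$ to \eqref{v1.7}, take the $L^2_x$ inner product with $\partial^{\alpha}\nabla\rho$, and rewrite the time-derivative pairing as
\begin{equation*}
\int\partial_{t}\partial^{\alpha}u\cdot\partial^{\alpha}\nabla\rho\,{\rm d}x
=\frac{\rm d}{{\rm d}t}\int\partial^{\alpha}u\cdot\partial^{\alpha}\nabla\rho\,{\rm d}x
-\int\partial^{\alpha}u\cdot\partial^{\alpha}\nabla\partial_{t}\rho\,{\rm d}x.
\end{equation*}
After the reorganisation, the pressure term becomes
$\int\partial^{\alpha}\!\bigl(\tfrac{p'(1+\rho)}{1+\rho}\nabla\rho\bigr)\cdot\partial^{\alpha}\nabla\rho\,{\rm d}x$, which splits as
$\int\tfrac{p'(1+\rho)}{1+\rho}|\partial^{\alpha}\nabla\rho|^{2}\,{\rm d}x$ plus a commutator. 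Since the smallness \eqref{v2.2} makes $\tfrac{p'(1+\rho)}{1+\rho}$ uniformly bounded below by some $\lambda>0$, summing over $|\alpha|\leq 3$ produces the desired coercive term $\lambda\|\nabla\rho\|^{2}_{H^{3}}$ on the left-hand side of \eqref{v2.22}.

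Second, I would eliminate $\partial_{t}\rho$ in the auxiliary integral $-\int\partial^{\alpha}u\cdot\partial^{\alpha}\nabla\partial_{t}\rho\,{\rm d}x$ via the continuity equation \eqref{v1.6}, replacing it by $-u\cdot\nabla\rho-(1+\rho)\,\dv u$. One integration by parts transfers the extra $\nabla$ from $\rho_{t}$ onto $\partial^{\alpha}u$, yielding a principal contribution $\int(1+\rho)|\partial^{\alpha}\dv u|^{2}\,{\rm d}x$ (absorbed into $\|\nabla u\|^{2}_{H^{4}}$) and lower-order products of $\nabla\rho$, $\nabla u$, $u$, and their derivatives, all controllable by Lemma \ref{vl2.1} and Young's inequality in the form $C\|u\|^{2}_{H^{3}}\|\nabla(\rho,u)\|^{2}_{H^{3}}$.

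Third, the remaining contributions on the right-hand side come from $\partial^{\alpha}(u\cdot\nabla u)$, $\partial^{\alpha}\!\bigl(\tfrac{1}{1+\rho}\Delta u\bigr)$, $-\partial^{\alpha}(u(1+a))$, $\partial^{\alpha}b$, plus the pressure commutator $\bigl[\partial^{\alpha},\tfrac{p'(1+\rho)}{1+\rho}\bigr]\nabla\rho$. The convective, friction and $b$-source terms are bounded by $C(\|u-b\|^{2}_{H^{3}}+\|u\|^{2}_{H^{3}}\|\nabla(\rho,u)\|^{2}_{H^{3}})$ after applying Cauchy--Schwarz and Lemma \ref{vl2.1}. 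The viscous term must be split as $\tfrac{1}{1+\rho}=1-\tfrac{\rho}{1+\rho}$, so that the leading $\int\partial^{\alpha}\Delta u\cdot\partial^{\alpha}\nabla\rho\,{\rm d}x$ is absorbed into $\|\nabla u\|^{2}_{H^{4}}$ via Young, and the perturbative part gives a factor $C\|\rho\|_{H^{4}}\|\nabla(\rho,u)\|^{2}_{H^{3}}$. The pressure commutator, expanded by Moser/Fa\`a di Bruno on $\tfrac{p'(1+\rho)}{1+\rho}$, produces polynomial factors up to $\|\rho\|^{8}_{H^{4}}$ (this accounts for that specific exponent in \eqref{v2.22}) times $\|\nabla\rho\|^{2}_{H^{3}}$.

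The main technical obstacle is the treatment of the Laplacian term $\int\partial^{\alpha}\!\bigl(\tfrac{1}{1+\rho}\Delta u\bigr)\cdot\partial^{\alpha}\nabla\rho\,{\rm d}x$: a direct bound would demand $\|\Delta u\|_{H^{3}}$, which is not available in $\mathcal{D}_1(t)$, and this forces the appearance of $\|\nabla u\|^{2}_{H^{4}}$ on the right of \eqref{v2.22} rather than the more natural $\|\nabla u\|^{2}_{H^{3}}$. Handling this carefully, together with tracking the polynomial weights generated by repeated differentiation of $\tfrac{p'(1+\rho)}{1+\rho}$ under \eqref{v2.2}, is the delicate part; otherwise the argument is a standard commutator-and-Moser bookkeeping parallel to the proof of Proposition \ref{vl3.5}.
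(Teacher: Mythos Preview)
Your proposal is correct and follows essentially the same route as the paper: apply $\partial^{\alpha}$ to the momentum equation, pair with $\partial^{\alpha}\nabla\rho$, rewrite the $\partial_t u$ pairing via the time-derivative trick and the continuity equation, and extract the coercive $\|\nabla\rho\|_{H^3}^2$ from the pressure term while absorbing the viscous contribution into $\|\nabla u\|_{H^4}^2$. The only cosmetic difference is that the paper splits $\tfrac{p'(1+\rho)}{1+\rho}=p'(1)+\bigl[\tfrac{p'(1+\rho)}{1+\rho}-p'(1)\bigr]$ up front (so the coercive constant is exactly $p'(1)$ rather than a lower bound under \eqref{v2.2}), and it appears to silently drop the $-ua$ contribution you mention---your inclusion of it is in fact more careful, and it is harmless for the downstream combination \eqref{v2.24}.
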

\begin{proof}
 Taking differentiation $\partial^\alpha \,(|\alpha|\leq 3)$ to \eqref{v1.7}, and by carrying an direct calculation, we get
\begin{align}
p'(1)\|\nabla \partial^{\alpha} \rho\|^{2}
\,=&-\int\nabla \partial^{\alpha} \rho \partial^{\alpha}\partial_{t} u\, {\rm d}x  + \int\nabla \partial^{\alpha} \rho \partial^{\alpha}(b-u)\, {\rm d}x \nonumber\\
& +\int\nabla \partial^{\alpha} \rho \partial^{\alpha}\Big\{-u\cdot \nabla u+\frac{1}{1+\rho}\Delta u-\Big[\frac{p'(1+\rho)}{1+\rho}-p'(1)\Big]\nabla \rho\Big\}\, {\rm d}x \nonumber\\
:=\,& Q_{1}+Q_{2}+Q_{3}. \label{v2.23}
\end{align}
For $Q_{i}\, (i=1,2,3)$, applying \eqref{v1.6}, H\"{o}lder's, Sobolev's and Young's inequalities, we have
\begin{align}
Q_{1}=\,&-\frac{\rm d}{{\rm d}t}\int\nabla \partial^{\alpha} \rho \partial^{\alpha}u \, {\rm d}x
+\int \partial^{\alpha}\ \dv u \partial^{\alpha}[(1+\rho)\ \dv u+u\cdot \nabla \rho]\, {\rm d}x \nonumber\\
\quad\leq\,& -\frac{\rm d}{{\rm d}t}\int\nabla \partial^{\alpha} \rho \partial^{\alpha}u \, {\rm d}x
+C\|\partial^{\alpha}\ \dv u\|^{2}+C\|\rho\|_{H^{4}}\|\nabla u\|^{2}_{H^{3}},\nonumber\\
Q_{2}\leq\, &\frac{p'(1)}{4}\|\nabla\partial^{\alpha}\rho\|^{2}+ C\|u-b\|^{2}_{H^{3}},\nonumber\\
Q_{3}\leq\,& \frac{p'(1)}{8}\|\nabla\partial^{\alpha}\rho\|^{2}+C\|u\cdot \nabla u\|^{2}_{H^{3}}
+C\Big\|\frac{1}{1+\rho}\Delta u\Big\|^{2}_{H^{3}}\nonumber\\
& +C\Big\|\Big[\frac{p'(1+\rho)}{1+\rho}-p'(1)\Big]\nabla \rho\Big\|^{2}_{H^{3}}\nonumber\\
\leq\,& \frac{p'(1)}{4}\|\nabla\partial^{\alpha}\rho\|^{2} +C\|u\|^{2}_{H^{3}}\|\nabla u\|^{2}_{H^{3}}
+C\|\rho\|_{H^{3}}\|\nabla\partial^{\alpha}\rho\|^{2}+C\|\nabla u\|^{2}_{H^{4}}\nonumber\\
&+C(\|\rho\|^{2}_{H^{3}}+\|\rho\|^{8}_{H^{3}})\|(\nabla\rho,\nabla u)\|^{2}_{H^{3}}.\nonumber
\end{align}
With the help of \eqref{v2.2}, plugging the above estimates into \eqref{v2.23}, we obtain \eqref{v2.22}.
\end{proof}

 Reorganizing the estimates obtained above in the Propositions \ref{vl2.2}-\ref{vl2.5}, one gets
 \begin{align}
&\frac{\rm d}{{\rm d}t}\mathcal{E}_{1}(t)+\lambda \mathcal{D}_{1}(t)\nonumber\\
\leq\, &C\big\{\|(\rho,u)\|_{H^{4}}+\|\rho\|_{H^{4}}\|u\|_{H^{4}}(1+\|\rho\|^{2}_{H^{4}}+\|\rho\|_{H^{4}}\|u\|_{H^{4}})
+\|\rho\|_{H^{4}}(1+\|\rho\|^{7}_{H^{4}})\big\}\nonumber\\
& \times\bigg\{\|\nabla (a,b,\rho,u)\|^{2}_{H^{3}}+\|b-u\|^{2}_{H^{4}}+\sum_{|\alpha|\leq 4}\|\partial^{\alpha}\{\mathbf{I}-\mathbf{P}\}f\|^{2}_{\nu}\bigg\}.\label{v2.24}
\end{align}

Next, we need to estimate  the mixed space-velocity derivatives of $f$, i.e., $\partial^\alpha_\beta f$.
Since $\|\partial^{\alpha}_{\beta}\mathbf{P}f\|\leq C \|\partial^{\alpha}f\|$ for any $\alpha$ and $\beta$,
 we only need to  estimate $\|\partial^{\alpha}_{\beta}\{\mathbf{I}-\mathbf{P}\}f\|$ below.
Let us apply $\mathbf{I}-\mathbf{P}$ to both sides of \eqref{v1.5} to get
\begin{align}
&\!\!\!\!\!\!\!\!\!\!\!\!\!\!\!\!
\partial_{t}\{\mathbf{I}-\mathbf{P}\}f+v\cdot \nabla_{x}\{\mathbf{I}-\mathbf{P}\}f+u\cdot \nabla_{v}\{\mathbf{I}-\mathbf{P}\}f
-\frac{1}{2}u\cdot v\{\mathbf{I}-\mathbf{P}\}f\nonumber\\
=\,&\mathcal{L}\{\mathbf{I}-\mathbf{P}\}f
+\mathbf{P}\Big\{v\cdot \nabla_{x}\{\mathbf{I}-\mathbf{P}\}f+u\cdot \nabla_{v}\{\mathbf{I}-\mathbf{P}\}f-\frac{1}{2}u\cdot v\{\mathbf{I}-\mathbf{P}\}f\Big\}\nonumber\\
&-\{\mathbf{I}-\mathbf{P}\}\Big\{v\cdot \nabla_{x}\mathbf{P}f+u\cdot\nabla_{v}\mathbf{P}f-\frac{1}{2}u\cdot v\mathbf{P}f\Big\} +\{\mathbf{I}-\mathbf{P}\}G.\label{v2.25}
\end{align}
where $\{\mathbf{I}-\mathbf{P}\}G$ is defined by
\begin{align}
\{\mathbf{I}-\mathbf{P}\}G:=\,&\rho \Big\{\mathcal{L}\{\mathbf{I}-\mathbf{P}\}f+\frac{1}{2}u\cdot v\{\mathbf{I}-\mathbf{P}\}f-u\cdot \nabla_{v}\{\mathbf{I}-\mathbf{P}\}f\nonumber\\
&\quad+\mathbf{P}\Big(u\cdot \nabla_{v}\{\mathbf{I}-\mathbf{P}\}f-\frac{1}{2}u\cdot v\{\mathbf{I}-\mathbf{P}\}f\Big)\nonumber\\
& \quad-\{\mathbf{I}-\mathbf{P}\}\Big(u\cdot\nabla_{v}\mathbf{P}f-\frac{1}{2}u\cdot v\mathbf{P}f\Big)\Big\}.\nonumber
\end{align}
Now we give  a simple  derivation of the equality \eqref{v2.25}.
First,
\begin{align}
\{\mathbf{I}-\mathbf{P}\}(v\cdot \nabla_{x}f)&= v\cdot \nabla_{x}\{\mathbf{I}-\mathbf{P}\}f +v\cdot \mathbf{P}f
-\mathbf{P}(v\cdot \nabla_{x}\{\mathbf{I}-\mathbf{P}\}f)-\mathbf{P}(v\cdot \nabla_{x}\mathbf{P}f)\nonumber\\
&=v\cdot \nabla_{x}\{\mathbf{I}-\mathbf{P}\}f+\{\mathbf{I}-\mathbf{P}\}(v\cdot \nabla_{x}\mathbf{P}f)-\mathbf{P}(v\cdot \nabla_{x}\{\mathbf{I}-\mathbf{P}\}f).\nonumber
\end{align}
We can deal with the terms $\{\mathbf{I}-\mathbf{P}\}(u\cdot \nabla_{v}f),\{\mathbf{I}-\mathbf{P}\}(u\cdot v f)$ in the same way.
Meanwhile, by the definitions of $\mathcal{L}\,\text{and}\,\mathbf{P}$, one has
\begin{align}
&\{\mathbf{I}-\mathbf{P}\}(u\cdot v\sqrt{M})\equiv 0 ,\ \quad
 \{\mathbf{I}-\mathbf{P}\}\mathcal{L}f =\mathcal{L}\{\mathbf{I}-\mathbf{P}\}f.\nonumber
\end{align}
Then  \eqref{v2.25} follows.

In the proof of the following Proposition, we adopt some ideas from \cite[Lemma 4.3]{DFT}.

\begin{Proposition}\label{vl2.6}
Let $1\leq k\leq4$. For smooth solutions of the problem \eqref{v1.5}-\eqref{v1.8}, we have
\begin{align}
&\!\!\!\!\!\!\!\!\!\!\!\!\!\!\!\!\frac{\rm d}{{\rm d}t}\sum_{\substack{|\beta|=k \\ |\alpha|+|\beta|\leq 4}}\|\partial^{\alpha}_{\beta}\{\mathbf{I}-\mathbf{P}\}f\|^{2}
+\lambda \sum_{\substack{|\beta|=k \\ |\alpha|+|\beta|\leq 4}}\|\partial^{\alpha}_{\beta}\{\mathbf{I}-\mathbf{P}\}f\|^{2}_{\nu}\nonumber\\
\leq \,&C(1+\|(\rho,u)\|^{2}_{H^{3}}+\|\rho\|^{2}_{H^{3}}\|u\|^{2}_{H^{3}})
\sum_{|\alpha'|\leq4-k+1}\|\partial^{\alpha'}\{\mathbf{I}-\mathbf{P}\}f\|^{2}_{\nu}\nonumber\\
&+C(\|\rho\|_{H^{3}}+\|u\|^{2}_{H^{3}}+\|\rho\|^{2}_{H^{3}}\|u\|^{2}_{H^{3}})
\sum_{\substack{1\leq |\beta'|\leq 4 \\|\alpha'|+|\beta'|\leq 4}}\|\partial^{\alpha'}_{\beta'}\{\mathbf{I}-\mathbf{P}\}f\|^{2}_{\nu}\nonumber\\
&+C\|\nabla b\|^{2}_{H^{4-k}}+C(1+\|\rho\|^{2}_{H^{3}})\|u\|^{2}_{H^{4-k}}\|\nabla b\|^{2}_{H^{3}}\nonumber\\
&+C\chi_{2\leq k\leq 4}(1+\|\rho\|^{2}_{H^{3}})\sum_{\substack{1\leq |\beta'|\leq k-1\\
|\alpha'|+|\beta'|\leq 4 }}\|\partial^{\alpha'}_{\beta'}\{\mathbf{I}-\mathbf{P}\}f\|^{2}_{\nu} \label{v2.26}
\end{align}
for all $0\leq t<T $. 
Here  $\chi_{E}$ denotes the characteristic function of the  set $E$.
\end{Proposition}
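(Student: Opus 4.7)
The plan is to perform a $\nu$-weighted $L^2_{x,v}$ energy estimate directly on the micro equation \eqref{v2.25} for $\{\mathbf{I}-\mathbf{P}\}f$. Concretely, I apply $\partial^\alpha_\beta$ with $|\beta|=k$ and $|\alpha|+|\beta|\leq 4$ to \eqref{v2.25}, take the $L^2_{x,v}$ inner product with $\partial^\alpha_\beta\{\mathbf{I}-\mathbf{P}\}f$, and sum over all admissible multi-indices. The leading dissipative contribution comes from $\int\langle -\mathcal{L}\partial^\alpha_\beta\{\mathbf{I}-\mathbf{P}\}f,\partial^\alpha_\beta\{\mathbf{I}-\mathbf{P}\}f\rangle\,\mathrm{d}x$; because $\mathcal{L}$ is a second-order operator in $v$, commuting $\partial_\beta$ through $\mathcal{L}$ yields the desired positive term $\lambda\,\|\partial^\alpha_\beta\{\mathbf{I}-\mathbf{P}\}f\|_\nu^2$ (via the $\partial^\alpha_\beta$-variant of the coercivity \eqref{v2.1}) modulo harmless corrections involving strictly lower-order velocity derivatives, which are absorbed by the last term of \eqref{v2.26} carrying the indicator $\chi_{2\leq k\leq 4}$.

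The decisive structural point is the commutator of $\partial^\alpha_\beta$ with the free-streaming operator $v\cdot\nabla_x$: each $\partial_{v_i}$ that lands on the factor $v$ converts one velocity derivative into one spatial derivative, producing terms of the form $\partial^{\alpha+e_i}_{\beta-e_i}\{\mathbf{I}-\mathbf{P}\}f$ with $|\beta-e_i|=k-1$. This is precisely why the inequality is organized by induction on $k$, with the $1\leq|\beta'|\leq k-1$ sum on the right. The analogous commutators $[\partial^\alpha_\beta,u\cdot\nabla_v]$ and $[\partial^\alpha_\beta,\frac12 u\cdot v]$ merely generate factors of derivatives of $u$, which are bounded in $L^\infty$ via the Sobolev embedding $H^2(\Omega)\hookrightarrow L^\infty(\Omega)$ of Lemma \ref{vl2.1}, and these supply the factor $(1+\|\rho\|_{H^3}^2)\|u\|_{H^{4-k}}^2$ appearing in front of $\|\nabla b\|_{H^3}^2$.

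The source terms on the right-hand side of \eqref{v2.25} split into three families: (i) the linear coupling $\{\mathbf{I}-\mathbf{P}\}(v\cdot\nabla_x\mathbf{P}f+u\cdot\nabla_v\mathbf{P}f-\frac12 u\cdot v\,\mathbf{P}f)$, which by the explicit formula $\mathbf{P}f=a\sqrt{M}+b\cdot v\sqrt{M}$ and the orthogonality $\{\mathbf{I}-\mathbf{P}\}(a\sqrt{M})=0$ yields bounds by $\|\nabla b\|_{H^{4-k}}^2$; (ii) the projection terms $\mathbf{P}(v\cdot\nabla_x\{\mathbf{I}-\mathbf{P}\}f+\cdots)$, which are controlled by $\sum_{|\alpha'|\leq 4-k+1}\|\partial^{\alpha'}\{\mathbf{I}-\mathbf{P}\}f\|_\nu^2$ because $\mathbf{P}$ integrates against the Gaussian $\sqrt{M}$ and hence kills the polynomial weight in $v$; and (iii) the fully nonlinear remainder $\{\mathbf{I}-\mathbf{P}\}G$, carrying a factor $\rho$ or $\rho u$, handled by the Leibniz rule together with $L^\infty$-bounds on low-order factors, producing the mixed coefficients $\|\rho\|_{H^3}$, $\|u\|_{H^3}^2$, and $\|\rho\|_{H^3}^2\|u\|_{H^3}^2$ in front of the higher-order velocity-derivative sums.

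The principal obstacle is the simultaneous bookkeeping of three weights: the polynomial factor $\nu(v)=1+|v|^2$ generated every time $\partial_\beta$ crosses $\mathcal{L}$ or the weight term $\frac12 u\cdot v\{\mathbf{I}-\mathbf{P}\}f$; the exchange of $x$- for $v$-derivatives via the transport commutator $[v\cdot\nabla_x,\partial_\beta]$, which forces the induction on $k$ and is the reason the estimate must accept a defect in $|\beta'|\leq k-1$; and the nonlinear density coupling appearing in $\rho\,\mathcal{L}\{\mathbf{I}-\mathbf{P}\}f$ and $\rho u\cdot\nabla_v\{\mathbf{I}-\mathbf{P}\}f$, which must be kept essentially bilinear so that the a priori smallness \eqref{v2.2} absorbs it into the nonlinear coefficient. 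Once these are balanced carefully via Young's inequality and summation over $|\beta|=k$, $|\alpha|+|\beta|\leq 4$, the desired differential inequality \eqref{v2.26} follows.
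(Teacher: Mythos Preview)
Your proposal is correct and follows essentially the same route as the paper: apply $\partial^\alpha_\beta$ to the micro equation \eqref{v2.25}, test against $\partial^\alpha_\beta\{\mathbf{I}-\mathbf{P}\}f$, use the commutator identity $[\partial^\beta_v,\mathcal{L}]=[\partial^\beta_v,-|v|^2]$ together with the transport commutator $[\partial^\beta_v,v\cdot\nabla_x]$ to organize the induction on $k$, and estimate the three families of source terms exactly as you describe. Two minor remarks: the test function is unweighted (the $\nu$-weight enters only through the coercivity of $-\mathcal{L}$, after which one subtracts $\|\mathbf{P}_0\partial^\alpha_\beta\{\mathbf{I}-\mathbf{P}\}f\|_\nu^2\leq C\|\partial^\alpha\{\mathbf{I}-\mathbf{P}\}f\|^2$); and the paper moves the undifferentiated piece $\rho\langle\mathcal{L}\partial^\alpha_\beta\{\mathbf{I}-\mathbf{P}\}f,\partial^\alpha_\beta\{\mathbf{I}-\mathbf{P}\}f\rangle$ to the left so that $(1+\rho)\langle-\mathcal{L}\cdot,\cdot\rangle$ is directly coercive, rather than absorbing it by smallness---your variant also works and yields the same coefficient $\|\rho\|_{H^3}$.
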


\begin{proof}
Fix $k\,(1\leq k\leq 4)$.  Choosing $\alpha$ and $\beta$ such that  $|\beta|=k$ and $|\alpha|+|\beta|\leq 4$,
  multiplying \eqref{v2.25} by $\partial^{\alpha}_{\beta}\{\mathbf{I}-\mathbf{P}\}f$ and then taking integration, one has
\begin{align}
\frac{1}{2}\frac{\rm d}{{\rm d}t}\|\partial^{\alpha}_{\beta}\{\mathbf{I}-\mathbf{P}\}f\|^{2}+\int\langle -L\partial^{\alpha}_{\beta}
\{\mathbf{I}-\mathbf{P}\}f,\{\mathbf{I}-\mathbf{P}\}f\rangle   \, {\rm d}x :=\sum_{i=1}^{7}J_{i}\label{v2.27}
\end{align}
with
\begin{align}
&J_{1}=\int\langle -\partial^{\alpha}_{x}[\partial^{\beta}_{v},v\cdot \nabla_{x}]\{\mathbf{I}-\mathbf{P}\}f,\partial^{\alpha}_{\beta}\{\mathbf{I}-\mathbf{P}\}f\rangle   \, {\rm d}x ,
\nonumber\\
&J_{2}=\int\langle \partial^{\alpha}_{x}[\partial^{\beta}_{v},-|v|^{2}]
\{\mathbf{I}-\mathbf{P}\}f,\partial^{\alpha}_{\beta}\{\mathbf{I}-\mathbf{P}\}f\rangle   \, {\rm d}x ,\nonumber\\
&J_{3}=\int\langle -\partial^{\alpha}_{\beta}(u\cdot \nabla_{v}\{\mathbf{I}-\mathbf{P}\}f),\partial^{\alpha}_{\beta}\{\mathbf{I}-\mathbf{P}\}f\rangle   \, {\rm d}x ,\nonumber\\
&J_{4}=\int\langle \frac{1}{2}\partial^{\alpha}_{\beta}(u\cdot v\{\mathbf{I}-\mathbf{P}\}f),\partial^{\alpha}_{\beta}\{\mathbf{I}-\mathbf{P}\}f\rangle   \, {\rm d}x ,\nonumber\\
&J_{5}=\int\Big\langle \partial^{\alpha}_{\beta}\mathbf{P}\Big(v\cdot \nabla_{x}\{\mathbf{I}-\mathbf{P}\}f+u\cdot \nabla_{v}\{\mathbf{I}-\mathbf{P}\}f\nonumber\\
&\qquad \qquad\qquad  -\frac{1}{2}u\cdot v\{\mathbf{I}-\mathbf{P}\}f\Big),\partial^{\alpha}_{\beta}\{\mathbf{I}-\mathbf{P}\}f\Big\rangle   \, {\rm d}x ,\nonumber\\
&J_{6}=\int\Big\langle -\partial^{\alpha}_{\beta}\{\mathbf{I}-\mathbf{P}\}\Big(v\cdot \nabla_{x}\mathbf{P}f+u\cdot\nabla_{v}\mathbf{P}f-\frac{1}{2}u\cdot v\mathbf{P}f\Big),
\partial^{\alpha}_{\beta}\{\mathbf{I}-\mathbf{P}\}f\Big\rangle   \, {\rm d}x ,\nonumber\\
&J_{7}=\int\langle \partial^{\alpha}_{\beta}\{\mathbf{I}-\mathbf{P}\}G,\partial^{\alpha}_{\beta}\{\mathbf{I}-\mathbf{P}\}f\rangle   \, {\rm d}x .\nonumber
\end{align}
Here the fact that $[\partial^{\beta}_{v},\mathcal{L}]=[\partial^{\beta}_{v},-|v|^{2}]$ has been used.

Now  we start estimating each term $J_{i}$ in \eqref{v2.27}. For the  terms $J_i\,(i=1,\cdots, 6)$, we have
\begin{align}
J_{1}&\leq \eta \|\partial^{\alpha}_{\beta}\{\mathbf{I}-\mathbf{P}\}f\|^{2}
+C_{\eta}\|[\partial^{\beta}_{v},v\cdot\nabla_{v}]\partial^{\alpha}_{x}\{\mathbf{I}-\mathbf{P}\}f\|^{2}\nonumber\\
&\leq \eta \|\partial^{\alpha}_{\beta}\{\mathbf{I}-\mathbf{P}\}f\|^{2}
+C_{\eta}\sum_{|\alpha'|\leq 4-k}\|\partial^{\alpha'}\nabla_{x}\{\mathbf{I}-\mathbf{P}\}f\|^{2}\nonumber\\
&\quad +\chi_{2\leq k\leq4} C_{\eta}\sum_{\substack{1\leq |\beta'|\leq k-1 \\ |\alpha'|+|\beta'|\leq 4}}\|\partial^{\alpha'}_{\beta'}\{\mathbf{I}-\mathbf{P}\}f\|^{2},\nonumber\\
J_{2}&\leq \eta \|\partial^{\alpha}_{\beta}\{\mathbf{I}-\mathbf{P}\}f\|^{2}
+C_{\eta}\|[\partial^{\beta}_{v},-|v|^{2}]\partial^{\alpha}_{x}\{\mathbf{I}-\mathbf{P}\}f\|^{2}\nonumber\\
&\leq \eta \|\partial^{\alpha}_{\beta}\{\mathbf{I}-\mathbf{P}\}f\|^{2}
+C_{\eta}\sum_{|\alpha'|\leq 4-k}\|\partial^{\alpha'}\{\mathbf{I}-\mathbf{P}\}f\|^{2}_{\nu}\nonumber\\
&\quad  +\chi_{2\leq k\leq4} C_{\eta}\sum_{\substack{1\leq |\beta'|\leq k-1 \\ |\alpha'|+|\beta'|\leq 4}}\|\partial^{\alpha'}_{\beta'}\{\mathbf{I}-\mathbf{P}\}f\|^{2}_{\nu},\nonumber\\
J_{3}&\leq \eta \|\partial^{\alpha}_{\beta}\{\mathbf{I}-\mathbf{P}\}f\|^{2}
+C_{\eta}\|\partial^{\alpha}_{x}(u\cdot\nabla_{v}\partial^{\beta}_{v}\{\mathbf{I}-\mathbf{P}\}f)\|^{2}\nonumber\\
&\leq \eta \|\partial^{\alpha}_{\beta}\{\mathbf{I}-\mathbf{P}\}f\|^{2}
+C_{\eta}\|u\|^{2}_{H^{3}}\sum_{\substack{1\leq |\beta'|\leq 4 \\  |\alpha'|+|\beta'|\leq 4 }}\|\partial^{\alpha'}_{\beta'}\{\mathbf{I}-\mathbf{P}\}f\|^{2},\nonumber\\
J_{4}&\leq \eta \|\partial^{\alpha}_{\beta}\{\mathbf{I}-\mathbf{P}\}f\|^{2}
+C_{\eta}\|\partial^{\alpha}_{x}\big(u\cdot\partial^{\beta}_{v}(v\{\mathbf{I}-\mathbf{P}\}f)\big)\|^{2}\nonumber\\
&\leq \eta \|\partial^{\alpha}_{\beta}\{\mathbf{I}-\mathbf{P}\}f\|^{2}
+C_{\eta}\|u\|^{2}_{H^{3}}\sum_{\substack{1\leq |\beta'|\leq 4 \\  |\alpha'|+|\beta'|\leq 4 }}\|\partial^{\alpha'}_{\beta'}\{\mathbf{I}-\mathbf{P}\}f\|^{2}_{\nu}\nonumber\\
&\quad +C_{\eta}\|u\|^{2}_{H^{3}}\sum_{  |\alpha'|\leq 4-k }\|\partial^{\alpha'}\{\mathbf{I}-\mathbf{P}\}f\|^{2},\nonumber\\
J_{5}&\leq \eta \|\partial^{\alpha}_{\beta}\{\mathbf{I}-\mathbf{P}\}f\|^{2}
+C_{\eta}\|\partial^{\alpha}_{\beta}\mathbf{P}(v\cdot \nabla_{x}\{\mathbf{I}-\mathbf{P}\}f)\|^{2}\nonumber\\
&\quad +C_{\eta}\|\partial^{\alpha}_{\beta}\mathbf{P}(u\cdot \nabla_{v}\{\mathbf{I}-\mathbf{P}\}f)\|^{2}
+C_{\eta}\|\partial^{\alpha}_{\beta}\mathbf{P}(u\cdot v\{\mathbf{I}-\mathbf{P}\}f)\|^{2}\nonumber\\
&\leq \eta \|\partial^{\alpha}_{\beta}\{\mathbf{I}-\mathbf{P}\}f\|^{2}
+C_{\eta}\sum_{|\alpha'|\leq 4-k}\|\nabla_{x}\partial^{\alpha'}_{x}\{\mathbf{I}-\mathbf{P}\}f\|^{2}\nonumber\\
&\quad +C_{\eta}\|u\|^{2}_{H^{3}}\sum_{|\alpha'|\leq 4-k}\|\partial^{\alpha'}_{x}\{\mathbf{I}-\mathbf{P}\}f\|^{2},\nonumber\\
J_{6}&\leq \eta \|\partial^{\alpha}_{\beta}\{\mathbf{I}-\mathbf{P}\}f\|^{2}
+C_{\eta}\|\partial^{\alpha}_{\beta}\{\mathbf{I}-\mathbf{P}\}(v\cdot \nabla_{x}\mathbf{P}f)\|^{2}\nonumber\\
&\quad +C_{\eta}\|\partial^{\alpha}_{\beta}\{\mathbf{I}-\mathbf{P}\}(u\cdot \nabla_{v}\mathbf{P}f)\|^{2}
+C_{\eta}\|\partial^{\alpha}_{\beta}\{\mathbf{I}-\mathbf{P}\}(u\cdot v \mathbf{P}f)\|^{2}\nonumber\\
&\leq \eta \|\partial^{\alpha}_{\beta}\{\mathbf{I}-\mathbf{P}\}f\|^{2}
+C_{\eta}\big(\|\nabla b\|^{2}_{H^{4-k}}+\|\nabla b\|^{2}_{H^{2}}\|u\|^{2}_{H^{4-k}}\big).\nonumber
\end{align}

For the term $J_{7}$, we have the following calculation and estimates:
\begin{align}
J_{7}=\,&\int \langle \partial^{\alpha}_{\beta}\big(\rho L\{\mathbf{I}-\mathbf{P}\}f\big),\partial^{\alpha}_{\beta}\{\mathbf{I}-\mathbf{P}\}f\rangle \, {\rm d}x  \nonumber\\
&+\frac{1}{2}\int\langle\partial^{\alpha}_{\beta}\big(\rho u\cdot v\{\mathbf{I}-\mathbf{P}\}f\big),\partial^{\alpha}_{\beta}\{\mathbf{I}-\mathbf{P}\}f\rangle \, {\rm d}x\nonumber\\
&-\int\langle\partial^{\alpha}_{\beta}\big(\rho u\cdot \nabla_{v}\{\mathbf{I}-\mathbf{P}\}f\big),\partial^{\alpha}_{\beta}\{\mathbf{I}-\mathbf{P}\}f\rangle \, {\rm d}x\nonumber\\
&+\int\Big\langle\partial^{\alpha}_{\beta}\mathbf{P}\Big(\rho u\cdot \nabla_{v}\{\mathbf{I}-\mathbf{P}\}f-\frac{1}{2}\rho u\cdot v\{\mathbf{I}-\mathbf{P}\}f\Big),
\partial^{\alpha}_{\beta}\{\mathbf{I}-\mathbf{P}\}f\Big\rangle \, {\rm d}x\nonumber\\
&-\int\Big\langle\partial^{\alpha}_{\beta}\{\mathbf{I}-\mathbf{P}\}\Big(\rho u\cdot \nabla_{v}\mathbf{P}f-\frac{1}{2}\rho u\cdot v\mathbf{P}f\Big),
\partial^{\alpha}_{\beta}\{\mathbf{I}-\mathbf{P}\}f\Big\rangle \, {\rm d}x\nonumber\\
:=\,& \sum_{i=1}^{5}Y_{i}.\nonumber
\end{align}
We can adopt the above similar estimates to deal with  $Y_{i}\,(2\leq i\leq5)$. Thus, we only give the following bounds:\begin{align}
Y_{2}&\leq \eta\|\partial^{\alpha}_{\beta}\{\mathbf{I}-\mathbf{P}\}f\|^{2}
\nonumber\\
& \quad  +C_{\eta}\|\rho\|^{2}_{H^{3}}\|u\|^{2}_{H^{3}}
\bigg\{\sum_{|\alpha'|\leq 4-k}\|\partial^{\alpha'}\{\mathbf{I}-\mathbf{P}\}f\|^{2}
+\sum_{\substack{1\leq |\beta'|\leq 4 \\ |\alpha'|+|\beta'|\leq 4}}\|\partial^{\alpha'}_{\beta'}\{\mathbf{I}-\mathbf{P}\}f\|^{2}_{\nu}\bigg\},\nonumber\\
Y_{3}&\leq \eta\|\partial^{\alpha}_{\beta}\{\mathbf{I}-\mathbf{P}\}f\|^{2}
+C_{\eta}\|\rho\|^{2}_{H^{3}}\|u\|^{2}_{H^{3}}\sum_{\substack{1\leq |\beta'|\leq 4 \\ |\alpha'|+|\beta'|\leq 4}}\|\partial^{\alpha'}_{\beta'}\{\mathbf{I}-\mathbf{P}\}f\|^{2},\nonumber\\
Y_{4}&\leq \eta\|\partial^{\alpha}_{\beta}\{\mathbf{I}-\mathbf{P}\}f\|^{2}
+C_{\eta}\|\rho\|^{2}_{H^{3}}\|u\|^{2}_{H^{3}}\sum_{|\alpha'|\leq 4-k}\|\partial^{\alpha'}\{\mathbf{I}-\mathbf{P}\}f\|^{2},\nonumber\\
Y_{5}&\leq \eta\|\partial^{\alpha}_{\beta}\{\mathbf{I}-\mathbf{P}\}f\|^{2}
+C_{\eta}\|\rho\|^{2}_{H^{3}}\|u\|^{2}_{H^{4-k}}\|\nabla _{x}b\|^{2}_{H^{2}}.\nonumber
\end{align}
For $Y_{1}$, we give a detailed calculation:
\begin{align}
Y_{1}
=\,&\iint\partial_{x}^{\alpha}\big(\rho L\partial_{v}^{\beta}\{\mathbf{I}-\mathbf{P}\}f\big)\partial^{\alpha}_{\beta}\{\mathbf{I}-\mathbf{P}\}f\, {\rm d}x  {\rm d}v \nonumber\\
 &+\iint\partial_{x}^{\alpha}\big(\rho[\partial_{v}^{\beta},-|v|^{2}]\{\mathbf{I}-\mathbf{P}\}f\big)\partial^{\alpha}_{\beta}
\{\mathbf{I}-\mathbf{P}\}f\, {\rm d}x  {\rm d}v,\nonumber\\
=\,&\iint\rho L\partial^{\alpha}_{\beta}\{\mathbf{I}-\mathbf{P}\}f\partial^{\alpha}_{\beta}\{\mathbf{I}-\mathbf{P}\}f\, {\rm d}x  {\rm d}v \nonumber\\
&+\sum_{1\leq \gamma \leq \alpha}C_{\alpha,\gamma}
\iint \partial^{\gamma}\rho L\partial^{\alpha-\gamma}_{\beta}\{\mathbf{I}-\mathbf{P}\}f\partial^{\alpha}_{\beta}\{\mathbf{I}-\mathbf{P}\}f\, {\rm d}x  {\rm d}v\nonumber\\
&+\iint\partial_{x}^{\alpha}\big(\rho[\partial_{v}^{\beta},-|v|^{2}]\{\mathbf{I}-\mathbf{P}\}f\big)\partial^{\alpha}_{\beta}
\{\mathbf{I}-\mathbf{P}\}f\, {\rm d}x  {\rm d}v,\nonumber\\
:=\,& Y_{11}+Y_{12}+Y_{13}.\nonumber
\end{align}
For $Y_{11}$, we can move it to the left hand side of the equality \eqref{v2.27}. Thus, we only need to  deal with $Y_{12}$ and $Y_{13}$.  We have
\begin{align}
Y_{12}&=-\iint\partial^{\gamma}\rho \sqrt{M}\nabla_{v}\big(M^{-\frac{1}{2}}
\partial^{\alpha-\gamma}_{\beta}\{\mathbf{I}-\mathbf{P}\}f\big)\sqrt{M}\nabla_{v}
\big(\partial^{\alpha}_{\beta}\{\mathbf{I}-\mathbf{P}\}f\big)\, {\rm d}x  {\rm d}v\nonumber\\
&\leq C \|\partial^{\alpha}_{\beta}\{\mathbf{I}-\mathbf{P}\}f\|_{\nu}\|\rho\|_{H^{3}}
\sum_{ \substack{|\beta'|=k \\ |\alpha'|+|\beta'|\leq 4}}\|\partial^{\alpha'}_{\beta'}\{\mathbf{I}-\mathbf{P}\}f\|_{\nu}\nonumber\\
&\leq C\|\rho\|_{H^{3}}
\sum_{\substack{ |\beta'|=k \\ |\alpha'|+|\beta'|\leq 4}}\|\partial^{\alpha'}_{\beta'}\{\mathbf{I}-\mathbf{P}\}f\|^{2}_{\nu},\nonumber\\
Y_{13}&\leq \eta \|\partial^{\alpha}_{\beta}\{\mathbf{I}-\mathbf{P}\}f\|^{2}
+C_{\eta}\|[\partial^{\beta}_{v},-|v|^{2}]\partial^{\alpha}_{x}\rho\{\mathbf{I}-\mathbf{P}\}f\|^{2}\nonumber\\
&\leq \eta \|\partial^{\alpha}_{\beta}\{\mathbf{I}-\mathbf{P}\}f\|^{2}
+C_{\eta}\|\rho\|^{2}_{H^{3}}\Big\{\sum_{|\alpha'|\leq 4-k}\|\partial^{\alpha'}\{\mathbf{I}-\mathbf{P}\}f\|^{2}_{\nu}\nonumber\\
&\quad +\chi_{2\leq k\leq4}  \sum_{\substack{1\leq |\beta'|\leq k-1 \\ |\alpha'|+|\beta'|\leq 4}}\|\partial^{\alpha'}_{\beta'}\{\mathbf{I}-\mathbf{P}\}f\|^{2}_{\nu}\Big\}.\nonumber
\end{align}
For the second term in the left hand side of the equality \eqref{v2.27}, one gets
\begin{align}
&\quad \int(1+\rho)\langle -L\partial^{\alpha}_{\beta}\{\mathbf{I}-\mathbf{P}\}f, \partial^{\alpha}_{\beta}\{\mathbf{I}-\mathbf{P}\}f\rangle   \, {\rm d}x\nonumber\\
&\geq\lambda_{1}\|\{\mathbf{I}-\mathbf{P}_{0}\}f\|^{2}_{\nu}\nonumber\\
&\geq\frac{\lambda_{1}}{2}\|\partial^{\alpha}_{\beta}\{\mathbf{I}-\mathbf{P}\}f\|^{2}_{\nu}
-\lambda_{1}\|\mathbf{P}_{0}\partial^{\alpha}_{\beta}\{\mathbf{I}-\mathbf{P}\}f\|^{2}_{\nu}\nonumber\\
&\geq\frac{\lambda_{1}}{2}\|\partial^{\alpha}_{\beta}\{\mathbf{I}-\mathbf{P}\}f\|^{2}_{\nu}
-C\|\partial^{\alpha}\{\mathbf{I}-\mathbf{P}\}f\|^{2}.\nonumber
\end{align}
Plugging all the above estimates into \eqref{v2.27} and choosing $\eta$ sufficiently small, we obtain \eqref{v2.26}.
\end{proof}
\begin{Remark}\label{vr2.1}
According to the above Lemma, we can choose some suitable constants $C_{k}$, such that
\begin{align}
&\frac{\rm d}{{\rm d}t}\sum_{1\leq k\leq4}C_{k}\sum_{\substack{|\beta|=k \\ |\alpha|+|\beta|\leq 4}}\|\partial^{\alpha}_{\beta}\{\mathbf{I}-\mathbf{P}\}f\|^{2}
+\lambda\sum_{\substack{1\leq |\beta|\leq 4 \\ |\alpha|+|\beta|\leq 4}}\|\partial^{\alpha}_{\beta}\{\mathbf{I}-\mathbf{P}\}f\|^{2}_{\nu}\nonumber\\
\leq\, & C(\|(\rho,u)\|^{2}_{H^{3}}+\|\rho\|^{2}_{H^{3}}\|u\|^{2}_{H^{3}})\sum_{|\alpha|\leq 4}\|\partial^{\alpha}\{\mathbf{I}-\mathbf{P}\}f\|^{2}_{\nu}\nonumber\\
&+C(\|\rho\|_{H^{3}}+\|u\|^{2}_{H^{3}}+\|\rho\|^{2}_{H^{3}}\|u\|^{2}_{H^{3}})
\sum_{\substack{1\leq||\beta|\leq 4  \\ \alpha|+|\beta|\leq 4}}\|\partial^{\alpha}_{\beta}\{\mathbf{I}-\mathbf{P}\}f\|^{2}_{\nu}\nonumber\\
&+C\|u\|^{2}_{H^{3}}(1+\|\rho\|^{2}_{H^{3}})\|\nabla b\|^{2}_{H^{3}}
+C\Big(\|\nabla b\|^{2}_{H^{3}}+\sum_{|\alpha|\leq 4}\|\partial^{\alpha}\{\mathbf{I}-\mathbf{P}\}f\|^{2}_{\nu}\Big).\label{v2.28}
\end{align}
\end{Remark}

With the aid of the inequalities \eqref{v2.24}, \eqref{v2.28} and \eqref{v2.2}, we have
\begin{align}
\frac{\rm d}{{\rm d}t}\mathcal{E}(t)+\lambda \mathcal{D}(t)\leq C\big(\mathcal{E}^{\frac{1}{2}}(t)+\mathcal{E}^{2}(t)\big)\mathcal{D}(t)
\leq C(\delta+\delta^{2}) \mathcal{D}(t). \label{v2.29}
\end{align}
So, as long as $0<\delta < 1$ is sufficiently small, the  integration in time of \eqref{v2.29} yields
\begin{align}
\mathcal{E}(t)+\lambda \int^{t}_{0}\mathcal{D}(s){\rm d}s\leq \mathcal{E}(0) \label{v2.30}
\end{align}
for all $0\leq t<T $.  Besides, \eqref{v2.2} can be justified by choosing
\begin{align}
\mathcal{E}(0) \sim \|f_{0}\|^{2}_{H^{4}_{x,v}}+\|(\rho_{0},u_{0})\|^{2}_{H^{4}}\nonumber
\end{align}
sufficiently small.

\subsection{The case of periodic domain}

In this subsection we deal with the uniform a priori  estimate
when $\Omega $ is a spatial periodic domain $\mathbb{T}^{3}$.
Using the following conservation laws in the case of torus,
\begin{align}
&\frac{\rm d}{{\rm d}t}\iint  F \, {\rm d}x  {\rm d}v=0, \quad
\frac{\rm d}{{\rm d}t}\int  n \, {\rm d}x =0,\nonumber\\
&\frac{\rm d}{{\rm d}t}\bigg\{\int   nu\, {\rm d}x  +\iint  v F\, {\rm d}x  {\rm d}v\bigg\}=0,\nonumber
\end{align}
and by the assumptions of Theorem \ref{vt1.2}, it follows that
\begin{align}
 \int a\, {\rm d}x =0, \quad  \int \rho \, {\rm d}x =0, \quad \int \big(b+(1+\rho)u\big)\, {\rm d}x =0 \label{v4.1}
 \end{align}
for all $t\geq 0.$

Thanks to the Poincar\'{e} inequality and the conservation laws \eqref{v4.1}, we have
\begin{align}
\|a\|_{L^{2}}&\leq C\|\nabla a\|_{L^{2}}, \quad \|\rho\|_{L^{2}}\leq C\|\nabla \rho\|_{L^{2}}, \label{v4.2}\\
\|u+b\|_{L^{2}}&\leq \|b+u+\rho u\|_{L^{2}}+\|\rho u\|_{L^{2}}\nonumber\\
&\leq C \|\nabla (b+u+\rho u)\|_{L^{2}}+ \|u\|_{L^{\infty}}\|\rho\|_{L^{2}}\nonumber\\
&\leq C \|\nabla (b,u)\|_{L^{2}}+ C\|u\|_{H^{2}}\|\nabla \rho\|_{L^{2}}+C\|\rho\|_{H^{2}}\|\nabla u\|_{L^{2}}.\label{v4.3}
\end{align}
Similarly to the whole space  case, we  can    obtain the  following estimates:
\begin{align}
&\frac{\rm d}{{\rm d}t}\mathcal{E}_{1}(t)+\lambda\mathcal{D}_{1}(t) \leq C(\mathcal{E}^{\frac{1}{2}}_{1}+\mathcal{E}^{2}_{1})\mathcal{D}_{1}(t),\label{v4.4}\\
&\frac{\rm d}{{\rm d}t}\mathcal{E}_{2}(t)+\lambda\mathcal{D}_{2}(t) \leq  C\mathcal{D}_{1}(t)
+C(\mathcal{E}_{1}+\mathcal{E}^{2}_{1})\mathcal{D}_{1}(t)
+C(\mathcal{E}^{\frac{1}{2}}_{1}+\mathcal{E}^{2}_{1})\mathcal{D}_{2}(t).\label{v4.5}
\end{align}
According to the definition of $\mathcal{D}_{\mathbb{T},1}(t)$, we have
\begin{align}
\mathcal{D}_{\mathbb{T},1}(t)\sim \sum_{|\alpha|\leq 4}\|\{\mathbf{I}-\mathbf{P}\}\partial^{\alpha}f\|^{2}_{\nu}+\|(a,b,\rho,u)\|^{2}_{H^{4}}.\label{v4.6}
\end{align}
Combining \eqref{v4.2},\,\eqref{v4.3} and \eqref{v4.4} together, we conclude that
\begin{align}
\frac{\rm d}{{\rm d}t}\mathcal{E}_{1}(t)+\lambda\mathcal{D}_{\mathbb{T},1}(t) \leq C(\mathcal{E}^{\frac{1}{2}}_{1}+\mathcal{E}^{2}_{1})\mathcal{D}_{\mathbb{T},1}(t).\label{v4.7}
\end{align}

\medskip\medskip
{\bf Acknowledgements:}
The authors are very grateful to the referees for their constructive
comments and valuable suggestions, which considerably improved the earlier
version of this paper. Part of this work was  done when the second author  visited the Department of Mathematics,
University of Pittsburgh. She thanks the department for its hospitality. F.  Li was supported in part by NSFC (Grant Nos. 11271184, 11671193), PAPD and China Scholarship Council.
Y. Mu was supported by China Scholarship Council and the Doctoral Starting up Foundation of Nanjing University of Finance \& Economics (MYMXW16001).
D. Wang's research was supported in part by the National Science Foundation under grants DMS-1312800 and DMS-1613213.

%

 \medskip

\end{document}